\def\thalf{\text{\small$\frac{1}{2}$}}
\def\bequation{\begin{equation}}
\def\eequation{\end{equation}}
\def\baligned{\begin{aligned}}
\def\ealigned{\end{aligned}}
\def\argmin{\mathop{\rm arg\,min}}
\def\N{\mathcal{N}}
\def\D{\mathcal{D}}
\def\Gcal{\mathcal{G}}
\renewcommand{\Re}{\mathbb{R}}
\newcommand{\sd}{\partial}
\newcommand{\hsd}{\partial^\infty}
\newcommand{\csd}{\bar\partial}
\newcommand{\rsd}{\hat\partial}
\newcommand{\hcsd}{{\csd^\infty}}
\newcommand{\proj}[1]{\mathrm{proj}_{#1}\,}
\newcommand{\ncone}[2]{N_{#2}\left(#1\right)}
\newcommand{\bV}{{\bar V}}
\def\bdel{{\bar \del}}
\def\Del{{\Delta}}
\def\bv{{\bar v}}
\def\bg{{\bar g}}
\def\bt{{\bar t}}
\def\hht{{\hat t}}
\def\teps{{\tilde\eps}}
\def\tv{{\tilde v}}
\def\goesto{\rightarrow}
\def\epsopt{\eps_{opt}}
\def\nuopt{\nu_{opt}}
\def\redeps{\theta_\eps}
\def\rednu{\theta_\nu}
\newtheorem{thm}{Theorem}
\newtheorem{cor}{Corollary}
\newtheorem{lem}{Lemma}
\newtheorem{prop}{Proposition}
\newtheorem{defn}{Definition}
\newtheorem{rem}{Remark}
\newcommand{\epi}{\mathrm{epi}}
\newcommand{\graph}{\mathrm{graph}}
\newcommand{\norm}[1]{\left\Vert #1\right\Vert}
\newcommand{\dnorm}[1]{\left\Vert #1\right\Vert_*}
\newcommand{\set}[2]{\left\{#1\,\left|\, #2\right.\right\}}
\newcommand{\map}[3]{#1:\, #2\rightarrow #3}
\newcommand{\mmap}[3]{#1:\, #2\Rightarrow #3}
\newcommand{\dom}[1]{\mathrm{dom}\left(#1\right)}
\newcommand{\cl}{\mathrm{cl}\,}
\newcommand{\relin}{\mathrm{ri}\,}
\newcommand{\barrier}{\mathrm{bar}\,}
\newcommand{\uball}{I\!\!B}
\newcommand{\grad}{\nabla}
\newcommand{\conv}{{\mathrm{conv}\,}}
\newcommand{\dist}[2]{\mbox{dist}\left(#1\,\left|\, #2\right.\right)}
\newcommand{\cnc}[2]{{\bar{N}_{#2}(#1)}}
\newcommand{\gam}{\gamma}
\newcommand{\lam}{\lambda}
\newcommand{\eps}{\epsilon}
\newcommand{\sig}{\sigma}
\newcommand{\alf}{\alpha}
\newcommand{\del}{\delta}
\newcommand{\bR}{\mathbb{R}}
\newcommand{\bN}{\mathbb{N}}
\newcommand{\bB}{\mathbb{B}}
\newcommand{\bX}{\mathbb{X}}
\newcommand{\bY}{\mathbb{Y}}
\newcommand{\cQ}{\mathcal{Q}}
\newcommand{\cN}{\mathcal{N}}
\newcommand{\cV}{\mathcal{V}}
\newcommand{\cD}{\mathcal{D}}
\newcommand{\cH}{\mathcal{H}}
\newcommand{\bigtimes}{\mathop{\mathsf{X}}}
\newcommand{\tf}{{\tilde f}}
\newcommand{\bx}{{\bar x}}
\newcommand{\bd}{{\bar d}}
\newcommand{\hx}{{\hat x}}
\newcommand{\tx}{{\tilde x}}
\newcommand{\bz}{{\bar z}}
\newcommand{\hz}{{\hat z}}
\newcommand{\hk}{{\hat k}}
\newcommand{\bw}{{\bar w}}
\newcommand{\hK}{{\widehat{K}}}
\newcommand{\hJ}{{\widehat{J}}}
\newcommand{\balf}{{\bar \alf}}
\newcommand{\bnu}{{\bar \nu}}
\newcommand{\hbeta}{{\hat \beta}}
\newcommand{\hg}{{\hat g}}
\newcommand{\Rn}{\bR^n}
\newcommand{\cconv}{{\mbox{cl\! conv}\, }}
\newcommand{\bu}{{\bar u}}
\newcommand{\ip}[2]{\left\langle #1\, ,#2\right\rangle}
\newcommand{\intr}[1]{{\,\mathrm{int}\, #1}}
\newcommand{\R}{\mathbb{R}}
\newtheorem{example}{Example}
\begin{document}
\title[Non-Lipschitz Gradient Sampling]{Convergence of the Gradient Sampling Algorithm
on Directionally Lipschitz Functions}

\author{J.~V.~Burke}
\thanks{J.~V.~Burke, University of Washington, Seattle, WA.
\email{jvburke01@gmail.com}
Supported in part by the U.S. National Science Foundation grant DMS-1514559.
}
\author{Q.~Lin}
\thanks{Q.~Lin, Amazon Corp., 410 Terry Ave N., Seattle, WA.\\ 
\email{qiuyinglin5499@gmail.com}}

\begin{abstract}
The convergence theory for the gradient sampling algorithm 
is extended to directionally Lipschitz functions. 
Although directionally Lipschitz functions are not necessarily locally Lipschitz,
they are almost everywhere differentiable and 
well approximated by gradients and so are a natural candidate 
for the application of the gradient sampling algorithm. 
The main obstacle to this extension is the potential unboundedness
or emptiness of the Clarke subdifferential at points of interest.
The convergence analysis we present provides one path to
addressing these issues. In particular, we recover the usual convergence
theory when the function is locally Lipschitz. Moreover, if the
algorithm does not drive a certain measure of criticality to zero,
then the iterates must converge to a point at which either the Clarke
subdifferential is empty or the direction of steepest descent is
degenerate in the sense that it does lie in the interior
of the domain of the regular subderivative.

\smallskip

\noindent
This paper is dedicated to Terry Rockafellar on the occasion of
his 85th birthday. 
\keywords{gradient sampling algorithm \and non-Lipschitzian \and directionally Lipschitz \and nonsmooth optimization}
\end{abstract}

\maketitle

\section{Introduction}
The gradient sampling (GS) algorithm is designed to solve non-smooth optimization problems by using locally sampled gradients to
approximate the Clarke subdifferential and the associated direction of steepest descent. 
The objective 
is assumed to be continuously differentiable on an open set $\cD$ of full measure. 
Although the method was originally applied
to minimize non-Lipschitzian
nonsymmetric spectral functions 
\cite{BurkLewiOver02b,BurkLewiOver03,BurkLewiOver05},
the existing convergence theory
only applies to locally Lipschitz functions. 
The purpose of this note is to extend the convergence theory to 
{\it directionally Lipschitz functions} (see Definition \ref{def:DL}). 
Directionally Lipschitz functions
were introduced
by Rockafellar in \cite{RTR79b} and further developed in \cite{RTR80}. 
Loosely speaking, a function is directionally Lipschitz at a point $\bx$
if it is possible
to ``tilt'' its epigraph in such a way that the tilted set is the epigraph of a function
that is locally Lipschitz at $\bx$. 
A function can be directionally Lipschitz at a point but not locally Lipschitz or even continuous at that point. Some of the ideas for our approach 
appear in \cite{Lin09} and are motivated by the
results from \cite{BurkLewiOver02a,BurkLewiOver05,BCLOS20,Kiwi07}. 
In particular, our choice of directionally Lipschitz functions is inspired
by \cite[Corollary 6.1]{BurkLewiOver02a} (see Theorem \ref{thm:grad approx}) where it is shown that nearby gradients can be used to approximate their subdifferential. 
The primary difficulty in the non-Lipschitzian case is 
the potential
unboundedness or emptiness of the subdifferential.
Indeed, in this setting, it is not entirely clear 
what kind of convergence result can reasonably be expected. 

Both our choice of how the algorithm is stated and the consequent 
convergence theory closely parallels those proposed by Kiwiel in \cite{Kiwi07}
since his approach provides the most complete picture in the Lipschitzian case. 
A nice discussion of this approach as well as other recent advances and ongoing work is given in \cite{BCLOS20}. The paper proceeds as follows. 
Section \ref{sec:two} is broken into 4 parts: (1) notation and a review
of the subdifferential calculus especially the Clarke subdifferential and its relationship to the 
generalized (Mordukhovich or limiting) subdifferential, (2) pointedness of cones and its use in approximating the distance to a convex set, 
(3) the direction of steepest descent for nonsmooth functions, and 
(4) an introduction to
directionally Lipschitz functions.
In Section 3 we state the version of the gradient sampling algorithm to be examined and present our convergence results. 
We conclude in Section 4 with a few comments on on the algorithm and
its convergence.

\section{Preliminaries} \label{sec:two}

\subsection{Notation}
Our notation is based on that used in \cite{RW98}.
We work in the $n$-dimensional real Euclidean space $\Rn$
with the standard inner product $\ip{x}{y}$, with $\norm{\cdot}$
denoting the associated 2-norm whose closed unit ball is
$\bB:=\set{x\in\bX}{\norm{x}\le 1}$. 
Given $x\in\Rn$, define the open $\eps>0$ ball about $x$ as the set
$B_\eps(x):=\set{y}{\norm{x-y}<\eps}$.
For $C\subset\bX$, denote
the closure, interior, and convex hull of $C$ by $\cl C,\ \intr C$, and
$\conv C$, respectively. 
The distance to $C$ is defined by
$\dist{x}{C}:=\inf_{z\in C}\norm{x-z}$.
The set of natural numbers is denoted by 
$\bN:=\{1,2,\dots\}$.
The unit simplex in $\bR^{n+1}$ is given by $\Del_n:=\set{\lam\in \bR^{n+1}_+}{\lam_1+\dots+\lam_{n+1}=1}$.
Let $\bR_+$ the set of non-negative reals, and
$\bR_{++}$ the set of positive reals.

A set $K\subset\Rn$ is said to be a cone if $0\in K$ and $\lam x\in K$
for all $x\in K$ and $\lam\ge 0$. It is said to be a convex cone if it
is both a cone and a convex set. The cone $K\subset\Rn$ is said to be 
\emph{pointed} if for all $k\ge 2$ and $x^1,x^2,\dots,x^k\in K$ one has 
$x^1+x^2+\dots+x^k=0$ if and only if $x^i=0,\ i=1,\dots,k$.

The horizon cone and polar of $C\subset\Rn$ are given by
\[\begin{aligned}
C^\infty&:=\set{w}{\exists \{x^k\}\subset C,\ t_k\downarrow 0\text{ s.t. }t_kx^k\rightarrow w}\ \text{and}\\  
C^*&:=\set{v\in\bX^*}{\ip{v}{x}\le 1\ \forall, x\in C},
\end{aligned}
\]
respectively. The polar of a set is aways a closed convex set.
The convex indicator and support function for $C$
are give by 
\[
\del_C(x):=
\begin{cases}
0,&x\in C,\\ 
+\infty,&x\notin C
\end{cases}
\qquad\text{ and }\qquad
\del_C^*(v):=\sup_{x\in C}\ip{v}{x}\ ,
\]
respectively.

Given Euclidean spaces $\bX$ and $\bY$,
a mapping $S$ from $\bX$ to $\bY$ 
for which $S(x)$ is a subset of 
$\bY$ for every $x\in\bX$ (possibly empty) is called a multivalued mapping
and is denoted by $\mmap{S}{\bX}{\bY}$. The domain of $S$ is the set
$\dom{S}:=\set{x}{S(x)\ne\emptyset}$.  
Such a mapping $S$ is said to be outer semicontinuous (osc) if
\[
\set{v}{\exists\,(x^k,v^k)\rightarrow (x,v)\text{ with }v^k\in S(x^k)\, \forall\, k}
\subset S(x)\quad\forall\, x\in\dom{S}.
\]
The graph of $S$ is the set $\graph(S):=\set{(x,y)}{y\in S(x)}$ and the osc hull
of $S$ is the multivalued mapping $\mmap{\cl{S}}{\bX}{\bY}$ such that
$\graph(\cl{S})=\cl{\graph(S)}$. 

Let $\map{f}{\Rn}{\bar\bR:=\bR\cup\{+\infty\}}$ and set
\[\begin{aligned}
\dom{f}&:=\set{x}{f(x)<\infty}\\
\epi{f}&:=\set{(x,\mu)}{f(x)\le\mu}\subset\Rn\times\bR.
\end{aligned}\] 
Let $\bx\in\dom{f}$. The \emph{regular subdifferential} of $f$ at $\bx$
is given by 
$\rsd f(x)\!\!:=\!\!\set{v}{f(z)\ge f(x)+\ip{v}{z-x}+o(\norm{z-x})}$. 
This set is always closed and convex, but may be empty.
The \emph{subdifferential} of $f$ at $\bx$
is given by 
\[
\sd f(\bx)=\set{v}{\exists\, x^k\rightarrow \bx,\ 
v^k\rightarrow v\text{ s.t. }
v^k\in\rsd f(x^k)\ \forall\, k\in\bN},
\] 
and the 
\emph{horizon subdifferential} of $f$ at $\bx$
is given by 
\begin{equation}\label{eq:def hsd}
\hsd f (\bx):=\set{v}{
\begin{array}{c}
\exists\, x^k\rightarrow \bx,
\ t_k\downarrow 0,\ t_kv^k \rightarrow v,
\text{ s.t. }
\\
v^k\in\rsd f(x^k)\ \forall\, k\in\bN
\end{array}}.
\end{equation}
These sets are always closed, and if $f$ is lsc at $\bx$ then either $\sd f(\bx)\ne\emptyset$
or $\hsd f(\bx)$ contains at least one nonzero element
\cite[Corollary 8.10]{RW98}.
These subdifferentials are all mutivalued mappings with $\sd f$ and $\hsd f$ osc
along $f$-attentive sequences
by construction (an $f$-attentive sequence is any sequence $\{x^k\}\subset \dom{f}$
such that if $x^k\rightarrow \bx$ then $f(x^k)\rightarrow f(\bx)$).
Given $\eps>0$ define
 \[
 \csd_\eps f(\bx):=\set{v}{v\in \csd f(x)\text{ for some } x\in\bx+\eps\bB}
 \]
to be the $\eps$-approximate subgradients of $f$ at $\bx$.

Given a closed nonempty set $C\subset\Rn$ and a point $\bx\in C$, the 
regular normal cone to $C$ at $\bx$ is the set
\[
\widehat N_C(\bx):=\set{v}{\ip{v}{x-\bx}\le o(\norm{x-\bx})\text{ for }x\in C}.
\]
The osc hull of this multivalued mapping is called the normal
cone mapping and is denoted by $N_C(\bx)$. 
The \emph{Clarke normal cone} to $C$
a $x$ is given by $\cnc{x}{C}:=\cl\conv N_C(\bx)$. The cone of 
\emph{regular} tangents to $C$ at a point $x\in C$ 
where $C$ is locally closed is  
$\widehat T_C(x):=N_C(x)^*$ \cite[Theorem 6.28]{RW98}.

Given $\map{f}{\Rn}{\bar\bR:=\bR\cup\{+\infty\}}$
and $\bx\in\dom{f}$ at which $f$ is lsc, 
the \emph{Clarke subdifferential} of $f$ at $\bx$ is 
$\csd f(x)\!\!:=\!\!\set{v}{(v,-1)\in \bar{N}_{\epi{f}}(x,f(x))}$, and 
$\hcsd f(x)\!\!:=\!\!\set{v}{(v,0)\in \bar{N}_{\epi{f}}(x,f(x))}$ is
the
\emph{Clarke horizon subdifferential} of $f$ at $\bx$
\cite[Theorem 8.49]{RW98}.
The subdifferential and the Clarke subdifferential reduce to the usual
subdifferential in convex analysis when $f$ is convex.
Finally, the regular subderivative of $f$ at $x\in\dom{f}$, 
denoted $\map{\hat df(x)}{\Rn}{\bR\cup\{\pm\infty\}}$, 
at points where $f$ is lsc
is defined by the relation
$\epi(\hat df(x))=\widehat T_{\epi{f}}(x,f(x))$ \cite[Theorem 8.17]{RW98}.
The regular subderivative coincides with Clarke's directional derivative
when $f$ is locally Lipschitz \cite{Clar83}.
The following theorem establishes the relationships between the 
subdifferential and the Clarke subdifferential.

\begin{thm}[Subdifferential Relationships]
\cite[Theorem 8.49 and Exercise 8.23]{RW98}
\label{thm:basic sd}
Let $\map{f}{\Rn}{\Re}$ be locally lsc and finite-valued at $\bx\in\Rn$.
Then the following hold:
\begin{enumerate}
\item
$\sd f(x)$ and $\hsd f(x)$ are osc at $\bx$ with respect to $f-$attentive convergence, that is, with respect to sequences $\{x^k\}\subset \dom{f}$
such that $(x^k,f(x^k))\rightarrow (x,f(x))$.
\item 
$\csd f(\bx)$ is a closed convex set and $\hcsd f(\bx)$ is a closed convex cone.
\item 
$\hcsd f(\bx)=\csd f(\bx)^\infty$ when $\csd f(\bx)\ne\emptyset$, or equivalently,
$\sd f(\bx)\ne\emptyset$. 
\item
If the cone $\hsd f(\bx)$ is pointed 
(or equivalently, $\hcsd f(\bx)$ is pointed), then
\[
\csd f(\bx)= \conv \sd f(\bx) +\conv \hsd f(\bx)\ \mbox{ and }\ \hcsd f(\bx)=\conv \hsd f(\bx).
\]
\end{enumerate}
Moreover, if $\csd f(\bx)\ne\emptyset$ (equivalently,
$\sd f(\bx)\ne\emptyset$), then
\(
\hat df(x)=\del^*_{\csd f(x)}.
\)
\end{thm}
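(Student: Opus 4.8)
The plan is to route every assertion through a single object, the limiting normal cone $N_{\epi f}(\bx,f(\bx))$ to the epigraph, together with its Clarke closed convex hull $\cnc{\bx,f(\bx)}{\epi f}=\cl\conv N_{\epi f}(\bx,f(\bx))$. The starting point is the standard epigraphical correspondence \cite[Theorem 8.9]{RW98}: a pair $(v,\beta)$ lies in $N_{\epi f}(\bx,f(\bx))$ only if $\beta\le 0$, with $\beta<0$ forcing (after the normalization $\beta=-1$) $v\in\sd f(\bx)$, and $\beta=0$ giving $v\in\hsd f(\bx)$. Thus $N_{\epi f}(\bx,f(\bx))$ is, up to positive scaling, the union of the rays through $\{(v,-1):v\in\sd f(\bx)\}$ and the horizon directions $\{(w,0):w\in\hsd f(\bx)\}$. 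Since $\csd f$ and $\hcsd f$ are by definition the level $-1$ and level $0$ slices of $\cnc{\bx,f(\bx)}{\epi f}$, the entire theorem reduces to statements about slices and horizon cones of a closed convex cone.

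Part (1) is separate and essentially a definition chase: $\sd f$ and $\hsd f$ are the osc hulls of the regular subdifferential mapping, so a diagonal extraction over the defining sequences, using that along $f$-attentive sequences one has $f(x^k)\to f(\bx)$ by definition, yields osc directly. For parts (2) and (3) I would argue convex-geometrically. Closedness and convexity in (2) are immediate because $\cnc{\bx,f(\bx)}{\epi f}$ is a closed convex cone, hence its level $-1$ slice is closed and convex while its level $0$ slice is a closed convex cone. For (3), I would use the elementary fact that if $K$ is a closed convex cone and the slice $C:=\{v:(v,-1)\in K\}$ is nonempty, then its horizon cone is exactly $C^\infty=\{w:(w,0)\in K\}$; applying this with $K=\cnc{\bx,f(\bx)}{\epi f}$ gives $\csd f(\bx)^\infty=\hcsd f(\bx)$. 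The equivalence ``$\csd f(\bx)\ne\emptyset$ if and only if $\sd f(\bx)\ne\emptyset$'' then follows because a nonempty level $-1$ slice requires a genuine ray with $\beta<0$ in the generating cone.

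The technical heart is part (4). I would first compute the slices of the unclosed convex hull $\conv N_{\epi f}(\bx,f(\bx))$. Writing a generic level $-1$ element of the generated convex cone as a convex combination of normalized rays $(v^i,-1)$, $v^i\in\sd f(\bx)$, plus nonnegative multiples of horizon vectors $(w^j,0)$, $w^j\in\hsd f(\bx)$, one reads off that the level $-1$ slice of $\conv N_{\epi f}$ is exactly $\conv\sd f(\bx)+\conv\hsd f(\bx)$ and the level $0$ slice is $\conv\hsd f(\bx)$. It then remains to show that the closure in $\cnc{\bx,f(\bx)}{\epi f}=\cl\conv N_{\epi f}$ adds nothing, i.e.\ that $\conv\sd f(\bx)+\conv\hsd f(\bx)$ is already closed. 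This is precisely where pointedness enters: a Minkowski sum of the possibly unbounded convex set $\conv\sd f(\bx)$ with the convex cone $\conv\hsd f(\bx)$ is closed once $\conv\hsd f(\bx)$ is pointed and contains the recession directions of $\conv\sd f(\bx)$, a standard closedness criterion, and the hypothesis that $\hsd f(\bx)$ is pointed delivers exactly this. I expect this closedness step, together with the verification that no spurious recession directions survive the convex-hull closure, to be the main obstacle; the rest of (4) is bookkeeping on the two slices.

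For the final assertion I would exploit the polarity already recorded in the preliminaries, namely $\widehat T_{\epi f}(\bx,f(\bx))=N_{\epi f}(\bx,f(\bx))^*=\cnc{\bx,f(\bx)}{\epi f}^{*}$, the last equality because polarity is insensitive to taking closed convex hulls. Since $\epi(\hat df(\bx))=\widehat T_{\epi f}(\bx,f(\bx))$, the epigraph of the regular subderivative is the polar of the closed convex cone $K:=\cnc{\bx,f(\bx)}{\epi f}$. When $\csd f(\bx)=\{v:(v,-1)\in K\}$ is nonempty, the standard cone-slice/support-function duality identifies the epigraphical slice of this polar with the support function of that slice, giving $\hat df(\bx)=\del^*_{\csd f(\bx)}$. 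I would present this last step as a direct dualization of the slice description used throughout, so that the ``Moreover'' clause is a corollary of the same epigraphical picture rather than an independent argument.
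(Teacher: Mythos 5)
You should first note that the paper itself gives no proof of this theorem: it is quoted directly from \cite[Theorem 8.49 and Exercise 8.23]{RW98}, so your proposal can only be measured against the standard textbook argument, whose overall architecture (the epigraphical normal cone $N_{\epi f}(\bx,f(\bx))$ and its level $-1$ and level $0$ slices) you have correctly reproduced. Parts (1)--(3) and the final support-function identity are soundly sketched: the diagonal extraction for (1), the slice observations for (2), the rescaling argument identifying the horizon cone of the $-1$ slice of a closed convex cone with its $0$ slice for (3), and the polarity computation $\epi(\hat df(\bx))=\widehat T_{\epi f}(\bx,f(\bx))=N_{\epi f}(\bx,f(\bx))^*$ for the ``Moreover'' clause all go through as you describe.

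The genuine gap is in part (4), exactly at the step you flag as the main obstacle. The Minkowski-sum closedness criterion you invoke requires both summands to be closed, but $\conv\sd f(\bx)$ need not be closed: $\sd f(\bx)$ is closed but in general unbounded and nonconvex, and the convex hull of an unbounded closed set can fail to be closed. Passing to $\cl\conv\sd f(\bx)$ does not help, because the theorem asserts $\csd f(\bx)=\conv\sd f(\bx)+\conv\hsd f(\bx)$ with \emph{no} closure on the first summand, and no sum criterion applied to the closed hull can recover that sharper statement. Moreover, even granting closedness of the $-1$ slice, your mechanism says nothing about the $0$ slice: the identity $\hcsd f(\bx)=\conv\hsd f(\bx)$ requires showing that forming $\cl\conv N_{\epi f}(\bx,f(\bx))$ creates no new horizon directions, and this cannot be read off from the $-1$ slice, since closure and slicing at level $0$ do not commute (points with last coordinate strictly between $-1$ and $0$ cannot be rescaled into the $0$ slice, in contrast to the rescaling trick that works at level $-1$). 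The missing idea, which repairs both defects at once, is to transfer pointedness upstairs: every epigraphical normal $(u,\beta)$ has $\beta\le 0$, so in any vanishing sum $\sum_i(u^i,\beta_i)=0$ of elements of $N_{\epi f}(\bx,f(\bx))$ all $\beta_i$ must vanish, reducing the relation to a vanishing sum inside $\hsd f(\bx)\times\{0\}$; hence pointedness of $\hsd f(\bx)$ forces the entire closed cone $N_{\epi f}(\bx,f(\bx))$ to be pointed. One then invokes the fact that the convex hull of a pointed closed cone is a pointed closed cone \cite[Theorem 3.15]{RW98}, so $\conv N_{\epi f}(\bx,f(\bx))$ is already closed and coincides with the Clarke normal cone; your (correct) Carath\'eodory-style computation of its two slices, via the epigraphical correspondence of \cite[Theorem 8.9]{RW98}, then yields both identities in (4) exactly as stated, with no residual closure left to absorb.
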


\subsection{Pointedness}
\noindent
We review pointedness and a few of its properties. 

\begin{lem}\label{lem:pointed cones}
Let $K$ be a non-empty closed cone in $\Rn$
and consider the following statements:
\begin{enumerate}
\item[(i)]
$K$ is pointed.
\item[(ii)] $K\cap(-K)=\{0\}$.
\item[(iii)] $\intr K^*\ne\emptyset$. 
\end{enumerate}
Statements (i) and (ii) are equivalent, and if $K$ is convex, both are equivalent to (iii). Moreover, in the convex case,
$z\in\intr K^*$ if and only if there exist $\eps>0$ such that
$\ip{z}{w}\le -\eps\norm{w}$ for all $w\in K$.
\end{lem}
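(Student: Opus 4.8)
\textit{Proof plan.} The plan is to first record that for a cone the polar simplifies to $K^*=\set{v}{\ip{v}{x}\le 0\ \forall\,x\in K}$, since $x\in K$ forces $\lam x\in K$ for all $\lam\ge 0$. The equivalence (i)$\Leftrightarrow$(ii) I would then handle directly. The implication (i)$\Rightarrow$(ii) is immediate: if $x\in K\cap(-K)$ then $x,-x\in K$ sum to zero, and pointedness with $k=2$ gives $x=0$. For (ii)$\Rightarrow$(i), given $x^1,\dots,x^k\in K$ with $\sum_i x^i=0$, I would write $x^1=-\sum_{i\ge 2}x^i$, observe that $\sum_{i\ge 2}x^i\in K$ by closure of the (convex) cone under addition, so that $x^1\in K\cap(-K)=\{0\}$, and then induct. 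This reverse step is the one that leans on the cone's additive structure.

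Next I would establish the quantitative ``moreover'' clause, since it is the hinge linking (iii) to a usable estimate. If $\ip{z}{w}\le -\eps\norm{w}$ for all $w\in K$, then for any $z'$ with $\norm{z'-z}<\eps$ and any $w\in K$,
\[
\ip{z'}{w}=\ip{z}{w}+\ip{z'-z}{w}\le\big(\norm{z'-z}-\eps\big)\norm{w}\le 0,
\]
so $B_\eps(z)\subset K^*$ and hence $z\in\intr K^*$. Conversely, if $z\in\intr K^*$, choose $\del>0$ with $B_\del(z)\subset K^*$; then $\ip{z+u}{w}\le 0$ for all $w\in K$ and all $\norm{u}<\del$, and letting $u\to\del\,w/\norm{w}$ (for $w\ne 0$) yields $\ip{z}{w}\le-\del\norm{w}$, so $\eps=\del$ serves.

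With this characterization in hand, (iii)$\Rightarrow$(ii) is quick: taking $z\in\intr K^*$ with its associated $\eps$, any $x\in K\cap(-K)$ obeys both $\ip{z}{x}\le-\eps\norm{x}$ and $\ip{z}{-x}\le-\eps\norm{x}$, forcing $\eps\norm{x}\le\ip{z}{x}\le-\eps\norm{x}$ and hence $x=0$. The crux, and what I expect to be the main obstacle, is (ii)$\Rightarrow$(iii): manufacturing a single interior point of $K^*$ out of pointedness. Excluding the trivial case $K=\{0\}$ (where $K^*=\Rn$), I would pass to the compact base $C:=\set{x\in K}{\norm{x}=1}$ and show $0\notin\conv C$: a relation $0=\sum_i\lam_i x^i$ with $x^i\in C$, $\lam_i\ge 0$, $\sum_i\lam_i=1$ exhibits vectors $\lam_i x^i\in K$ summing to zero, so pointedness forces each $\lam_i x^i=0$ and hence each $\lam_i=0$ (as $\norm{x^i}=1$), contradicting $\sum_i\lam_i=1$. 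Since $\conv C$ is compact by Carath\'eodory's theorem, strict separation of the point $0$ from $\conv C$ produces $z$ and $\alf>0$ with $\ip{z}{x}\le-\alf$ for all $x\in C$; homogenizing through $w/\norm{w}\in C$ yields $\ip{z}{w}\le-\alf\norm{w}$ for every $w\in K$, and the ``moreover'' clause already proved then places this $z$ in $\intr K^*$.

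The two points needing care are the compactness of $\conv C$, without which only weak (non-strict) separation is available and the constant $\alf$ could collapse to $0$, and the homogenization that promotes a bound on the unit base to the scale-invariant inequality on all of $K$; everything else is routine bookkeeping.
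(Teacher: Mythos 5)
Your proposal is essentially correct for convex cones --- the only case in which all of the lemma's claims actually hold, and the only case the paper ever uses --- and it takes a genuinely different route from the paper. The paper dispatches the equivalences (i)--(iii) entirely by citation to \cite[Proposition 3.14, Exercise 6.22]{RW98} and proves only the quantitative ``moreover'' clause; you prove everything from scratch. Your argument for the ``moreover'' clause is the paper's own up to bookkeeping (the paper bounds $0\ge\ip{z}{w}+\eps\sup_{u\in\bB}\ip{u}{w}=\ip{z}{w}+\eps\norm{w}$ in one direction and perturbs $z$ by $\frac{\eps}{2}u$ in the other). What your route adds is a self-contained proof of the crux (ii)$\Rightarrow$(iii): restrict to the compact base $C=\set{x\in K}{\norm{x}=1}$, use pointedness to get $0\notin\conv C$, use compactness of $\conv C$ (via Carath\'eodory) to separate strictly, then homogenize back to $K$ and invoke the ``moreover'' clause. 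This is the standard proof behind the cited \cite[Exercise 6.22]{RW98}, and the two cautions you flag (compactness of $\conv C$ is what makes the separation strict; homogenization is what upgrades the bound from the base to all of $K$) are exactly the right ones. Your (iii)$\Rightarrow$(ii) via the quantitative clause is also fine.

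The one genuine issue is (ii)$\Rightarrow$(i). Your induction writes $x^1=-\sum_{i\ge2}x^i$ and needs $\sum_{i\ge2}x^i\in K$, i.e.\ closure of $K$ under addition, which for a cone is precisely convexity. The lemma as stated asserts the equivalence of (i) and (ii) for an \emph{arbitrary} non-empty closed cone, and in that generality the implication is false, so this step cannot be repaired: for the closed non-convex cone $K=\bR_+\cdot(1,0)\,\cup\,\bR_+\cdot(0,1)\,\cup\,\bR_+\cdot(-1,-1)\subset\bR^2$ one has $K\cap(-K)=\{0\}$, yet $(1,0)+(0,1)+(-1,-1)=0$ shows $K$ is not pointed in the sense defined in the paper. (Your $k=2$ argument for (i)$\Rightarrow$(ii) is valid in full generality.) The flaw here is really in the lemma's unqualified statement: the cited \cite[Proposition 3.14]{RW98} asserts this equivalence for \emph{convex} cones, and every application in the paper (horizon cones of convex sets, Clarke horizon subdifferentials) is to a convex cone. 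Your proof, which quietly assumes convexity at exactly this point, is therefore as strong as the statement can be made; but you should promote the parenthetical ``(convex)'' to an explicit hypothesis in that step and note that without it the implication (ii)$\Rightarrow$(i) genuinely fails.
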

\begin{proof}
The statements concerning (i)-(iii) follow from 
 \cite[Proposition 3.14, Exercise 6.22]{RW98}. 
 Therefore, we need only establish the 
the final statement of the lemma.
Let $z\in \intr K^*$ and 
$\eps>0$ be such that $z+\eps\bB\subset K^*$.
Then, for all $w\in K$ and $u\in\bB$, 
\(
0\ge\ip{z+\eps u}{w}=\ip{z}{w}+\eps\ip{u}{w}.
\)
Hence, \(0\ge \ip{z}{w}+\eps\sup_{u\in\bB}\ip{u}{w}=\ip{z}{w}+\eps\norm{w}\).

On the other hand, if there is a $z\in\Rn$ and $\eps>0$ is such that
$\ip{z}{w}\le -\eps\norm{w}$ for all $w\in K$, then, for all $u\in\bB$
and $w\in K$,
\(
\ip{z+\frac{\eps}{2}u}{w}\le -\eps\norm{w}+\frac{\eps}{2}\norm{w}
=-\frac{\eps}{2}\norm{w}
\)
so that $z\in \intr K^*$.
\end{proof}

We now connect the
pointedness of $C^\infty$
to projections
and the distance function for a non-empty closed convex set.
This result extends lemma \cite[Lemma 3.1]{Kiwi07} and
introduces a condition that is key to our analysis of the non-Lipschitzian
setting. 

\begin{lem}[Pointedness, and Projections]
\label{lem:dist bd and pointed hzn}
Let $C$ be a non-empty closed convex subset of $\Rn$ such that 
$C^\infty$ is pointed.
Let $z\notin C$ be such that
\bequation\label{eq:interior}
z-\proj{C}(z)\in \intr (C^\infty)^*\ .
\eequation 
Then, for all $\beta\in(0,1)$, there is a $\del>0$ such that if
$u,v\in C$ with 
$\norm{z-u}\le \dist{z}{C}+\del$, then $\ip{z-v}{z-u}>\beta \norm{z-u}^2$.  
In particular, if $z=0$, then $\ip{v}{u}>\beta\norm{u}^2$ whenever
$u,v\in C$ and $u$ satisfies $\norm{u}\le\dist{0}{C}+\del$.
\end{lem}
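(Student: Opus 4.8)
The plan is to argue by contradiction after first extracting the two structural facts that do all the work. Write $p:=\proj{C}(z)$ and $d:=\dist{z}{C}=\norm{z-p}>0$, the strict positivity coming from $z\notin C$. First, the variational inequality for the projection onto a convex set gives $\ip{z-p}{x-p}\le 0$ for every $x\in C$; together with the identity $\norm{z-x}^2=d^2+2\ip{z-p}{p-x}+\norm{p-x}^2$ this yields the one-sided estimate $\norm{p-x}^2\le\norm{z-x}^2-d^2$ for all $x\in C$. Second, since $C^\infty$ is a closed convex cone and $z-p\in\intr{(C^\infty)^*}$ by \eqref{eq:interior}, the final statement of Lemma \ref{lem:pointed cones} (applied with $K=C^\infty$) furnishes an $\eps>0$ with $\ip{z-p}{w}\le-\eps\norm{w}$ for every $w\in C^\infty$.

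Suppose now the conclusion fails for some $\beta\in(0,1)$. Then there exist $u_k,v_k\in C$ with $\norm{z-u_k}\le d+1/k$ and $\ip{z-v_k}{z-u_k}\le\beta\norm{z-u_k}^2$. The first estimate forces $\norm{p-u_k}^2\le 2d/k+1/k^2\to 0$, so $u_k\to p$, whence $z-u_k\to z-p$ and $\norm{z-u_k}^2\to d^2$. I would then split on whether $\{v_k\}$ is bounded. In the bounded case, pass to a subsequence with $v_k\to v_*\in C$ (using that $C$ is closed) and take limits in the failed inequality to get $\ip{z-v_*}{z-p}\le\beta d^2$; but the projection inequality gives $\ip{z-v_*}{z-p}=d^2+\ip{z-p}{p-v_*}\ge d^2>\beta d^2$, a contradiction.

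The unbounded case is where the non-Lipschitzian difficulty is concentrated and is the step I expect to be the main obstacle: $C$ may itself be unbounded, so $v_k$ can escape to infinity and no limit point is available for a direct passage to the limit. The remedy is to normalize and read off a horizon direction. Along a subsequence $\norm{v_k}\to\infty$ and $v_k/\norm{v_k}\to w_*$ with $\norm{w_*}=1$; taking $t_k:=1/\norm{v_k}\downarrow 0$ in the definition of $C^\infty$ shows $w_*\in C^\infty$. Dividing the failed inequality by $\norm{v_k}$ and letting $k\to\infty$ --- the term $\ip{z}{z-u_k}/\norm{v_k}$ and the whole right-hand side tend to $0$, while $\ip{v_k/\norm{v_k}}{z-u_k}\to\ip{w_*}{z-p}$ --- gives $-\ip{w_*}{z-p}\le 0$, that is $\ip{z-p}{w_*}\ge 0$. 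This contradicts $\ip{z-p}{w_*}\le-\eps<0$ from the second structural fact. As both cases are impossible, a suitable $\del$ must exist. Finally, the displayed special case is read off by specializing to $z=0$, where $\norm{z-u}=\norm{u}$ and $\ip{z-v}{z-u}=\ip{v}{u}$.
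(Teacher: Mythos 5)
Your proof is correct and follows essentially the same route as the paper's: a contradiction argument with the dichotomy on boundedness of $\{v_k\}$, the projection variational inequality handling the bounded case, and a normalized horizon direction $w_*\in C^\infty$ combined with the interior characterization in Lemma \ref{lem:pointed cones} handling the unbounded case. The only (welcome) refinement is your quantitative estimate $\norm{p-u_k}^2\le\norm{z-u_k}^2-d^2$, which gives full-sequence convergence $u_k\to p$ directly, where the paper instead passes to a subsequence and implicitly invokes uniqueness of the projection.
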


\begin{proof}
Let $\beta\in(0,1)$.
If the result were false, there exist a sequence $\{(u^k,v^k)\}\subset C\times C$ 
with $\norm{z-u^k}\le \dist{z}{C}+1/k$ such that
\begin{equation}\label{eq:hzn1} 
\ip{z-v^k}{z-u^k}\le \beta \norm{z-u^k}^2\quad\forall\, k. 
\end{equation}
Since $\{u^k\}$ is bounded we can assume with no loss of generality that 
$u^k\rightarrow \proj{C}(z)$.
The projection theorem tells us that 
\[ 
\ip{v-\proj{C}(z)}{z-\proj{C}(z)}\le 0\quad \forall\, v\in C,
\] 
or equivalently,
\begin{equation}\label{eq:hzn2} 
\dist{z}{C}^2\le \ip{z-v}{z-\proj{C}(z)}\quad\forall\, v\in C.
\end{equation}

If $\{v^i\}$ has a bounded subsequence, we can again assume with no loss in generality that 
$v^i\rightarrow \bv\in C$. Then, by \eqref{eq:hzn1}, $\ip{z-\bv}{z-\proj{C}(z)}\le \beta \dist{z}{C}^2$
which contradicts \eqref{eq:hzn2} since $\beta\in(0,1)$ and $\dist{z}{C}>0$. Hence,
the sequence $\{v^i\}$ is divergent. Consequently, we can assume,
with no loss in generality, that $v^i/\norm{v^i}\rightarrow \bv\in C^\infty$ with $\norm{\bv}=1$.
Dividing \eqref{eq:hzn1} by $\norm{v^i}$ and taking the limit yields $\ip{\bv}{z-\proj{C}(z)}\ge 0$.
But $\bv\in C^\infty$ and $z-\proj{C}(z)\in\intr (C^\infty)^*$, so, by 
Lemma \ref{lem:pointed cones},
there is an $\eps>0$ such that $\ip{\bv}{z-\proj{C}(z)}\le-\eps\norm{\bv}$. This contradiction
establishes the result.
\end{proof}

Condition \eqref{eq:interior} 
plays a central role
in our analysis of the GS algorithm.
The following lemma gives insight into this condition
by describing 
properties of the horizon cone $C^\infty$ and its
polar.

\begin{lem}[Normal, Barrier, and Horizon Cones]
\cite[Lemma 5]{BuD05}\label{lem:nbh cones}
Let $C$ be a non-empty closed convex set and define
$K:=\bigcup_{x\in C}N_C(x)$. Then 
$\relin{ \barrier C}\subset K\subset \barrier C$, and
\[
\cl K=\cl{\barrier C}=(C^\infty)^*
\ \mbox{ and }\
C^\infty=(\barrier C)^*,
\]
where $\barrier C:=\dom{\sig_C}$ is called
the barrier cone of $C$.
\end{lem}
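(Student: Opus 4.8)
The plan is to prove the two displayed identities by sandwiching $K$ between $\relin\barrier C$ and $\barrier C$ and then passing to closures and polars. The right-hand inclusion $K\subset\barrier C$ is immediate from the definitions: if $v\in N_C(x)$ for some $x\in C$, then $\ip{v}{y-x}\le 0$ for every $y\in C$, so $\sig_C(v)=\sup_{y\in C}\ip{v}{y}=\ip{v}{x}<\infty$, whence $v\in\dom{\sig_C}=\barrier C$. Since $\sig_C=\del_C^*$ is a proper sublinear function, $\barrier C$ is a convex cone, which I will use freely below. Note that $K$, being a union of cones, need not itself be convex or closed, so the best one can hope for from $K$ alone is a statement about $\cl K$.

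For the left-hand inclusion I would argue through the subdifferential of the support function. Because $C$ is closed convex, $\del_C$ is a closed proper convex function with $\del_C^{**}=\del_C$, so the conjugate subgradient inversion gives $x\in\sd\sig_C(v)\iff v\in\sd\del_C(x)=N_C(x)$, and therefore
\[
\sd\sig_C(v)=\argmax_{x\in C}\ip{v}{x}=\set{x\in C}{v\in N_C(x)}.
\]
Thus $v\in K$ exactly when $\sd\sig_C(v)\ne\emptyset$. Since $\sig_C$ is a proper convex function, its subdifferential is nonempty throughout the relative interior of its domain, i.e.\ on $\relin\barrier C$, which yields $\relin\barrier C\subset K$. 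Taking closures and using $\cl(\relin\barrier C)=\cl\barrier C$ for the convex set $\barrier C$ gives $\cl\barrier C\subset\cl K\subset\cl\barrier C$, hence $\cl K=\cl\barrier C$.

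It remains to match these closures with the cones built from $C^\infty$, and for this I would first prove $C^\infty=(\barrier C)^*$. For closed convex $C$ the horizon cone coincides with the recession cone (see \cite{RW98}), so $w\in C^\infty$ iff $x+tw\in C$ for all $x\in C$ and $t\ge 0$. If $w\in C^\infty$ and $v\in\barrier C$, then $\ip{v}{x}+t\ip{v}{w}\le\sig_C(v)<\infty$ for all $t\ge 0$ forces $\ip{v}{w}\le 0$, so $w\in(\barrier C)^*$. Conversely, if $w\notin C^\infty$ there are $x_0\in C$ and $t>0$ with $x_0+tw\notin C$; strictly separating this point from the closed convex set $C$ produces $v$ and $\gamma$ with $\ip{v}{x_0+tw}>\gamma\ge\ip{v}{y}$ for all $y\in C$, so $\sig_C(v)\le\gamma<\infty$ (giving $v\in\barrier C$) while $t\ip{v}{w}>\gamma-\ip{v}{x_0}\ge 0$ forces $\ip{v}{w}>0$, so $w\notin(\barrier C)^*$. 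This establishes $C^\infty=(\barrier C)^*$. Taking polars of both sides and invoking the bipolar theorem for the convex cone $\barrier C$, namely $((\barrier C)^*)^*=\cl\barrier C$, yields $(C^\infty)^*=\cl\barrier C$, which combined with the previous paragraph completes the chain $\cl K=\cl\barrier C=(C^\infty)^*$.

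The main obstacle is the left-hand inclusion $\relin\barrier C\subset K$: its content is that the supremum defining $\sig_C(v)$ is actually \emph{attained} on $C$ for every $v$ in the relative interior of $\barrier C$. This attainment is precisely the nonemptiness of $\sd\sig_C$ on $\relin(\dom{\sig_C})$, and it is genuinely confined to the relative interior—on the relative boundary of $\barrier C$ the support function may fail to be attained, so $v\in\barrier C\setminus K$ is possible. This is exactly why only $\cl K$, and not $K$ itself, coincides with $\barrier C$ and $(C^\infty)^*$.
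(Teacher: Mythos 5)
Your proof is correct. Note first that the paper itself supplies no proof of this lemma: it is imported verbatim as \cite[Lemma 5]{BuD05}, so there is no internal argument to compare against, and your write-up in effect supplies the proof the paper omits. Your route is the natural convex-analytic one and, as far as the key mechanisms go, matches what underlies the cited result: (1) $K\subset\barrier C$ because a normal vector's supremum over $C$ is attained at its base point; (2) $\relin\barrier C\subset K$ via the conjugate-subgradient inversion $x\in\sd\sig_C(v)\iff v\in N_C(x)$ together with nonemptiness of the subdifferential of the proper convex function $\sig_C$ throughout $\relin\dom{\sig_C}$; (3) $C^\infty=(\barrier C)^*$ from the recession-cone characterization of $C^\infty$ for closed convex $C$ plus strict separation; and (4) the bipolar theorem for the convex cone $\barrier C$ to get $(C^\infty)^*=\cl\barrier C=\cl K$. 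All four steps are sound, and your closing remark correctly identifies the one delicate point: attainment of $\sig_C(v)$ can fail on the relative boundary of $\barrier C$, which is exactly why only $\cl K$, not $K$, appears in the identities. One small point worth making explicit if this were to be inserted into the paper: the paper defines $N_C$ as the osc hull of the regular normal cone, so you are tacitly invoking the standard fact (\cite[Theorem 6.9]{RW98}) that for convex $C$ this coincides with the normal cone of convex analysis, i.e.\ with $\sd\del_C$; with that one citation added, the argument is complete.
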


Recall that it is always
the case that 
$z-\proj{C}(z)\in \ncone{\proj{C}(z)}{C}$, and so, by Lemma  
\ref{lem:nbh cones}, we have
\[
z-\proj{C}(z)\in \ncone{\proj{C}(z)}{C}\subset
\cl{\bigcup_{x\in C}N_C(x)}=(C^\infty)^*.
\]
In particular, if $C$ is bounded, then 
$(C^\infty)^*=\Rn$ so condition 
\eqref{eq:interior} is trivially satisfied.
Intuitively, 
the ``smaller''
the horizon cone of $C$ the more ``likely'' condition \eqref{eq:interior}
is satisfied.

\subsection{Steepest Descent Directions}
In the smooth setting the direction of steepest decent is given by 
the direction of unit length that minimizes the directional derivative.
By contrast, in the nonsmooth setting there are several notions
of directional derivative to choose from. 
From a numerical perspective, the
most useful permit a dual representation as the support function 
of an associated subdifferential which in turn yields a dual
representation of the direction of steepest descent via the Minimum Norm
Duality Theorem, e.g. see \cite[Theorem 2.8]{Burke83}. 

Since our analysis uses the Clarke subdifferential, our direction of
steepest descent is based on the regular subderivative
(see Theorem \ref{thm:basic sd}). That is,
the direction of steepest descent for $f$ at 
$x$ is given by
\bequation \label{eq:sd direction}
\bd_x:=\argmin_{\norm{x}\le 1}\hat df(x)(d)\ .
\eequation
The dual to this optimization problem is given by the Minimum Norm
Duality Theorem. 

\begin{thm} [Minimum Norm Duality Theorem]\cite{Nir61} 
\label{thm:MND}
Let $\bX$ be a normed linear space with norm $\norm{\cdot}$
and dual norm $\dnorm{\cdot}$, and let $\bB$ denote the
closed unit ball in $\bX$. Given a nonempty closed convex set 
$C\subset  \mathbb{X}^*$ and $\bz\in\bX^*$ with $\bz\notin C$, we have
\begin{equation}\label{eq:dist dual}
\text{dist}_*(\bz\mid C)=
\sup_{\norm{v}\le 1}[\ip{v}{\bz}-\del^*_C(v)],
\end{equation}
where $\del_C$ is the convex indicator of $C$ and  
$f^*$ denotes the convex conjugate of a function $f$. In particular,
if $\bz=0$, then
\[
\inf_{\norm{v}\le 1}\del^*_C(v)=-\text{dist}_*(0\mid C).
\]
\end{thm}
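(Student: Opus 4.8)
The plan is to read \eqref{eq:dist dual} as the equality of weak and strong duality for a convex program, with the only substantive step being a separation argument; attainment will then come for free. Write $d := \text{dist}_*(\bz \mid C) = \inf_{c\in C}\dnorm{\bz - c}$, which is strictly positive since $C$ is closed and $\bz\notin C$. First I would dispose of weak duality: for every $v\in\bX$ with $\norm{v}\le 1$ and every $c\in C$, the defining inequality of the dual norm gives $\ip{v}{\bz}-\ip{v}{c}=\ip{v}{\bz-c}\le\norm{v}\,\dnorm{\bz-c}\le\dnorm{\bz-c}$; taking the infimum over $c\in C$ and then the supremum over $\norm{v}\le 1$ produces $\sup_{\norm{v}\le 1}[\ip{v}{\bz}-\del_C^*(v)]\le d$. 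This half uses no convexity of $C$.

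For the reverse inequality I would separate. Consider the open ball $U:=\set{w}{\dnorm{w-\bz}<d}$ in $\bX^*$; by the definition of $d$ it is a nonempty convex open set disjoint from the nonempty convex set $C$, so the Hahn--Banach separation theorem yields a nonzero continuous linear functional on $\bX^*$ separating the two. In the finite-dimensional setting of interest ($\bX=\Rn$, where $\bX^{**}=\bX$) this functional is represented by some $\bar v\in\bX\setminus\{0\}$, and choosing the sign of $\bar v$ so that $U$ lies on the lower side I may assume $\sup_{w\in U}\ip{\bar v}{w}\le\inf_{c\in C}\ip{\bar v}{c}$. Evaluating both sides is then routine: writing $w=\bz+u$ with $\dnorm{u}<d$ and using $\sup_{\dnorm{u}\le 1}\ip{\bar v}{u}=\norm{\bar v}$ gives $\sup_{w\in U}\ip{\bar v}{w}=\ip{\bar v}{\bz}+d\,\norm{\bar v}$, while $\inf_{c\in C}\ip{\bar v}{c}=-\del_C^*(-\bar v)$. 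Setting $v_0:=-\bar v/\norm{\bar v}$ so that $\norm{v_0}=1$, dividing the separation inequality by $\norm{\bar v}>0$, and invoking positive homogeneity of $\del_C^*$ collapses the bound to $d\le\ip{v_0}{\bz}-\del_C^*(v_0)$. Hence the supremum in \eqref{eq:dist dual} is at least $d$; combined with weak duality this gives equality, attained at $v_0$.

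The hard part is precisely the separation step, and the delicate point is not the existence of a separating functional but the requirement that it be represented by an element $v$ of $\bX$ with $\norm{v}\le 1$ rather than by a mere element of the bidual $\bX^{**}$. In finite dimensions this is automatic, so the argument above is complete for the application in this paper. In a general normed space I would instead carry out the separation in the weak-$*$ topology on $\bX^*$ (assuming $C$ weak-$*$ closed), so that the separating functional is weak-$*$ continuous and hence lies in $\bX$, and I would invoke weak-$*$ compactness of the dual unit ball (Alaoglu's theorem) to guarantee that the supremum is attained. Finally, the displayed special case is immediate: setting $\bz=0$ in \eqref{eq:dist dual} turns the right-hand side into $\sup_{\norm{v}\le 1}[-\del_C^*(v)]=-\inf_{\norm{v}\le 1}\del_C^*(v)$, and rearranging gives $\inf_{\norm{v}\le 1}\del_C^*(v)=-\text{dist}_*(0\mid C)$.
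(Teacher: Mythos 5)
Your proof is correct for the setting the paper actually uses, but note that the paper itself offers no proof of this theorem: it is quoted as a classical result with a citation to Nirenberg's lecture notes \cite{Nir61}, and its only role is to be specialized (via Theorem \ref{thm:basic sd}) to $C=\csd f(x)$, $\bz=0$ in the Steepest Descent Duality corollary. So there is no ``paper proof'' to compare against; what you have done is supply the standard argument behind the citation, and you have done it essentially the canonical way: weak duality from the definition of the dual norm (needing neither convexity nor closedness of $C$), and strong duality by Hahn--Banach separation of the open ball $\set{w}{\dnorm{w-\bz}<d}$ from $C$, followed by normalization and positive homogeneity of $\del^*_C$. The algebra checks out, including the attainment at $v_0$ in finite dimensions.

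Two remarks on the general normed-space case, which you rightly flag as the delicate point. First, your diagnosis is exactly right: for $C\subset\bX^*$ merely norm-closed and convex, the separating functional lives in $\bX^{**}$, and the identity \eqref{eq:dist dual} can genuinely fail; one needs $C$ weak-$*$ closed (or $\bX$ reflexive) so that separation can be performed with a functional coming from $\bX$. Since the paper works in $\Rn$, where the Clarke subdifferential is norm-closed and $\bX^{**}=\bX$, this is harmless here. Second, your appeal to Alaoglu for attainment is misdirected: the supremum in \eqref{eq:dist dual} runs over the unit ball of the \emph{predual} $\bX$, not of $\bX^*$, so weak-$*$ compactness of the dual ball is not the relevant compactness; attainment in general would require, e.g., reflexivity (weak compactness of $\bB$) together with weak upper semicontinuity of $v\mapsto\ip{v}{\bz}-\del^*_C(v)$. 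This does not affect the theorem as stated, since it asserts only a supremum, and in the finite-dimensional application attainment follows from your explicit construction of $v_0$.
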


\noindent
The
Projection Theorem for convex sets tells us that for a nonempty closed convex
set $C$ and any $\bz\in\bX$ there is a unique vector $\hz\in C$ such that
$\dist{\bz}{C}=\norm{\bz-\hz}$, where $\hz$ is characterized by the condition that
$\bz-\hz\in N_C(\hz)$. The vector $\hz$ is called the projection of $\bz$ onto $C$ and is denoted by 
$\proj{C}(\bz)$. This implies that
$\bv:=\frac{(\bz-\proj{C}(\bz))}{\norm{\bz-\proj{C}(\bz)}}$ is the unique solution 
to the supremum problem in \eqref{eq:dist dual}.
By taking $C=\csd f(x)$, 
we obtain a dual interpretation for the direction of steepest descent.

\begin{cor}[Steepest Descent Duality] 
\cite[Theorem 2.8]{Burke83}
Let $\map{f}{\Rn}{\Re}$ and $x\in\dom{\sd f}$ be such that $f$ is lsc at $x$.
Then
\begin{equation}\label{eq:steepest descent}
\inf_{\norm{d}\le 1}\hat df(x)(d)=-\dist{0}{\csd f(x)},
\end{equation}
and the vector $\bd_x$ in \eqref{eq:sd direction} is given by
$\bd_x=- \proj{\csd f(x)}(\bz)/\norm{\proj{\csd f(x)}(\bz)}$.
\end{cor}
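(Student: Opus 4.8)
The plan is to reduce the statement to a direct application of the Minimum Norm Duality Theorem (Theorem \ref{thm:MND}) by first rewriting the regular subderivative as a support function. The hypothesis $x\in\dom{\sd f}$ means $\sd f(x)\ne\emptyset$, which by Theorem \ref{thm:basic sd} is equivalent to $\csd f(x)\ne\emptyset$, so the final assertion of Theorem \ref{thm:basic sd} applies and yields $\hat df(x)=\del^*_{\csd f(x)}$. By Theorem \ref{thm:basic sd}(2), the set $C:=\csd f(x)$ is closed and convex, hence a legitimate target for Theorem \ref{thm:MND}. Throughout I would work in $\Rn$, where the norm and its dual coincide, so the dual distance $\text{dist}_*(0\mid C)$ agrees with $\dist{0}{C}$.

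First I would substitute the identity $\hat df(x)=\del^*_{C}$ into the left-hand side of \eqref{eq:steepest descent}, turning the minimization over the unit ball into $\inf_{\norm{d}\le 1}\del^*_{C}(d)$. Applying Theorem \ref{thm:MND} in the special case $\bz=0$ then gives
\[
\inf_{\norm{d}\le 1}\del^*_{C}(d)=-\dist{0}{C},
\]
which is exactly \eqref{eq:steepest descent}. This step is essentially bookkeeping once the support-function identity is in hand.

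To identify the minimizer $\bd_x$, I would invoke the Projection Theorem discussion preceding the corollary: with $\bz=0$, the supremum problem in \eqref{eq:dist dual} has the unique solution $\bv=-\proj{C}(0)/\norm{\proj{C}(0)}$. Since the supremum in Theorem \ref{thm:MND} with $\bz=0$ equals $-\inf_{\norm{d}\le 1}\del^*_{C}(d)$, maximizing $-\del^*_{C}$ over the unit ball is the same as minimizing $\del^*_{C}=\hat df(x)$ there, so the minimizer defining $\bd_x$ in \eqref{eq:sd direction} coincides with $\bv$, which is the claimed formula (read with $\bz=0$). As an independent check, the point of $C$ attaining $\del^*_{C}\!\left(-\proj{C}(0)/\norm{\proj{C}(0)}\right)$ is $\proj{C}(0)$ itself, by the variational inequality $\ip{c-\proj{C}(0)}{-\proj{C}(0)}\ge 0$ for all $c\in C$, recovering the value $-\dist{0}{C}$.

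The only genuinely delicate point is the degenerate case $0\in C=\csd f(x)$: then $\dist{0}{C}=0$, the infimum in \eqref{eq:steepest descent} is zero, and the normalization in the formula for $\bd_x$ is undefined. Since Theorem \ref{thm:MND} is stated under $\bz\notin C$, the formula for $\bd_x$ should be understood to hold precisely when $0\notin\csd f(x)$, i.e., when $x$ is not Clarke stationary; otherwise there is no descent direction. Apart from this caveat — and a harmless note that $\del^*_{C}$ may equal $+\infty$ in directions outside the barrier cone of $C$, which does not affect the infimum since it is attained in the descent direction — the argument is a short composition of two quoted theorems, and I do not anticipate a substantive obstacle.
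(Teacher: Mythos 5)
Your proposal is correct and follows essentially the same route as the paper: invoke Theorem \ref{thm:basic sd} to get that $\csd f(x)$ is nonempty, closed, and convex with $\hat df(x)=\del^*_{\csd f(x)}$, then apply Theorem \ref{thm:MND} with $C=\csd f(x)$ and $\bz=0$, identifying the minimizer via the projection discussion preceding the corollary. Your added caveat about the degenerate case $0\in\csd f(x)$ (where the normalization is undefined and Theorem \ref{thm:MND} requires $\bz\notin C$) is a fair observation the paper leaves implicit, but it does not change the substance of the argument.
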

\begin{proof}
By Theorem \ref{thm:basic sd}, $\csd f(x)$ is a nonempty closed convex 
set with $\hat df(x)=\del^*_{\csd f(x)}$. The corollary follows 
by taking
$C=\csd f(x)$ and $\bz=0$ in Theorem \ref{thm:MND}.
\end{proof}

\subsection{Directionally Lipschitz Functions and Subdifferential Approximation}\label{sec:DL}

Rockafellar builds the notion of a directionally Lipschitzian function on that
of epi-Lipschitzian sets \cite{RTR79a}. He then establishes a useful characterization 
of directionally Lipschitzian functions through horizon subgradients 
\cite{RTR79b}.
We circumvent the epi-Lipschitzian construction and instead use the characterization given in \cite[Exercise 9.42]{RW98}
as our definition.

\begin{defn}[Directionally Lipschitzian Functions]\label{def:DL}
Suppose $\map{f}{\bR^n}{\bar\bR}$ 
is finite at $x\in\Rn$. We say $f$ is directionally
Lipschitz at $x$ if  
there is a unit vector $u$ and scalars $\eps>0$ and $M\in \R$
such that
\[
\frac{f(z+tv)-f(z)}{t}\le M\quad\forall\, v\in B_\eps(u),\, 
z\in B_\eps(x), f(z) \in B_\eps(f(x)),\, t\in(0,\eps].
\]
We say that $f$ is directionally
Lipschitz if it is directionally
Lipschitz at every point of $\Rn$.
\end{defn}

A simple characterization of directionally Lipschitz functions is 
obtained through the pointedness of the horizon cone of the 
subdifferential.

\begin{lem}\cite[Exercise 9.42(b)]{RW98}
A function $\map{f}{\bR^n}{\bar\bR}$ 
finite at $x\in\Rn$ is directionally
Lipschitz at $x$ if and only if 
$f$ is locally lsc at $x$ and the horizon 
subdifferential $\hsd f(x)$ is pointed.
\end{lem}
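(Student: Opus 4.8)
The plan is to reduce the entire statement to a single quantitative separation condition on the cone $\hsd f(x)$, produced by a unit vector $u$ that will serve as the ``tilting'' direction in Definition~\ref{def:DL}. Throughout I treat local lower semicontinuity of $f$ at $x$ as the standing regularity hypothesis: it is what makes $\hsd f(x)$ (taken, in accordance with the paper's outer-semicontinuity convention, along $f$-attentive sequences) the correct object, and in Rockafellar's epi-Lipschitzian formulation it is automatic from local closedness of $\epi f$. Under this hypothesis the substance of the lemma is the equivalence between the difference-quotient bound of Definition~\ref{def:DL} and pointedness of $\hsd f(x)$. The pivot is that, by Theorem~\ref{thm:basic sd}, $\hsd f(x)$ is pointed if and only if the closed convex cone $\hcsd f(x)$ is pointed, and, by Lemma~\ref{lem:pointed cones}, the latter holds if and only if $\intr{(\hcsd f(x))^{*}}\ne\emptyset$; moreover any unit $u\in\intr{(\hcsd f(x))^{*}}$ comes with an $\eps>0$ such that
\begin{equation}\label{eq:plan-sep}
\ip{u}{v}\le-\eps\norm{v}\qquad\text{for all }v\in\hcsd f(x)\supseteq\hsd f(x).
\end{equation}
I will show that \eqref{eq:plan-sep} is, up to the choice of constants, exactly the infinitesimal form of the directional Lipschitz inequality.

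\textbf{Directionally Lipschitz $\Rightarrow$ pointed.} Assume $f$ is directionally Lipschitz at $x$ with data $u,\eps,M$. I would take an arbitrary $v\in\hsd f(x)$, write $v=\lim_k t_kv^k$ with $x^k\to x$ $f$-attentively, $t_k\downarrow 0$, and $v^k\in\rsd f(x^k)$, and combine two inequalities. From the definition of $\rsd f(x^k)$, for any fixed $v'\in B_\eps(u)$,
\[
\frac{f(x^k+\tau v')-f(x^k)}{\tau}\ge\ip{v^k}{v'}+o(1)\qquad(\tau\downarrow 0);
\]
from Definition~\ref{def:DL}, once $x^k$ is close enough to $x$ (attentively, so that $f(x^k)\in B_\eps(f(x))$) the same quotient is $\le M$. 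Letting $\tau\downarrow 0$ gives $\ip{v^k}{v'}\le M$ for every $v'\in B_\eps(u)$, hence $\ip{v^k}{u}\le M-\eps\norm{v^k}$. Multiplying by $t_k$ and passing to the limit yields $\ip{v}{u}\le-\eps\norm{v}$, i.e. \eqref{eq:plan-sep}. Pointedness is then immediate: if $v^1+\dots+v^m=0$ with $v^i\in\hsd f(x)$, then $0=\ip{u}{\sum_i v^i}\le-\eps\sum_i\norm{v^i}$, forcing every $v^i=0$.

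\textbf{Pointed $\Rightarrow$ directionally Lipschitz.} Here I would argue by contradiction through a blow-up. Fix a unit $u\in\intr{(\hcsd f(x))^{*}}$ and the $\eps$ of \eqref{eq:plan-sep}. If \emph{no} pair $(\eps',M)$ made Definition~\ref{def:DL} hold for this $u$, then choosing $\eps'=1/k$ and $M=k$ would, for each $k$, produce $z^k\in B_{1/k}(x)$ with $f(z^k)\in B_{1/k}(f(x))$, a direction $v^k\in B_{1/k}(u)$, and $t_k\in(0,1/k]$ with
\[
\frac{f(z^k+t_kv^k)-f(z^k)}{t_k}>k .
\]
Thus $z^k\to x$ $f$-attentively, $v^k\to u$, $t_k\downarrow 0$, and the quotient diverges to $+\infty$. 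The goal is to manufacture from this a nonzero $\bar w\in\hsd f(x)$ with $\ip{\bar w}{u}\ge 0$, contradicting \eqref{eq:plan-sep}. The mechanism is to convert a large average rate of increase of $f$ along the segment $[z^k,z^k+t_kv^k]$ into a correspondingly large limiting subgradient $w^k\in\sd f(y^k)$ at an intermediate point $y^k$, with $\ip{w^k}{v^k}$ of the order of the difference quotient, so that $\norm{w^k}\to\infty$; rescaling $w^k/\norm{w^k}$ and passing to a subsequence produces $\bar w\in\hsd f(x)$ with $\norm{\bar w}=1$ and $\ip{\bar w}{u}\ge 0$.

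\textbf{Main obstacle.} The crux is precisely this conversion, which is a nonsmooth mean value inequality for the merely lsc function $f$ (of Zagrodny type): it supplies a point $y^k$ on the segment and a subgradient $w^k\in\sd f(y^k)$ with $\ip{w^k}{v^k}$ bounded below by the difference quotient. The delicate point is to keep $y^k\to x$ \emph{$f$-attentively}, i.e. $f(y^k)\to f(x)$, so that the rescaled limit genuinely lands in $\hsd f(x)$; since a large difference quotient can be accompanied by a large increase of $f$, one must first localize to an initial portion of the segment on which $f$ has risen only slightly before invoking the mean value inequality. An alternative that packages this uniformity automatically is the epigraphical route: \eqref{eq:plan-sep} makes the Clarke normal cone $\bar N_{\epi f}(x,f(x))$ pointed (its only candidate lines are horizontal, and those are excluded by pointedness of $\hcsd f(x)$), whence $\intr{\widehat T_{\epi f}(x,f(x))}=\intr{(\bar N_{\epi f}(x,f(x)))^{*}}\ne\emptyset$ by Lemma~\ref{lem:pointed cones}; an interior regular tangent direction $(u,\mu)$ then yields the directional Lipschitz inclusion directly through the uniform $f$-attentive sequential description of regular tangent vectors, with $M=\mu+\eps$. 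Finally, the locally Lipschitz case is recovered as the extreme instance $\hsd f(x)=\{0\}$, which is trivially pointed and for which every direction is admissible.
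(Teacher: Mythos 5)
The first thing to note is that the paper contains no proof of this lemma to compare against: it is imported wholesale as \cite[Exercise 9.42(b)]{RW98}, whose own argument runs through epi-Lipschitzian sets and the interior of the regular tangent cone of $\epi{f}$. Measured on its own terms, your forward direction is correct and complete: deriving the separation inequality $\ip{u}{v}\le-\eps\norm{v}$ on $\hsd f(x)$ from the regular-subgradient inequality combined with the difference-quotient bound, and then killing cancellations, is exactly right. Your decision to treat local lower semicontinuity as a standing hypothesis rather than as a conclusion of the forward implication is also the correct reading of the statement: the difference-quotient condition of Definition~\ref{def:DL} alone cannot imply lsc (for a function with an isolated high value at $x$, the localization $f(z)\in B_\eps(f(x))$ makes the condition hold vacuously), so the ``if and only if'' is only sensible with lsc in force throughout.

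The genuine gap is in the reverse direction, at exactly the spot you flagged, but the repair is simpler than either remedy you propose. The problem is that Theorem~\ref{thm:amvt} bounds $f(\tx)$ only by $f(z^k)+\abs{r}+\eps$, and with $r\approx k\,t_k$ this need not tend to $f(x)$, since $t_k$ may be of order $1/k$; your suggested spatial localization does not obviously help (lsc permits $f$ to jump up immediately along the segment, leaving no initial portion of small increase). The fix is instead to choose $r$ more modestly: set $r_k:=\tfrac12\min\{\sqrt{t_k},\,k\,t_k\}$. Then $r_k<f(z^k+t_kv^k)-f(z^k)$, so Theorem~\ref{thm:amvt} applies (after the harmless localization needed because $f$ is only locally lsc); since $\abs{r_k}\le\tfrac12\sqrt{t_k}\to0$, taking $\eps_k\downarrow 0$ gives $f(\tx^k)\le f(z^k)+\abs{r_k}+\eps_k\to f(x)$, while lsc gives the matching lower bound, so the points $\tx^k$ converge to $x$ \emph{$f$-attentively}; and $\ip{\tv^k}{v^k}>r_k/t_k\ge\tfrac12\sqrt{k}\to\infty$, so $\norm{\tv^k}\to\infty$ and the unit rescalings $\tv^k/\norm{\tv^k}$ cluster at some $\bar w\in\hsd f(x)$ with $\norm{\bar w}=1$ and $\ip{\bar w}{u}\ge0$, contradicting the separation inequality. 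With this one-line change your mean-value route becomes a complete, self-contained proof using only tools the paper itself states (Theorem~\ref{thm:basic sd}(4), Lemma~\ref{lem:pointed cones}, Theorem~\ref{thm:amvt}), which is arguably nicer than the cited source's argument. Your fallback epigraphical route is essentially the proof behind \cite[Exercise 9.42]{RW98} and \cite{RTR79b}; its first two steps are fine (pointedness of $\cnc{(x,f(x))}{\epi{f}}$ because epigraphical normal vectors have nonpositive final component, then nonempty interior of the regular tangent cone by polarity via Lemma~\ref{lem:pointed cones}), but its final step --- converting an interior regular tangent into the uniform difference-quotient bound --- is the entire analytic content of that direction and is invoked without statement or citation, so as written that route does not stand on its own.
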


In particular, locally Lipschitz functions are directionally Lipschitz.
In \cite[Exercize 9.42(c)]{RW98}, Rockafellar and Wets show that
a function $\map{f}{\Rn}{\bar\R}$ that is finite and locally lsc at $\bx$
is directionally Lipschitz at $\bx$ if there is a convex cone 
$K\subset\Rn$ having nonempty interior such that $f$ is 
$K$-nonincreasing, i.e. $f(x+w)\le f(x)$ for all $x\in\Rn$ and $w\in K$.
In \cite[Theorem 6]{BBL03}, it is shown that if $\intr K\ne\emptyset$,
then $K$-monotone functions
($K$-nonincreasing or $K$-nondecreasing) are continuous and 
almost every where differentiable. These authors also
establish the following characterization of continuous 
directionally Lipschitz functions in terms of monotonicity.

\begin{prop}\cite[Proposition 8]{BBL03}
A continuous function $\map{f}{\bR^n}{\bar\bR}$ is
directionally Lipschitz at $x$ if an only if it is locally representable near $x$
as $f=g+l$ where $g$ is monotone with respect to a convex cone
with interior and $l$ is linear.
\end{prop}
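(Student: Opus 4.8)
The plan is to establish the two implications separately, working directly from the difference-quotient form of Definition~\ref{def:DL}. \emph{Sufficiency first.} Suppose that on a neighborhood of $x$ one has $f=g+l$ with $l(z)=\ip{a}{z}$ linear and $g$ monotone with respect to a convex cone $K$ with $\intr{K}\ne\emptyset$; after replacing $K$ by $-K$ if necessary I may assume $g$ is $K$-nonincreasing. I would fix a unit vector $u\in\intr{K}$ and $\eps>0$ with $B_\eps(u)\subset K$, so that $tv\in K$ and hence $g(z+tv)\le g(z)$ for every $v\in B_\eps(u)$ and small $t>0$. Then
\[
\frac{f(z+tv)-f(z)}{t}=\frac{g(z+tv)-g(z)}{t}+l(v)\le \sup_{v\in B_\eps(u)}\ip{a}{v}=:M<\infty,
\]
and shrinking $\eps$ so that $B_\eps(x)$ lies in the region where $f=g+l$ shows $f$ satisfies Definition~\ref{def:DL}. (Alternatively, this is \cite[Exercise 9.42(c)]{RW98} applied to $g$, together with the invariance $\hsd f(x)=\hsd g(x)$ under addition of the smooth function $l$.)

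\emph{Necessity} is the substantive direction, and I would prove it by constructing the tilt explicitly. Let $u$, $\eps\in(0,1)$, and $M$ be as in Definition~\ref{def:DL}. I would set $l(z):=c\ip{u}{z}$ with $c$ chosen so that $c(1-\eps)\ge M$, and let $K$ be the convex cone generated by the ball $B_\eps(u)$, namely $K:=\set{tv}{t\ge 0,\ v\in B_\eps(u)}$. The decisive estimate is that $l(v)=c\ip{u}{v}\ge c(1-\eps)\ge M$ for every $v\in B_\eps(u)$, since $\ip{u}{v}\ge 1-\norm{v-u}>1-\eps$. Setting $g:=f-l$, for any $w=tv\in K$ with $t$ small one then obtains
\[
g(z+w)-g(z)=\big(f(z+tv)-f(z)\big)-t\,l(v)\le tM-t\,l(v)\le 0,
\]
so $g$ is $K$-nonincreasing near $x$; since $g$ is continuous and $l$ is linear, $f=g+l$ is the desired local representation.

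The main obstacle is bookkeeping the locality. The bound in Definition~\ref{def:DL} holds only for steps $t\in(0,\eps]$ with $z\in B_\eps(x)$ and $f(z)\in B_\eps(f(x))$, so the monotonicity of $g$ is only local; I would therefore record the decomposition on a fixed small ball $U\ni x$ and note that any $z,z'\in U$ with $z'-z\in K$ satisfy $z'-z=tv$ for some $v\in B_\eps(u)$ and, because $\eps<1$ forces $\norm{v}>1-\eps$, with $t\le\norm{z'-z}/(1-\eps)$ small, so a single application of the difference-quotient bound suffices once $U$ is small enough that $f(z)\in B_\eps(f(x))$ throughout. One must also verify that $K$ is a genuine convex cone with nonempty interior, which holds because it is the conical hull of the convex body $B_\eps(u)$ that (as $\eps<1$) is bounded away from the origin, and that $g=f-l$ remains continuous and finite near $x$. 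With these points in place the equivalence follows.
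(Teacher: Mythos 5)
The paper never proves this proposition: it is quoted directly from \cite[Proposition 8]{BBL03}, so there is no internal argument to compare yours against. Your blind proof is correct and self-contained, working straight from Definition \ref{def:DL}, which is a genuine addition rather than a variant. The sufficiency half is essentially the remark the paper already makes via \cite[Exercise 9.42(c)]{RW98}: the cone-monotone part contributes a nonpositive difference quotient and the linear part a bounded one. The necessity half is the substantive direction, and your explicit tilt $l(z)=c\ip{u}{z}$ together with the cone $K$ generated by $B_\eps(u)$ makes rigorous the ``tilt the epigraph'' heuristic that the introduction attributes to Rockafellar; the locality bookkeeping (writing $z'-z=tv$ with $t\le\norm{z'-z}/(1-\eps)$, shrinking the neighborhood so that $t\le\eps$ and, by continuity of $f$, $f(z)\in B_\eps(f(x))$) is handled correctly. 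Two small points deserve an explicit line: you need $c\ge 0$, e.g. $c=\max\{0,M\}/(1-\eps)$, since the key estimate $c\ip{u}{v}\ge c(1-\eps)$ reverses sign for $c<0$; and in the sufficiency half the shrunken ball must be small enough that $z+tv$, not just $z$, remains in the region where $f=g+l$ and where the monotonicity of $g$ is available. Both are trivial repairs. Note finally that your construction yields monotonicity of $g$ only near $x$, which is the correct reading of ``locally representable'' in the statement; a globally $K$-monotone $g$ would require an additional extension argument that neither the statement nor your proof needs.
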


\noindent
The pointedness of $\hsd f(x)$, or equivalently, $\hcsd f(x)$, is also
related to the continuity of
the regular subderivative $\hat df(x)$.

\begin{lem}[Continuity of $\hat d f(x)$]\label{lem:interior equiv}
Suppose $\map{f}{\bR^n}{\bar\bR}$ is finite 
at $x\in\Rn$ with $\sd f(\bx)\ne\emptyset$, then 
$\hat d f(x)$ is continuous on 
\[
\intr[(\hcsd f(\bx))^*]=\intr{\left[ \dom{\hat d f(\bx)(\cdot)}\right]}.
\]
\end{lem}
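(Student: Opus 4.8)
The plan is to recognize $\hat d f(\bx)$ as a support function and then to invoke two standard facts: that the effective domain of a support function is the barrier cone of the underlying set, and that a proper convex function is continuous on the interior of its effective domain. Since $\sd f(\bx)\ne\emptyset$, Theorem \ref{thm:basic sd} gives simultaneously $\csd f(\bx)\ne\emptyset$, the representation $\hat d f(\bx)=\del^*_{\csd f(\bx)}$, and the horizon identity $\hcsd f(\bx)=(\csd f(\bx))^\infty$. Writing $C:=\csd f(\bx)$, which is a nonempty closed convex set, this identifies $\hat d f(\bx)$ with the support function $\del^*_C$. Consequently $\dom{\hat d f(\bx)(\cdot)}=\dom{\del^*_C}=\barrier C$ directly from the definition of the barrier cone.

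Next I would establish the displayed set identity. Applying Lemma \ref{lem:nbh cones} to $C$ and using $C^\infty=\hcsd f(\bx)$ yields $\cl{\barrier C}=(C^\infty)^*=(\hcsd f(\bx))^*$. Taking interiors and invoking the elementary convex-analysis fact that $\intr{(\cl D)}=\intr{D}$ for any convex set $D$ (here $D=\barrier C$ is a convex cone), I obtain
\[
\intr{[(\hcsd f(\bx))^*]}=\intr{[\cl{\barrier C}]}=\intr{[\barrier C]}=\intr{[\dom{\hat d f(\bx)(\cdot)}]},
\]
which is precisely the claimed equality of sets.

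Finally, continuity follows at once: $\hat d f(\bx)=\del^*_C$ is a proper convex function on $\Rn$ (support functions of nonempty sets are sublinear, hence convex, and are proper since they are finite at $0$ and never equal $-\infty$), and a proper convex function is continuous—indeed locally Lipschitz—on the interior of its effective domain. Hence $\hat d f(\bx)$ is continuous on $\intr{[\dom{\hat d f(\bx)(\cdot)}]}$, which by the identity just proved equals $\intr{[(\hcsd f(\bx))^*]}$. I expect the only delicate point, and thus the main obstacle, to be the set-theoretic bookkeeping around interiors and closures: one must verify that passing to the interior commutes with closure for the convex cone $\barrier C$, and that the barrier cone itself—rather than its closure—is the genuine effective domain of $\del^*_C$. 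Both are standard, so no essentially hard step arises; the substantive content is the reduction to a support function supplied by Theorem \ref{thm:basic sd} together with the cone identities of Lemma \ref{lem:nbh cones}. Note also that when $\hcsd f(\bx)$ fails to be pointed the set in question is empty and the assertion is vacuous, so no additional hypothesis beyond $\sd f(\bx)\ne\emptyset$ is needed.
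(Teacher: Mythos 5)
Your proof is correct and follows essentially the same route as the paper's: both reduce $\hat d f(\bx)$ to the support function $\del^*_{\csd f(\bx)}$ via Theorem \ref{thm:basic sd}, identify its domain with the barrier cone, invoke Lemma \ref{lem:nbh cones} for the polar identity $(\hcsd f(\bx))^* = \cl{\barrier \csd f(\bx)}$, commute interior with closure for convex sets, and conclude by continuity of a convex function on the interior of its domain. Your additional remarks on properness and the vacuous non-pointed case are harmless refinements of the same argument.
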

\begin{proof}
By Theorem \ref{thm:basic sd}, Lemma \ref{lem:nbh cones} and the closure properties of
convex sets, 
we have
\[\baligned
\intr[(\hcsd f(\bx))^*]&=\intr[(\csd f(\bx)^\infty)^*]
=\intr[\cl{\barrier \csd f(\bx)}]
\\ &=\intr[\barrier \csd f(\bx)]
=\intr[\dom{\del^*_{\csd f(\bx)}}]
\\ &=\intr{\left[ \dom{\hat d f(\bx)(\cdot)}\right]}.
\ealigned
\]
Since $\hat d f(\bx)$ is convex,
it is continuous on the interior of its domain.
\end{proof}
\noindent
In general, directionally Lipschitzian functions need not be locally Lipschitz or
even continuous at $\bx$. For example, for every $\eta\ge 0$, the function
\begin{equation}\label{eq:ex1}
f(x):=
\begin{cases}
x^{1/3}-\eta,&x\le 0,\\
x^{1/3}+\eta,&x>0,
\end{cases}
\end{equation}
is directionally Lipschitz at $\bx=0$ and continuous at $\bx=0$ if and only if
$\eta=0$.
Nonetheless, in \cite[Corollary 6.1]{BurkLewiOver02a} it is shown that when
$\hsd f(x)$ is pointed, then $\csd f(x)$
can be locally approximated  
by nearby gradients. We offer a slight improvement of this result that is useful to
our discussion. We begin with the following technical lemma.

\begin{lem}[Limits of Gradients]\label{lem:osc G}
Let $\map{f}{\Rn}{\bR}$ and $\bx\in\dom{f}$ be such that,
$\hsd f(\bx)$ is pointed, $f$ is continuous on an
open set $\cV$ containing $\bx$ and differentiable on an 
open set $\cQ\subset \cV$ of full measure in $\cV$. For each $x\in\cV$ and $\del>0$ such that 
$\bx+\del\bB\subset \cV$ set
\bequation\label{eq:G}
G_{\del}(x):=\cconv{\grad f((x+\del \uball)\cap \cQ)}.
\eequation
Let $x^k\rightarrow\bx$ and $\del_k\downarrow 0$ 
with $x^k+\del_k\bB\subset \cV$ for all $k\in\bN$. 
\begin{enumerate}
\item[(a)] If $w^k\rightarrow \bw$ with $w^k\in G_{\del_k}(x^k)$ for all $k\in\bN$,
then $\bw\in\csd f(\bx)$.
\item[(b)] If $v^k\rightarrow \bv$ with $v^k\in G_{\del_k}(x^k)^\infty$ for all $k\in\bN$,
then $\bv\in \hcsd f(\bx)$.
\end{enumerate}
\end{lem}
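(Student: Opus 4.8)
The plan is to reduce the closed convex hull to finite convex combinations of gradients via Carath\'eodory's theorem, to identify the (possibly rescaled) limits of those gradients as elements of $\sd f(\bx)$ or $\hsd f(\bx)$, and then to reassemble everything using the representations $\csd f(\bx)=\conv\sd f(\bx)+\conv\hsd f(\bx)$ and $\hcsd f(\bx)=\conv\hsd f(\bx)$ from Theorem~\ref{thm:basic sd}, which are available precisely because $\hsd f(\bx)$ is pointed. Throughout I use that $f$-attentive convergence to $\bx$ is automatic here, since $f$ is continuous on $\cV\ni\bx$, and that $\grad f(y)\in\rsd f(y)\subset\sd f(y)$ at every $y\in\cQ$.

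For (a) I would first write $w^k$ as a limit of finite convex combinations of points of $\grad f((x^k+\del_k\bB)\cap\cQ)$ and, by Carath\'eodory, replace it up to an error of size $1/k$ by $\sum_{i=1}^{n+1}\lam_i^k g_i^k$, where $\lam^k\in\Del_n$, $g_i^k=\grad f(y_i^k)$, and $y_i^k\in(x^k+\del_k\bB)\cap\cQ$; note $y_i^k\to\bx$. Passing to subsequences, I split $\{1,\dots,n+1\}$ into the indices $I_b$ where $\{g_i^k\}_k$ is bounded and $I_u$ where $\norm{g_i^k}\to\infty$, and arrange $\lam^k\to\lam\in\Del_n$, $g_i^k\to\bg_i$ for $i\in I_b$, and $g_i^k/\norm{g_i^k}\to u_i$ for $i\in I_u$. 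Outer semicontinuity of $\sd f$ gives $\bg_i\in\sd f(\bx)$, while the horizon-subdifferential definition gives each unit vector $u_i\in\hsd f(\bx)$.

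The crux, and the step I expect to be the main obstacle, is showing that the scalars $s_i^k:=\lam_i^k\norm{g_i^k}$ for $i\in I_u$ remain bounded, i.e.\ that the weighted gradients cannot diverge through cancellation. If $S^k:=\sum_{i\in I_u}s_i^k\to\infty$ along a subsequence, then dividing the convergent quantity $\sum_i\lam_i^k g_i^k$ by $S^k$ forces $\sum_{i\in I_u}(s_i^k/S^k)(g_i^k/\norm{g_i^k})\to 0$; this is a convex combination of the $u_i$, so in the limit it exhibits a nontrivial nonnegative combination of elements of $\hsd f(\bx)$ summing to $0$, contradicting pointedness of $\hcsd f(\bx)=\conv\hsd f(\bx)$. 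With the $s_i^k$ bounded I then obtain $\lam_i^k\to0$ for $i\in I_u$ (so $\sum_{i\in I_b}\lam_i=1$, which forces $I_b\ne\emptyset$ and hence $\sd f(\bx)\ne\emptyset$) and $\lam_i^k g_i^k=s_i^k(g_i^k/\norm{g_i^k})\to s_i u_i\in\hsd f(\bx)$. Taking limits yields $\bw=\sum_{i\in I_b}\lam_i\bg_i+\sum_{i\in I_u}s_i u_i\in\conv\sd f(\bx)+\conv\hsd f(\bx)=\csd f(\bx)$.

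For (b) I would run the same machinery in recession mode. Unwinding $v^k\in G_{\del_k}(x^k)^\infty$ through the definition of the horizon cone and a diagonal selection produces $\tau_k\downarrow0$ and $c^k\in G_{\del_k}(x^k)$ with $\tau_k c^k\to\bv$; approximating $c^k$ by a finite convex combination of gradients as above gives $\bv=\lim\sum_i\sigma_i^k g_i^k$ with $\sigma_i^k:=\tau_k\lam_i^k\ge0$ and $\sum_i\sigma_i^k=\tau_k\to0$. The identical pointedness argument, now normalizing by $\sum_i\norm{\sigma_i^k g_i^k}$ should it be unbounded, shows each $\sigma_i^k g_i^k$ stays bounded, so along a subsequence $\sigma_i^k g_i^k\to h_i$; since $\sigma_i^k\downarrow0$ (after passing to a monotone subsequence) with $g_i^k\in\rsd f(y_i^k)$ and $y_i^k\to\bx$, each $h_i\in\hsd f(\bx)$, and therefore $\bv=\sum_i h_i\in\conv\hsd f(\bx)=\hcsd f(\bx)$.
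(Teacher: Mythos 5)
Your proposal is correct and follows essentially the same route as the paper's proof: Carath\'eodory's theorem plus a diagonal extraction, pointedness of $\hsd f(\bx)$ to rule out cancellation among diverging weighted gradients, and the decomposition $\csd f(\bx)=\conv\sd f(\bx)+\conv\hsd f(\bx)$, $\hcsd f(\bx)=\conv\hsd f(\bx)$ from Theorem~\ref{thm:basic sd}(4). The only difference is organizational: the paper writes out part (b) in full and notes that (a) follows the same but simpler pattern, whereas you detail (a) via the bounded/unbounded index split and then rerun the machinery in recession mode for (b).
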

\begin{proof}
We only show (b) 
since the proof of (a)
follows the same
pattern but is significantly simpler.
By Carath\'eodory's Theorem, for each $k\in\bN$,
there exist sequences  
$\{(x^{kj1},\dots,x^{kj(n+1)})\,\mid\, j\in\bN\}\subset\bigtimes_{i=1}^{n+1}\Rn,\ 
\{\alf^{kj}\in\Del_n\,\mid\, j\in\bN\}$, 
and $\{t_{kj}\,\mid\, j\in\bN\}\subset\bR_+$ such that $t_{kj}\downarrow_j0$,
$x^{kji}\in (\bx+\del_k \uball)\cap \cQ\ ((j,i)\in\bN\times\{1,2,...,n+1\})$, and 
$t_{kj}\sum_{i=1}^{n+1}\alf^{kj}_i\nabla f(x^{kji})\overset{j}{\rightarrow}
v^k$.
Choose $\eps_k\downarrow 0$. For each $k\in\bN$, let $j_k\in\bN$ be such
that $t_{kj_k}<\eps_k$ and
$\norm{v^k-t_{kj_k}\sum_{i=1}^{n+1}\alf^{kj_k}_i\nabla f(x^{kj_ki})}\le \eps_k$.
For each $k\in\bN$, set 
$(\bx^{k1},\dots,\bx^{k(n+1)}):=(x^{kj_k1},\dots,x^{kj_k(n+1)})$,
$\balf^k:=\alf^{kj_k}$
and $\bt_k=t_{kj_k}$ so that 
$(\bx^{k1},\dots,\bx^{k(n+1)})\rightarrow(\bx,\dots,\bx)$, $\bt_k\downarrow 0$,
and
$\bt_{k}\sum_{i=1}^{n+1}\balf^{k}_i\nabla f(\bx^{ki})\rightarrow v$.
By compactness, we can assume that 
$\balf^k\rightarrow \balf\in\Del_n$.

Let us first suppose that the sequence 
$\{\bt_{k}(\nabla f(\bx^{k1},\dots,\nabla f(\bx^{k(n+1)}))\}$
is unbounded. If $\nu_k$ denotes the norm of the $k$th member
of this sequence, we can assume that $\nu_k\uparrow\infty$.
Then, with no loss in generality, there exists 
$(w^1,\dots,w^{(n+1)})$
such that 
\[
(\tilde t_{k1}\nabla f(\bx^{k1}),\dots,\tilde t_{k(n+1)}\nabla f(\bx^{k(n+1)}))
\rightarrow (w^1,\dots,w^{(n+1)})\ne(0,\dots,0),
\] 
where 
$\tilde t_{ki}:=(\bt_{k}\balf^{k}_i)/\nu_k$ for $i=1,\dots,(n+1)$ and $k\in\bN$.
Since $\tilde t_{ki}\downarrow 0$ and $\nabla f(\bx^{ki})\in\rsd f(\bx^{ki})$
for $i=1,\dots,n+1$ and $k\in\bN$, we have $w^i\in\hsd f(\bx)$ for 
$i=1,\dots,(n+1)$ (see \eqref{eq:def hsd}). But then 
$0=\sum_{i=1}w^i$ with $(w^1,\dots,w^{(n+1)})\ne(0,\dots,0)$ which
contradicts the pointedness of $\hsd f(\bx)$.
\\
Hence we can assume that the sequence 
$\{\bt_{k}(\nabla f(\bx^{k1}),\dots,\nabla f(\bx^{k(n+1)}))\}$
is bounded. Therefore, we may assume that there exist
$w^i\in\hsd f(\bx)$ such that $\bt_{k}\nabla f(\bx^{ki})\rightarrow w^i$
for $i=1,\dots,(n+1)$. Consequently, by Theorem \ref{thm:basic sd},
$v=\sum_{i=1}^{n+1}\balf_iw^i\in \conv \hsd f(\bx)=\hcsd f(\bx)$ proving the result.
\end{proof}

 \noindent
The outer semi-continuity of $\sd f$ and $\hsd f$ along $f$-attentive
 sequences \cite[Proposition 8.7]{RW98}
 implies that 
 pointedness is a local property and that
 the pointedness of  $\hsd f$ and $G_\del$
 are related. 
%

\begin{lem}[Pointedness of $\hsd f(x)$ is a local property]
\label{lem:loc pointedness}
Let $\map{f}{\Rn}{\Re}$ be continuous near $\bx\in\Rn$, and suppose
$\hsd f(\bx)$ is pointed. 
Let $\cQ$ be a full measure subset of a neighborhood of $\bx$ consisting of points where 
$f$ is differentiable, and let $G_\del(x)$ be as in \eqref{eq:G}. 
Then the following statements hold.
\begin{enumerate}
\item[(i)] 
There is an $\eps>0$ such that $\hsd f(x)$ is pointed on 
$(\bx+\eps\bB)\cap\dom{f}$.
\item[(ii)]
There is a $\bdel>0$ such that 
$G_\del(\bx)^\infty$ is pointed for all $\del\in (0,\bdel]$.
\item[(iii)]
There exist $\eps,\ \bdel>0$ such that both $\hsd f(x)$
and $G_\del(x)^\infty$ are pointed for all $x\in \bx+\eps\bB$ and $0<\del<\bdel$.
\end{enumerate}
\end{lem}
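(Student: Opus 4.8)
The plan is to prove all three parts by the same soft device: a contradiction argument that combines compactness and normalization with an outer-semicontinuity (osc) property of the relevant cone-valued map. In every case I convert non-pointedness of a closed cone $K$, via Lemma~\ref{lem:pointed cones}, into the existence of a single nonzero vector $w$ with both $w\in K$ and $-w\in K$; since all the cones in question are closed, I may rescale such a $w$ to unit norm and keep both it and its negative in $K$. The osc engine for part (i) is the outer semicontinuity of $\hsd f$ along $f$-attentive sequences (Theorem~\ref{thm:basic sd}(1)), while for parts (ii) and (iii) it is Lemma~\ref{lem:osc G}(b).

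For part (i), suppose the conclusion fails. Then there is a sequence $x^k\to\bx$ with $x^k\in\dom{f}$ for which $\hsd f(x^k)$ is not pointed. For each $k$, Lemma~\ref{lem:pointed cones} produces $w^k\in\hsd f(x^k)$ with $\norm{w^k}=1$ and $-w^k\in\hsd f(x^k)$. Passing to a subsequence I may assume $w^k\to\bw$ with $\norm{\bw}=1$. Because $f$ is continuous near $\bx$, the sequence $\{x^k\}$ is $f$-attentive, so osc of $\hsd f$ forces both $\bw\in\hsd f(\bx)$ and $-\bw\in\hsd f(\bx)$, contradicting pointedness of $\hsd f(\bx)$. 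For small $\eps$ one has $\bx+\eps\bB\subset\dom{f}$ by continuity, so the restriction to $\dom{f}$ is automatic.

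Parts (ii) and (iii) run the same argument with $\hsd f$ replaced by the horizon cone $G_\del(\cdot)^\infty$ of the closed convex set $G_\del(\cdot)$. For (ii), fix the base point at $\bx$: if the claim fails there is $\del_k\downarrow 0$ with $G_{\del_k}(\bx)^\infty$ not pointed, yielding unit vectors $v^k$ with $\pm v^k\in G_{\del_k}(\bx)^\infty$. Applying Lemma~\ref{lem:osc G}(b) along the constant sequence $x^k\equiv\bx$ (legitimate once $\del_k$ is small enough that $\bx+\del_k\bB$ lies in the neighborhood) to the two sequences $v^k\to\bv$ and $-v^k\to-\bv$ gives $\bv,-\bv\in\hcsd f(\bx)$ with $\norm{\bv}=1$; this contradicts pointedness of $\hcsd f(\bx)$, which by Theorem~\ref{thm:basic sd}(4) is equivalent to the hypothesized pointedness of $\hsd f(\bx)$. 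Part (iii) is the identical contradiction with the base points allowed to move: taking $\eps=\bdel=1/k$, a failure supplies $x^k\to\bx$ and $\del_k\downarrow 0$ with $G_{\del_k}(x^k)^\infty$ not pointed, and Lemma~\ref{lem:osc G}(b) again delivers $\pm\bv\in\hcsd f(\bx)$, a contradiction. Intersecting the neighborhood produced here with the one furnished by part (i) yields a single pair $\eps,\bdel>0$ on which both cones are pointed.

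The analytic substance is already carried by Lemma~\ref{lem:osc G}, so the remaining work is essentially bookkeeping; the points needing care are (a) verifying $f$-attentiveness, which is exactly where continuity of $f$ near $\bx$ enters, (b) exploiting the cone structure so the offending vectors may be normalized and their negatives retained, and (c) keeping the base-point behavior straight---constant at $\bx$ for (ii), convergent to $\bx$ for (iii). I expect no genuine obstacle beyond this, since the hard step---that limits of horizon directions of the $G_\del$'s land in $\hcsd f(\bx)$---has been established upstream.
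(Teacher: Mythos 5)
Your proof is correct and takes essentially the same route as the paper's: a contradiction argument that extracts normalized antipodal pairs from the non-pointed cones, passes to limits by compactness, and invokes outer semicontinuity of $\hsd f$ along $f$-attentive sequences (Theorem \ref{thm:basic sd}) for part (i) and Lemma \ref{lem:osc G}(b) for the horizon cones $G_\del(\cdot)^\infty$ in parts (ii) and (iii). The only difference is organizational---the paper proves (iii) directly by a single contradiction split into two cases and declares (i), (ii) analogous, whereas you prove (i) and (ii) first and get (iii) by a moving-base-point version of (ii) intersected with the neighborhood from (i)---which is bookkeeping, not a different argument.
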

\begin{proof}
The statements (i)-(iii) are proved in essentially the manner. Therefore we only
prove (iii). If the result is false, then there exist sequences 
$\{x^k\}\subset\dom{\sd f}$ with $x^k\rightarrow \bx$ and $\del_k\downarrow 0$
such that either $\hsd f(x^k)$ is not pointed for all $k=1,2,\dots$ or
$G_{\del_k}(x^k)^\infty$ is not pointed for all $k=1,2,\dots$. Let us first suppose that
the cone $\hsd f(x^k)$ is not pointed for all $k=1,2,\dots$. Then there exist
$v^{k1},\, v^{k2}\in \hsd f(x^k)$ such that $v^{k1}+v^{k2}=0$ and
$\norm{v^{k1}}+\norm{v^{k2}}=1$ for all $k$. 
Compactness and the osc of $\hsd f$ at $\bx$ (Theorem \ref{thm:basic sd})
tells us that we can also assume there exist $\bv^1,\bv^2\in\hsd f(\bx)$ with
$(v^{k1}, v^{k2})\rightarrow (\bv^1,\bv^2)$, $\bv^1+\bv^2=0$ and 
$\norm{v^{1}}+\norm{v^{2}}=1$. But this contradicts the pointedness of 
$\hsd f(\bx)$. Next suppose that the cone
$G_{\del_k}(x^k)^\infty$ is not pointed for all $k=1,2,\dots$.
Again, there exist
$v^{k1},\, v^{k2}\in G_{\del_k}(x^k)^\infty$ such that $v^{k1}+v^{k2}=0$ and
$\norm{v^{k1}}+\norm{v^{k2}}=1$ for all $k\in\bN$. Compactness
tells us that 
we can assume there exist $\bv^1,\bv^2$
with $(v^{k1}, v^{k2})\rightarrow (\bv^1,\bv^2)$, $\bv^1+\bv^2=0$ and 
$\norm{v^{1}}+\norm{v^{2}}=1$. But Lemma \ref{lem:osc G}(b) tells us that
$v^1,v^2\in\hcsd f(\bx)$ which contradicts the pointedness of $\hcsd f(\bx)$.
\end{proof}

In the next lemma we establish a relationship between regular subgradients
and gradients at near by points. The lemma extracts a portion of the proof of 
\cite[Theorem 5.2]{BurkLewiOver02a} which we will use to extend  
\cite[Corollary 6.1]{BurkLewiOver02a}.

\begin{lem}[Gradients and Regular Subgradients]\label{lem:grad and rsg}
Let $\map{f}{\Rn}{\R}$ be continuous on $B_\del(\bx)$ for 
$\bx\in\Rn$ and $\del>0$, and assume that $\cQ$ is a full measure subset of 
$B_\del(\bx)$ consisting
of points where $f$ is differentiable. If either $f$ is absolutely continuous on line
segments in $B_\del(\bx)$ or $\cQ$ is open and $f$ is continuously differentiable on $\cQ$,
then
\(
\rsd f(\bx)\subset G_\del(\bx). 
\)
\end{lem}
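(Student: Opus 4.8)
The plan is to argue by contradiction, combining a separation argument with an integration-along-segments estimate. Suppose $v\in\rsd f(\bx)$ but $v\notin G_\del(\bx)$. Since $G_\del(\bx)=\cconv\grad f((\bx+\del\uball)\cap\cQ)$ is closed and convex, the projection $\proj{G_\del(\bx)}(v)$ exists, and $d:=v-\proj{G_\del(\bx)}(v)\neq 0$ satisfies $\ip{d}{g-v}\le-\norm{d}^2$ for every $g\in G_\del(\bx)$. Normalizing $d$ to unit length and setting $\eps:=\norm{d}>0$, and recalling that $\grad f(y)\in G_\del(\bx)$ for every $y\in(\bx+\del\uball)\cap\cQ$, this reads
\[
\ip{\grad f(y)}{d}\le\ip{v}{d}-\eps\qquad\text{for all }y\in(\bx+\del\uball)\cap\cQ.
\]
The crucial feature is that this bound is \emph{uniform} over the whole $\del$-ball, which is exactly what will let me perturb the test segment in the estimate below.

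On the one hand, applying the definition of $\rsd f(\bx)$ to $z=\bx+td$ gives, as $t\downarrow 0$,
\[
f(\bx+td)-f(\bx)\ge t\,\ip{v}{d}+o(t),
\]
a lower bound on the growth of $f$ in the direction $d$. On the other hand, I will establish the reverse estimate $f(\bx+td)-f(\bx)\le t(\ip{v}{d}-\eps)$ for all small $t>0$. Combining the two displays forces $\eps\le o(1)$ as $t\downarrow 0$, the desired contradiction, so no such $v$ can exist.

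To obtain the reverse estimate I integrate $\ip{\grad f}{d}$ along short segments parallel to $d$. Fix small $\rho,t>0$ with $\sqrt{\rho^2+t^2}\le\del$, and for each $a$ in the disk $\rho\uball\cap d^\perp$ consider the segment $s\mapsto\bx+a+sd$, $s\in[0,t]$, whose points all lie in $\bx+\del\uball$. Because $\cQ$ has full measure, Fubini's theorem shows that for almost every such $a$ the segment meets the null set $(\bx+\del\uball)\setminus\cQ$ in a set of one-dimensional measure zero; along any such \emph{good} segment, $f$ is differentiable at almost every point, with line-derivative $\ip{\grad f(\bx+a+sd)}{d}\le\ip{v}{d}-\eps$. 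The structural hypothesis — either absolute continuity of $f$ on segments, or openness of $\cQ$ together with $f\in C^1(\cQ)$ — is precisely what allows me to recover $f$ from this a.e. derivative bound and conclude
\[
f(\bx+a+td)-f(\bx+a)\le t(\ip{v}{d}-\eps)
\]
for almost every $a$. Letting $a\to 0$ through good directions and invoking the continuity of $f$ at $\bx$ and at $\bx+td$ then yields the reverse estimate for the unperturbed segment.

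The main obstacle is exactly this integration step. Continuity plus almost-everywhere differentiability does not suffice to recover a function from its derivative (a Cantor-type function is continuous with vanishing derivative off a null set yet is nonconstant), so some absolute-continuity-type control is indispensable; the two alternative hypotheses each supply it in their respective settings. The accompanying Fubini argument is what transfers the gradient bound — available only on the full-measure set $\cQ$, and a priori on no individual line — to honest one-dimensional integrals along almost all of the perturbed segments. Beyond this, I would still check the routine measurability of the relevant sets and the uniformity of the $o(t)$ term, but these are straightforward once the segment estimate is secured.
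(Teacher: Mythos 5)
Your unified argument is sound for the \emph{first} alternative (absolute continuity along segments): separation via projection, the lower estimate from $\rsd f(\bx)$, Fubini to get segments meeting the exceptional null set in one-dimensional measure zero, integration of the a.e.\ derivative bound, and a continuity passage back to the unperturbed segment. This is essentially the argument the paper itself invokes for that case by citing the proof of \cite[Theorem 5.2]{BurkLewiOver02a}. The genuine gap is in the \emph{second} alternative ($\cQ$ open, $f\in C^1(\cQ)$): your claim that this hypothesis ``is precisely what allows me to recover $f$ from this a.e.\ derivative bound'' is false. Openness of $\cQ$ together with $C^1$ regularity on $\cQ$ yields the Fundamental Theorem of Calculus only along segments \emph{contained} in $\cQ$; along a segment that merely meets $\cQ$ up to a null set, the restriction of $f$ is just continuous and a.e.\ differentiable with a bounded derivative, which is exactly the Cantor-type situation you yourself flag as fatal. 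Concretely, take $n=1$, let $f$ be the Cantor function extended by constants to $\R$, $\bx=0$, and $\cQ=B_\del(0)\setminus C$ with $C$ the Cantor set. Then $\cQ$ is open and of full measure, $f$ is $C^1$ (locally constant) on $\cQ$ with $\grad f\equiv 0$ there, so $G_\del(0)=\{0\}$, and every segment $[0,t]$ is ``good'' in your a.e.\ sense; your recovery step would then give $f(t)-f(0)\le 0$, yet $f(t)>0$. So under the second hypothesis the a.e.-good-segment route does not merely need more care with measurability: it provably produces a false inequality.

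For comparison, the paper's own proof of the $C^1$ case takes a different route precisely here: after the same separation and the same $\rsd$ lower bound, it transfers the strict inequality $f(\bx+tz)>f(\bx)+kt$ to a nearby basepoint $w$ by continuity, and then requires a segment $[w,w+\hat t z]$ lying \emph{entirely} inside $\cQ$, along which $s\mapsto f(w+sz)$ is genuinely $C^1$ and the FTC argument is legitimate. You should be aware, however, that the same example shows this existence demand is itself problematic: for the extended Cantor function, $f$ is constant on every segment contained in $\cQ$, so no such segment can carry the transferred strict increase, and indeed $[0,\infty)\subset\rsd f(0)\not\subset\{0\}=G_\del(0)$, i.e.\ the conclusion of the second alternative fails for this $f$. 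The difficulty you ran into is therefore not an artifact of your Fubini formulation but is intrinsic to the openness/$C^1$ hypothesis; the safe repair of your write-up is to restrict the unified a.e.-segment argument to the absolute-continuity case, where it is correct and matches the cited proof.
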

\begin{proof}
If $f$ is absolutely continuous on line
segments in $B_\del(\bx)$, the result is the first statement established in the 
proof of \cite[Theorem 5.2]{BurkLewiOver02a}. 
If $\cQ$ is open and $f$ is continuously differentiable on $\cQ$, the result
requires only a very small change to this proof. 

If $y\notin G_\del(\bx)$, the separation theorem tells us that there exists
a non-zero vector $z$ and $k\in\R$ such that 
\[
\ip{y}{z}>k\ \text{but}\ \ip{\nabla f(x)}{z}\le k\ \forall\, x\in \cQ\cap(\bx+\del\bB).
\]
If $y\in\rsd f(\bx)$, then $f(x+tz)\ge f(\bx)+t\ip{y}{z}+o(t)$. Let $\bar t>0$ be such that
$t\ip{y}{z}+o(t)>kt$ for all $t\in(0,\bar t]$ so that
$f(\bx+t z)>f(\bx)+k\bar t$ for all
$ t\in (0,\bar t)$.
By continuity, given $t\in (0,\bar t)$, for all points $w$ sufficiently close to $\bx$,
$f(w+\bar t z)>f(w)+k\bar t$. Hence, we can choose $\bar w\in \cQ$ and
$\hat t\in (0,\bar t)$ so that 
\begin{equation}\label{eq:k ineq}
w+s z\in \cQ\cap(\bx+(\del/2)\bB)\ \forall\, s\in(0,\hat t]\  \ \text{with}\ \ 
f(w+\hat t z)>f(w)+k\hat t.
\end{equation}

Now consider the function $\map{g}{[0,\hat t]}{\R}$ defined by $g(s):=f(w+sz)$.
By construction $g$ is continuously differentiable on $(0,\hat t)$ with
$g'(s)=\ip{\nabla f(w+sz)}{z}\le k$. 
Therefore, by the Fundamental Theorem of Calculus,
$f(w+\hat t z)=g(\hat t)\le g(0)+k\hat t=f(w)+k\hat t$ which contradicts
\eqref{eq:k ineq}.
\end{proof}

\begin{thm}[Subdifferential Approximation]
\label{thm:grad approx}
Suppose that, close to $\bx\in\Rn$, the function $\map{f}{\Rn}{\Re}$ is continuous
and absolutely continuous on line segments, with $\hsd f(\bx)$ pointed.
If $\cQ$ is a full measure subset of a neighborhood of $\bx$ consisting of points where 
$f$ is differentiable, then 
\[
\csd f(\bx)=\bigcap_{\delta>0}G_{\del}(\bx)\quad\text{and}\quad
\csd^\infty f(\bx)=\bigcap_{\delta>0}G_{\del}(\bx)^\infty.
\]
Moreover, if $\cQ$ is open with $f$ continuously differentiable on $\cQ$, 
then the requirement that 
$f$ be absolutely continuous on line segments can be dropped.
%
\end{thm}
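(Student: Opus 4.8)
The plan is to fix an arbitrary $\del>0$, trap $\csd f(\bx)$ inside the single set $G_\del(\bx)$ by decomposing it via Theorem \ref{thm:basic sd}(4), and then recover each identity by intersecting over $\del$ for one inclusion and by invoking Lemma \ref{lem:osc G} for the other. The technical linchpin I would establish first is the pair of containments \(\sd f(\bx)\subset G_\del(\bx)\) and \(\hsd f(\bx)\subset G_\del(\bx)^\infty\), valid for every $\del>0$. For the first, take $v\in\sd f(\bx)$ with $x^k\to\bx$, $v^k\to v$, and $v^k\in\rsd f(x^k)$; once $\norm{x^k-\bx}<\del$, apply Lemma \ref{lem:grad and rsg} on $B_{r_k}(x^k)$ with $r_k:=\del-\norm{x^k-\bx}$ to get $v^k\in\rsd f(x^k)\subset G_{r_k}(x^k)$, and note that $x^k+r_k\uball\subset\bx+\del\uball$ forces $G_{r_k}(x^k)\subset G_\del(\bx)$ by monotonicity of the closed convex hull; closedness of $G_\del(\bx)$ then gives $v\in G_\del(\bx)$. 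The horizon containment runs identically: for $v\in\hsd f(\bx)$ with $t_k\downarrow 0$ and $t_kv^k\to v$, the same reasoning places $v^k\in G_\del(\bx)$, whence $v\in G_\del(\bx)^\infty$ by the definition of the horizon cone.

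Given these containments, the inclusion $\csd f(\bx)\subset\bigcap_{\del>0}G_\del(\bx)$ follows by convexity: $\conv\sd f(\bx)\subset G_\del(\bx)$ and $\conv\hsd f(\bx)\subset G_\del(\bx)^\infty$, so the decomposition $\csd f(\bx)=\conv\sd f(\bx)+\conv\hsd f(\bx)$ of Theorem \ref{thm:basic sd}(4), together with the identity $C+C^\infty=C$ for a closed convex set $C$ (its horizon cone being its recession cone), gives $\csd f(\bx)\subset G_\del(\bx)+G_\del(\bx)^\infty=G_\del(\bx)$ for every $\del$. For the reverse inclusion I would fix $\bw\in\bigcap_{\del>0}G_\del(\bx)$, choose $\del_k\downarrow 0$, and feed the constant sequences $x^k\equiv\bx$ and $w^k\equiv\bw\in G_{\del_k}(\bx)$ into Lemma \ref{lem:osc G}(a) to conclude $\bw\in\csd f(\bx)$. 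The horizon identity is proved symmetrically: Theorem \ref{thm:basic sd}(4) gives $\hcsd f(\bx)=\conv\hsd f(\bx)\subset G_\del(\bx)^\infty$ for ``$\subseteq$'', and Lemma \ref{lem:osc G}(b) applied to $x^k\equiv\bx$, $v^k\equiv\bv\in G_{\del_k}(\bx)^\infty$ gives ``$\supseteq$''.

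The ``moreover'' clause is then automatic, since absolute continuity on line segments enters only through Lemma \ref{lem:grad and rsg}, which already carries the alternative hypothesis that $\cQ$ be open with $f$ continuously differentiable, whereas Lemma \ref{lem:osc G} uses only the differentiability of $f$ on $\cQ$. I would take care to phrase every inclusion so that it remains correct when $\csd f(\bx)=\emptyset$ (equivalently $\sd f(\bx)=\emptyset$): both sides of the first identity are then empty, while the horizon identity continues to hold with a possibly nonzero common value. The function $f(x)=x^{1/3}$ at $\bx=0$ from \eqref{eq:ex1} is the motivating instance where this degenerate case genuinely arises, and checking it against the claimed formulas is a useful sanity test.

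The step I expect to demand the most care is the assembly just described. In the locally Lipschitz setting $\csd f(\bx)$ is nonempty and compact and the horizon terms vanish, so the bookkeeping is trivial; here the possible unboundedness and emptiness of $\csd f(\bx)$ must be controlled uniformly in $\del$, and it is precisely the splitting of Theorem \ref{thm:basic sd}(4) into a bounded part $\conv\sd f(\bx)$ and a horizon part $\conv\hsd f(\bx)$, absorbed into $G_\del(\bx)$ through $C+C^\infty=C$, that makes the estimate go through. A secondary point to confirm is that Lemma \ref{lem:osc G} applies with a merely full-measure $\cQ$, which holds because its proof invokes only $\grad f(\cdot)\in\rsd f(\cdot)$ at differentiability points.
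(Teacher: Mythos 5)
Your proof is correct, but it is genuinely more self-contained than the paper's for the first identity. The paper does not re-derive $\csd f(\bx)=\bigcap_{\del>0}G_\del(\bx)$ at all: it cites \cite[Corollary 6.1]{BurkLewiOver02a} for that equality, using Lemma \ref{lem:grad and rsg} only to verify that the cited result extends to the open-$\cQ$, continuously differentiable case; the paper's actual work is the horizon identity, which it proves essentially as you do (Lemma \ref{lem:grad and rsg} gives $\rsd f(x)\subset G_{\del/2}(x)\subset G_\del(\bx)$ for $x$ near $\bx$, hence $\hsd f(\bx)\subset G_\del(\bx)^\infty$ and, via $\csd^\infty f(\bx)=\conv\hsd f(\bx)$ from Theorem \ref{thm:basic sd}(4), one inclusion; Lemma \ref{lem:osc G}(b) with $x^k\equiv\bx$ and $\del_k\downarrow 0$ gives the other). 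Your replacement of the citation by an in-line argument --- the containments $\sd f(\bx)\subset G_\del(\bx)$ and $\hsd f(\bx)\subset G_\del(\bx)^\infty$ obtained from Lemma \ref{lem:grad and rsg} at nearby points plus monotonicity and closedness of $G_\del(\bx)$, then the decomposition $\csd f(\bx)=\conv\sd f(\bx)+\conv\hsd f(\bx)$ absorbed through $G_\del(\bx)+G_\del(\bx)^\infty=G_\del(\bx)$, and Lemma \ref{lem:osc G}(a) for the reverse inclusion --- is valid and effectively re-proves the cited result, at the cost of length; what it buys is a proof readable without \cite{BurkLewiOver02a} and an explicit, uniform-in-$\del$ handling of the unbounded and empty cases of $\csd f(\bx)$. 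Two of your side remarks deserve emphasis: the check that both identities remain correct when $\csd f(\bx)=\emptyset$, and the observation that Lemma \ref{lem:osc G} only uses $\grad f(x)\in\rsd f(x)$ at points of $\cQ$, so its stated openness hypothesis on $\cQ$ can be dropped --- the paper itself silently relies on this when it applies Lemma \ref{lem:osc G}(b) under the theorem's merely full-measure hypothesis on $\cQ$.
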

\begin{proof}
The statement of the theorem differs in two respects from the result
given in \cite[Corollary 6.1]{BurkLewiOver02a}.
First, the result in \cite{BurkLewiOver02a} makes no mention of the case when
$\cQ$ is open, and, second, there is no formula for the horizon cone
equivalence. The case when $\cQ$ is open follows from 
Lemma \ref{lem:grad and rsg} since the lemma tells us that the implication in
\cite[Theorem 5.2]{BurkLewiOver02a} follows from this hypothesis.
Consequently, \cite[Corollary 6.1]{BurkLewiOver02a} follows from this
hypothesis as well.

We now prove the horizon cone equivalence.  By Lemma \ref{lem:grad and rsg},
for all small $\del>0$, $\rsd f(x)\subset G_{\del/2}(x)\subset G_\del(\bx)$ 
for all $x\in B_{\del/2}(\bx)$. Hence $\hsd f(\bx)\subset G_\del(\bx)^\infty$,
and so, by Theorem \ref{thm:basic sd}(4), $\csd^\infty f(\bx)\subset G_\del(\bx)^\infty$ for all small $\del>0$. Consequently,
$\csd^\infty f(\bx)\subset\bigcap_{\delta>0}G_{\del}(\bx)^\infty$.
For the reverse inclusion let $v\in \bigcap_{\delta>0}G_{\del}(\bx)^\infty$.
Then there exist sequences $\del_k\downarrow 0$ and $v^k\rightarrow v$ 
such that $v^k\in G_{\del_k}(\bx)^\infty$ for all $k\in\bN$.
By Lemma \ref{lem:osc G}(b), $v\in\hcsd f(\bx)$ which proves the result.
\end{proof}

\noindent
The next lemma establishes a key property of approximate directions
of steepest descent for directionally Lipschitz functions and
extends the content of \cite[Lemma 3.1]{Kiwi07} to these functions.

\begin{lem}[Approximate Directions of Steepest Descent]
\label{lem:basic proj hyp1}
Let $\bx\in\Rn$ be such that $\map{f}{\Rn}{\Re}$ is differentiable on a full
measure subset $\cQ$ of an open convex neighborhood $\cN$ of $\bx$.
Further suppose that $f$ is either continuous and absolutely 
continuous along line segments in $\cN$ or that $\cQ$ is open. 
If 
\begin{equation}\label{eq:limits}
0\not\in\csd f(\bx),\  \emptyset\ne \csd f(\bx),\mbox{ and }
-\proj{\csd f(\bx)}(0)\in\intr(\hcsd f(\bx))^*,
\end{equation}
then, for all $\beta\in(0,1)$, there exists
$\del>0$ and $\eta>0$ such that 
$0\notin G_\eta(\bx)$ and, 
for every $u,v\in G_\eta(\bx)$ 
with $\norm{u}\le\dist{0}{G_\eta(\bx)}+\del$, we have
$\ip{v}{u}>\beta\norm{u}^2$. 
\end{lem}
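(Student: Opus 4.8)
The plan is to show that for every sufficiently small $\eta>0$ the pair $(C,z):=(G_\eta(\bx),0)$ satisfies the hypotheses of Lemma~\ref{lem:dist bd and pointed hzn}, so that the asserted inequality becomes exactly the $z=0$ specialization of that lemma. First I would extract pointedness from the data: the membership $-\proj{\csd f(\bx)}(0)\in\intr{(\hcsd f(\bx))^*}$ in~\eqref{eq:limits} shows $\intr{(\hcsd f(\bx))^*}\neq\emptyset$, so by Lemma~\ref{lem:pointed cones}(iii) the closed convex cone $\hcsd f(\bx)$ --- equivalently $\hsd f(\bx)$, by Theorem~\ref{thm:basic sd}(4) --- is pointed. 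This is the standing hypothesis behind Theorem~\ref{thm:grad approx}, Lemma~\ref{lem:loc pointedness}, and Lemma~\ref{lem:osc G}. Theorem~\ref{thm:grad approx} then gives $\csd f(\bx)=\bigcap_{\del>0}G_\del(\bx)$ together with the horizon identity $\hcsd f(\bx)=\bigcap_{\del>0}G_\del(\bx)^\infty$, and since the sets $G_\del(\bx)$ decrease as $\del\downarrow0$, Lemma~\ref{lem:loc pointedness}(ii) supplies $\bdel>0$ with $G_\del(\bx)^\infty$ pointed for all $\del\in(0,\bdel]$.

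Next I would record two convergence facts. Put $\bp:=\proj{\csd f(\bx)}(0)$ and $p_\eta:=\proj{G_\eta(\bx)}(0)$. From $\csd f(\bx)\subset G_\eta(\bx)$ we get $\norm{p_\eta}=\dist{0}{G_\eta(\bx)}\le\dist{0}{\csd f(\bx)}$, so $\{p_\eta\}$ is bounded; because the $G_\eta(\bx)$ decrease to $\csd f(\bx)$, every cluster point of $p_\eta$ as $\eta\downarrow0$ lies in $\csd f(\bx)$ and realizes the distance $\dist{0}{\csd f(\bx)}$, whence uniqueness of the projection forces $p_\eta\to\bp$ and $\dist{0}{G_\eta(\bx)}\to\dist{0}{\csd f(\bx)}$. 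Since $0\notin\csd f(\bx)$ this limit is strictly positive, so $0\notin G_\eta(\bx)$ for all small $\eta$.

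The crux, and the step I expect to be the main obstacle, is transporting the interior condition~\eqref{eq:interior}: I must verify $-p_\eta\in\intr{(G_\eta(\bx)^\infty)^*}$ for all small $\eta$. I would argue by contradiction via compactness. By Lemma~\ref{lem:pointed cones}, $-\bp\in\intr{(\hcsd f(\bx))^*}$ yields $\eps_0>0$ with $\ip{-\bp}{w}\le-\eps_0\norm{w}$ for all $w\in\hcsd f(\bx)$. If the claim failed there would be $\eta_k\downarrow0$ (with $G_{\eta_k}(\bx)^\infty$ pointed and $0\notin G_{\eta_k}(\bx)$) for which $-p_{\eta_k}\notin\intr{(G_{\eta_k}(\bx)^\infty)^*}$; by the characterization in Lemma~\ref{lem:pointed cones} this means $\sup\{\ip{-p_{\eta_k}}{w}:w\in G_{\eta_k}(\bx)^\infty,\ \norm{w}=1\}\ge0$, a supremum attained over the compact unit slice of the closed cone $G_{\eta_k}(\bx)^\infty$ (which is nontrivial, for otherwise its polar would be all of $\Rn$ and contain $-p_{\eta_k}$) at some $w^k$ with $\ip{-p_{\eta_k}}{w^k}\ge0$. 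Passing to a subsequence with $w^k\to\bw$, $\norm{\bw}=1$, Lemma~\ref{lem:osc G}(b), applied with constant base point $\bx$, radii $\eta_k\downarrow0$, and $w^k\in G_{\eta_k}(\bx)^\infty$, gives $\bw\in\hcsd f(\bx)$; combined with $p_{\eta_k}\to\bp$ this yields $\ip{-\bp}{\bw}\ge0$, contradicting $\ip{-\bp}{\bw}\le-\eps_0<0$. The delicate points here are the continuity of the projection established above and the correct use of Lemma~\ref{lem:osc G}(b) to carry horizon directions of the approximants into $\hcsd f(\bx)$.

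Finally, fixing any such small $\eta$, the set $G_\eta(\bx)$ is nonempty (as $\cQ$ has full measure), closed, and convex, with $G_\eta(\bx)^\infty$ pointed, $0\notin G_\eta(\bx)$, and $0-\proj{G_\eta(\bx)}(0)=-p_\eta\in\intr{(G_\eta(\bx)^\infty)^*}$ --- precisely the hypotheses of Lemma~\ref{lem:dist bd and pointed hzn} for $C=G_\eta(\bx)$ and $z=0$. Its $z=0$ conclusion then provides, for each $\beta\in(0,1)$, a $\del>0$ such that $\ip{v}{u}>\beta\norm{u}^2$ whenever $u,v\in G_\eta(\bx)$ satisfy $\norm{u}\le\dist{0}{G_\eta(\bx)}+\del$, which is exactly the assertion of the lemma.
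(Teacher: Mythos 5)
Your proof is correct, but it takes a genuinely different route from the paper's. The paper proves the lemma by a single self-contained contradiction argument: it assumes the conclusion fails along sequences $\eta_i,\del_i\downarrow 0$, extracts pairs $u^i,v^i\in G_{\eta_i}(\bx)$ violating the inequality, shows $u^i\to\bu:=\proj{\csd f(\bx)}(0)$, and then splits on whether $\{v^i\}$ is bounded: the bounded case contradicts the projection inequality $\ip{\bv}{\bu}\ge\norm{\bu}^2$, while in the unbounded case a Carath\'eodory decomposition of $v^i$ together with pointedness of $\hsd f(\bx)$ produces a nonzero $\tv\in\hcsd f(\bx)=\csd f(\bx)^\infty$ with $\ip{\tv}{\bu}\le 0$, contradicting Lemma~\ref{lem:pointed cones}. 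Notably, the paper never actually invokes Lemma~\ref{lem:dist bd and pointed hzn} in this proof. You instead reduce the statement to Lemma~\ref{lem:dist bd and pointed hzn} applied to $C=G_\eta(\bx)$ and $z=0$ for a fixed small $\eta$; the new work this demands --- convergence of the projections $p_\eta\to\proj{\csd f(\bx)}(0)$ and of the distances, and the transport of the interior condition $-p_\eta\in\intr{(G_\eta(\bx)^\infty)^*}$, which you prove by a compactness/contradiction argument using Lemma~\ref{lem:osc G}(b) and the two-sided characterization in Lemma~\ref{lem:pointed cones} --- is exactly what the paper's inline argument avoids, while your route avoids redoing the Carath\'eodory/unboundedness analysis (it is absorbed into Lemmas~\ref{lem:osc G} and \ref{lem:loc pointedness}, which you quote). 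Your modular approach has two advantages: it makes good on the paper's own remark that condition \eqref{eq:interior} of Lemma~\ref{lem:dist bd and pointed hzn} is ``key'' to the analysis, and it yields $\eta$ independent of $\beta$, whereas the paper's argument fixes $\beta$ before choosing the sequences. Both proofs rest on the same toolbox (Theorem~\ref{thm:grad approx}, Lemma~\ref{lem:osc G}, Lemma~\ref{lem:pointed cones}) and therefore inherit the same implicit reliance on continuity of $f$ near $\bx$ required by those results.
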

\begin{proof}
%
Since $0\not\in\csd f(\bx)$, Theorem \ref{thm:grad approx} tells us that
there
is an $\bar\eta>0$ such that $0\notin G_\eta(\bx)$ for all $\eta\in(0,\bar\eta]$.
Theorem \ref{thm:grad approx} also tells us that
$\csd f(\bx)\subset G_\eta(\bx)$ for all $\eta\ge 0$. Therefore, 
$\dist{0}{G_\eta(\bx)}\le \dist{0}{\csd f(\bx)}<\infty$ for all $\eta\ge 0$.

We suppose the result is false and establish a contradiction.
Since the result is false, there exist $\hbeta\in(0,1)$ and sequences
$\{(u^i,v^i)\}$ in $\bR^{2n}$ and $\{(\eta_i,\del_i)\}$ in $\bR^2_+$
with $\eta_i\downarrow 0$ and $\del_i\downarrow 0$ such that,
for all $ i\in\bN$,
\begin{equation}\label{eq:G contro}
u^i,v^i\!\in\! G_{\eta_i}(\bx),\, \norm{u^i}\!\le\!\dist{0\!}{\!G_{\eta_i}(\bx)\!}+\del_i
\text{ and}\,\ip{v^i}{u^i}\!\le\!\hbeta\norm{u^i}^2. 
\end{equation}

Since $\{\norm{u^i}\}$ 
is bounded by $\dist{0}{\csd f(\bx)}+\del_0$, we may assume that $u^i\rightarrow \bu$, where 
$\bu\in\csd f(\bx)$ by Lemma \ref{lem:osc G}.
Theorem \ref{thm:grad approx} tells us that
$\csd f(\bx)\subset G_{\eta_i}(\bx)$ for all $i\in\bN$.
Hence, for all $i$ large,
\( 
\norm{u^i}\le\dist{0}{\csd f(\bx)}+\del_i.
\) 
%
Therefore, 
$\bu=\proj{\csd f(\bx)}(0)$ and so $-\bu\in \intr(\hcsd f(\bx))^*$
by \eqref{eq:limits}.

Next consider the sequence $\{v^i\}$. If this sequence is bounded, then, again,
Lemma \ref{lem:osc G} tells us that, with no loss in generality, there is a 
$\bv\in\csd f(\bx)$ such that $v^i\goesto \bv$. The projection theorem for convex sets
tells us that $\ip{\bv}{\bu}\ge\norm{\bu}^2$, but, by construction, 
$\ip{\bv}{\bu}\le\hbeta\norm{\bu}^2<\norm{\bu}^2$. 
This contradiction implies that the sequence $\{v^i\}$ is unbounded.
Therefore, with no loss in generality, $\{v^i\}$ is divergent.
By Carath\'eodory's Theorem, there exists $\lam^i\in\Del_n$ and $x^{ij}\in G_{\eta_i}(\bx)$
such that $v^i=\sum_{j=1}^{n+1}\lam_{ij}\nabla f(x^{ij})$ for all $i$. 
Since the sequence $\{v^i\}$ 
is divergent, the sequence defined by
$\hg_i:=(\lam_{i1}\nabla f(x^{i1}),\dots, \lam_{i(n+1)}\nabla f(x^{i(n+1)}))$ must also be divergent,  
and so, again with no loss in generality, 
there
is a $(\bg^1,\dots,\bg^{n+1})$ such that 
$\hg_i/\norm{\hg_i}\rightarrow (\bg^1,\dots,\bg^{n+1})\ne 0$, where we have taken 
$\norm{\hg_i}:=\max_{j=1,\dots,n+1}\lam_{ij}\norm{\nabla f(x^{ij})}$. 
Theorem \ref{thm:basic sd} tells us that
$\bg^j\in\hsd f(\bx),\, j=1,\dots,n+1$. 
Clearly, $\norm{v^i}\le\norm{\hg_i}$ for all $i=1,2,\dots$.
If $\{v^i/\norm{\hg_i}\}$ has a subsequence convergent to zero, 
then taking the limit
along this subsequence yields
$\sum_{j=1}^{n+1}\bg^j=0$
which contradicts the fact that $\hsd f(\bx)$ is pointed.
So we can assume that 
$v^i/\norm{\hg_i}=
\sum_{j=1}^{n+1}\lam^i_j\nabla f(x^{ij})/\norm{\hg_i}\rightarrow \tv
\in \hsd f(\bx)\setminus \{0\}$.
Theorem \ref{thm:basic sd} tells us that $\tv\in\hcsd f(\bx)=\csd f(\bx)^\infty$. 
But $-\bu\in\intr (\csd f(\bx)^\infty)^*$, so, by Lemma \ref{lem:pointed cones},
$\ip{\tv}{\bu}>0$ while $\ip{\tv}{\bu}\le 0$ by \eqref{eq:G contro}. This final contradiction establishes the result.
\end{proof}

The condition $-\proj{\csd f(\bx)}(0)\in\intr(\hcsd f(\bx))^*$ in 
\eqref{eq:limits}
plays an important role in our analysis.
Although examples where it fails to hold are easily generated,
such points are \emph{degenerate} in the sense that the direction of steepest descent for the regular subderivative does not lie
in the interior of its domain (see Lemma \ref{lem:interior equiv}).
Further discussion
of this issue is given in our concluding remarks.

\begin{example}
Let $\map{h}{\bR^2}{\bR}$ be given by
$h(x):=\ip{y}{x}+[\dist{x}{\bR^2_+}]^{1/2}$,
where $y:=(-1,\beta)^T$. Then
\(\csd h(0)=y+\bR^2_-,\ (\hcsd h(0))^*=\bR^2_+\) and
\[
-\proj{\csd f(\bx)}(0)=
\begin{cases}
(1,0)^T\not\in\intr(\hcsd f(0))^*&,\ \beta\ge 0,\\
(1,-\beta)\in\intr(\hcsd f(0))^*&,\ \beta< 0.
\end{cases}
\]
\end{example}

\section{The Gradient Sampling Algorithm}

Assume that
$\map{f}{\Rn}{\R}$ satisfies the following hypotheses:
\begin{center}\it
$\mathcal{H}$: $f$ is continuous on $\Rn$ and continuously differentiable \\ on an open
full measure set $\cD\subset\Rn$.
\end{center}

We use the form of the gradient sampling algorithm given in
\cite{BCLOS20} based on the version proposed by 
Kiwiel in \cite{Kiwi07}.

\noindent\rule{\textwidth}{1pt}

\noindent
 {\bf The GS Algorithm} (Gradient Sampling Algorithm) 

\noindent\rule{\textwidth}{1pt}
 
 \noindent
{\bf Initialization:} Let $x^0$ be a point at which $f$ is differentiable, 
choose termination tolerances $(\epsopt,\nuopt) \in [0,\infty) \times [0,\infty)$
and
the initial sampling radius $\epsilon_0 \in (\epsopt,\infty)$, initial stationarity target $\nu_0 \in [\nuopt,\infty)$, sample size $m \geq n+1$, line search parameters $(\beta,\gamma) \in (0,1) \times (0,1)$, and reduction factors $(\redeps,\rednu) \in (0,1] \times (0,1]$
    
\noindent
    {\bf For $k \in \N$ do}
\begin{enumerate}[label=(\roman*)]
      \item 
      \label{step.sample}
      Independently sample $\{x^{k,1},\dots,x^{k,m}\}$ uniformly from 
      $x^k+\epsilon_k\bB$. 
      \item 
      \label{step.terminate0}
      Terminate the algorithm if $\{x^{k,1},\dots,x^{k,m}\}\not\subset\cD$.
      \item 
      \label{step.qp}
      Compute $g^k$ as the solution of $\min_{g\in\Gcal^k} \thalf \|g\|^2$, where 
      \[\Gcal^k := \conv\{\nabla f(x^k),\nabla f(x^{k,1}),\dots,\nabla f(x^{k,m})\}.\] 
      \item 
      \label{step.terminate}
      \textbf{If} $\nabla f(x^k)=0$ or ($\|g^k\|_2 \leq \nuopt$ and 
      $\epsilon_k \leq \epsopt$), \textbf{then} terminate. 
      \item 
      \textbf{If} $\|g^k\|_2 \leq \nu_k$
      \item 
      \qquad \textbf{then} set $\nu_{k+1} \gets \rednu\nu_k$, 
      $\epsilon_{k+1} \gets \redeps\epsilon_k$, and $t_k \gets 0$
      \item 
      \label{step.armijo}
      \qquad \textbf{else}  set $\nu_{k+1} \gets \nu_k$, $\epsilon_{k+1} \gets \epsilon_k$, $d^k\gets -g^k/\norm{g^k}$, and 
      \begin{equation}\label{eq.armijo}
        \qquad\qquad t_k \gets \max\left\{ t \in \{1,\gamma,\gamma^2,\dots\} : 
        f(x^k + t d^k) < f(x^k) - \beta t \|g^k\| \right\}.
      \end{equation}
      \item 
      \label{step.check}
      \textbf{If} $f$ is differentiable at $x^k + t_k d^k $ 
      \item 
      \qquad \textbf{then} set $x^{k+1} \gets x^k + t_k d^k$
      \item 
      \label{step.update}
      \qquad \textbf{else} set $x^{k+1}$ randomly as any point where $f$ is 
      \item[] \qquad\qquad differentiable and such that 
      \begin{equation*}\label{eq.update}
      \begin{aligned}
        &f(x^{k+1}) < f(x^k) - \beta t_k \|g^k\|\ \ \text{and}\\ 
        &\ \|x^k + t_k d^k - x^{k+1}\|_2 \leq \min\{t_k,\epsilon_k\} 
        \end{aligned}\ .
      \end{equation*}
    \end{enumerate}
    {\bf End for}

\noindent\rule{\textwidth}{1pt}
\bigskip

\begin{rem}
As shown in \cite[Page 756]{BurkLewiOver05}, the line search  
\eqref{eq.armijo} in the algorithm is finitely terminating when 
$\nabla f(x^k)\ne 0$.
\end{rem}

\begin{rem}
In \cite[Section 4.1]{Kiwi07} it is observed that one can also take 
$d^k$ to be the un-normalized direction $-g^k$ when $f$ is Lipschitz.
However, the argument in \cite{Kiwi07} explicitly depends on $f$ being Lipschitz
continuous. In the non-Lipschitzian case, our proof of convergence requires 
the normalized direction in the statement of the GS algorithm given above.
\end{rem}

\subsection{Convergence}

We now introduce the key tools in analyzing the GS algorithm
introduced in \cite{BurkLewiOver05}: for $\eps,\del>0$ and $\bx,x\in\Rn$,
let
\bequation\label{eq:rho}
\rho_\eps(x):=\dist{0}{G_\eps(x)}
\eequation
and set
{ 
\[
\begin{aligned}
\D_\eps^m(x)&:=\prod_{1}^m((x+\eps\bB)\cap\D)\subset\Rn \qquad\text{and}\\
V_\eps(\bx,x,\del)\!\!&:=\!\!\set{\!(y^1,y^2,\dots,y^m)\!\!\in\! \D_\eps^m(x)\!\!}{
\!\text{dist}(0\!\mid \!\conv\!\{\nabla f(y^i)\}_{i=1}^m)\!\!\le\!\!\rho_\eps(\bx)\!\!+\!\!\del\!},
\end{aligned}
\]
}
where $m\ge n+1$ is as given in the statement of Algorithm I.

The next lemma shows that the convex hull of a collection of gradients
can be used to obtain directions of approximate steepest descent.

\begin{lem}\cite[Lemma 3.2(i)]{BurkLewiOver05}
\cite[Lemma 3.2(i)]{Kiwi07}\label{lem:loc uniformity}
Let $\eps > 0$ and $\bx\in \Rn$.
For all $\del>0$ there is a $\tau>0$ and a non-empty open set $\bV$ such that
$\bV\subset V_\eps(\bx,x,\del)$ for all $x\in B_\tau(\bx)$ with
$\dist{0}{\conv\{\nabla f(y^i)\}_{i=1}^m}\le \rho_\eps(\bx)+\del$ for all
$(y^1,\dots,y^m)\in \bV$.
\end{lem}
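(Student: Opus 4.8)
The plan is to prove the lemma by establishing two things: first, that $G_\eps(\bx)$ has nonempty interior near the relevant gradients, so that a whole open box $\bV$ of sample tuples produces convex hulls whose distance to the origin stays below $\rho_\eps(\bx)+\del$; and second, that this box can be chosen uniformly over a neighborhood $B_\tau(\bx)$ of the base point. The key observation is that $\rho_\eps(x)=\dist{0}{G_\eps(x)}$ and that $G_\eps(x)$ is built from gradients of $f$ at differentiable points in $(x+\eps\bB)\cap\cD$. By definition of $\rho_\eps(\bx)=\dist{0}{G_\eps(\bx)}$ and Carath\'eodory's Theorem, there exist points $\bx^1,\dots,\bx^{n+1}\in (\bx+\eps\bB)\cap\cD$ and weights $\balf\in\Del_n$ such that $\sum_i\balf_i\nabla f(\bx^i)$ nearly attains the distance $\rho_\eps(\bx)$; in fact one can find a convex combination realizing distance at most $\rho_\eps(\bx)+\del/2$.

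First I would use the continuity of $\nabla f$ on the open set $\cD$ to perturb each of these $n+1$ anchor points. Since $\cD$ is open and $f$ is continuously differentiable there, each $\bx^i$ has an open neighborhood inside $(\bx+\eps\bB)\cap\cD$ on which $\nabla f$ varies by an arbitrarily small amount. Choosing these neighborhoods small enough, any selection $y^i$ from the $i$-th neighborhood yields $\norm{\nabla f(y^i)-\nabla f(\bx^i)}$ tiny, so that the convex combination $\sum_i\balf_i\nabla f(y^i)$ stays within distance $\rho_\eps(\bx)+\del$ of the origin, hence $\dist{0}{\conv\{\nabla f(y^i)\}}\le\rho_\eps(\bx)+\del$. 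Taking $\bV$ to be the product of these $n+1$ open neighborhoods (padded out to $m$ coordinates by including additional free coordinates ranging over $(\bx+\eps\bB)\cap\cD$, which only enlarges the convex hull and cannot increase the distance to $0$) gives a nonempty open set $\bV$ with the desired distance bound for every tuple in it.

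Next I would handle the uniformity in $x\in B_\tau(\bx)$. The subtlety is that the membership constraint defining $V_\eps(\bx,x,\del)$ requires each $y^i\in(x+\eps\bB)\cap\cD$, and as $x$ moves the ball $x+\eps\bB$ shifts. However, because $\bV$ was built from neighborhoods compactly contained in the \emph{open} ball $\intr(\bx+\eps\bB)$, there is room to spare: by shrinking $\tau>0$ sufficiently, the closure of the projection of $\bV$ onto each coordinate remains inside $(x+\eps\bB)\cap\cD$ for every $x$ with $\norm{x-\bx}<\tau$. Thus $\bV\subset\D_\eps^m(x)$ for all such $x$, and combined with the distance bound already established, $\bV\subset V_\eps(\bx,x,\del)$ uniformly. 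The distance bound itself does not depend on $x$ at all, since it is a statement purely about the fixed gradients $\{\nabla f(y^i)\}$, so once the containment in $\D_\eps^m(x)$ holds, the full conclusion follows.

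The main obstacle I anticipate is the bookkeeping to guarantee the anchor neighborhoods sit strictly inside the \emph{open} ball $\intr(\bx+\eps\bB)$ rather than merely inside the closed ball, so that the $\tau$-slack argument goes through; if an anchor point $\bx^i$ lay on the boundary sphere $\norm{\bx^i-\bx}=\eps$, then no positive $\tau$ would keep it inside $x+\eps\bB$ as $x$ recedes from $\bx$. I would resolve this by noting that $\cQ$ (equivalently $\cD$) is a full measure set, so the near-optimal anchors realizing $\rho_\eps(\bx)+\del/2$ can be selected with $\norm{\bx^i-\bx}<\eps$ strictly; alternatively one shrinks the effective sampling radius slightly and absorbs the change into the $\del$-tolerance. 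A secondary technical point is ensuring the extra $m-(n+1)$ coordinates can always be populated inside $(x+\eps\bB)\cap\cD$, which holds because $\cD$ has full measure and hence meets every open subset of the ball. With those placements secured, the result follows from continuity of $\nabla f$ and the openness of $\cD$.
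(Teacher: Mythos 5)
Your proof is correct and takes essentially the same approach as the proof the paper relies on: the paper states this lemma without proof, citing \cite[Lemma 3.2(i)]{BurkLewiOver05} and \cite[Lemma 3.2(i)]{Kiwi07} and remarking only that the argument follows from the continuity of $\nabla f$ on $\cD$, which is precisely your Carath\'eodory-plus-perturbation construction with a product of small open neighborhoods shrunk to survive the shift from $\bx+\eps\bB$ to $x+\eps\bB$. The one minor imprecision is that placing a near-optimal anchor strictly inside $B_\eps(\bx)$ is justified by the openness of $\cD$ together with the continuity of $\nabla f$ there (a boundary anchor's gradient is a limit of gradients at interior points of $\cD$), not by full measure per se, but this does not affect the correctness of the argument.
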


\begin{rem}
The statement of this lemma parallels the form 
given in \cite[Lemma 3.2(i)]{Kiwi07} rather than the form given in
\cite[Lemma 3.2(i)]{BurkLewiOver05}. 
Essentially the same proof is given in both papers and follows 
from the continuity of $\nabla f$ on $\cD$.
\end{rem}

We make use of the following mean value theorem to provide a lower bound
on the step sizes $t_k$ in step (vi) of the GS algorithm when 
$0\notin \sd f(x)$.
 
\begin{thm}[Approximate Mean Value Theorem]\cite[Theorem 3.4.7]{BoZ05}\label{thm:amvt}
Let $\map{\varphi}{\Rn}{\R}$ be lsc and assume that $r\in\R$ and $x,y\in\Rn$ are such that $x\ne y$,
$\varphi(x)<+\infty$, and $r<\varphi(y)-\varphi(x)$. 
Then there is a $\hx\in [x,y)$ such that for all $\eps>0$
there exists $(\tx,\varphi(\tx))\in B_\eps((\hx,\varphi(\hx)))$ and 
$\tv\in\rsd \varphi(\tx)$ for which
\[
\ip{\tv}{\hx-\tx}>-\eps,\ \ \ip{\tv}{y-x}>r,\ \mbox{ and }\ \varphi(\tx)\le \varphi(x) +|r|+\eps.
\]
\end{thm}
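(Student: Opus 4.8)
The plan is to adapt Zagrodny's argument: collapse the estimate to one dimension along the segment $[x,y]$ to locate $\hx$, and then convert one-dimensional minimality into a genuine regular subgradient by tilting $\varphi$ by an affine function of the correct slope and applying a variational principle.

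First, I would reduce to the segment. Writing $z(t):=x+t(y-x)$ and $\theta(t):=\varphi(z(t))-tr$ for $t\in[0,1]$, lower semicontinuity of $\varphi$ and compactness of $[0,1]$ make $\theta$ \lsc and guarantee that it attains its minimum; I would take $\bt:=\max\argmin_{[0,1]}\theta$ and set $\hx:=z(\bt)$. The hypotheses give $\theta(0)=\varphi(x)$ and $\theta(1)=\varphi(y)-r>\varphi(x)=\theta(0)$, so $\bt<1$ and $\hx\in[x,y)$. Two facts fall out at once. From $\theta(\bt)\le\theta(0)$ I get $\varphi(\hx)\le\varphi(x)+\bt r\le\varphi(x)+|r|$, which, together with attentive closeness $\varphi(\tx)\to\varphi(\hx)$, will yield the value estimate $\varphi(\tx)\le\varphi(x)+|r|+\eps$. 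More importantly, because $\bt$ is the \emph{largest} minimizer, $\theta(t)>\theta(\bt)$ for every $t\in(\bt,1]$, i.e.\ $\varphi(z(t))-\varphi(\hx)>(t-\bt)r$; thus $\varphi$ strictly out-increases the rate $r$ on every subinterval immediately to the right of $\hx$. This strict surplus is what will power the strict inequality $\ip{\tv}{y-x}>r$.

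Second, I would manufacture the subgradient. Let $\ell$ be the affine function with $\nabla\ell=\tfrac{r}{\norm{y-x}^2}(y-x)$, so that $\ip{\nabla\ell}{y-x}=r$ and, normalizing $\ell(x)=0$, $\ell(z(t))=tr$; then $\hx$ minimizes $\varphi-\ell$ over $[x,y]$ and so is a global minimizer of the \lsc function $\Psi:=\varphi-\ell+\del_{[x,y]}$, giving $0\in\rsd\Psi(\hx)$. For each prescribed $\eps>0$ I would apply Ekeland's variational principle to $\Psi$ and then the finite-dimensional fuzzy sum rule for regular subgradients to decouple $\Psi$ at an Ekeland point: this produces $\tx$ with $\norm{\tx-\hx}<\eps$ and $\varphi(\tx)$ within $\eps$ of $\varphi(\hx)$, a point $c\in[x,y]$ near $\hx$, a regular subgradient $\tv\in\rsd\varphi(\tx)$, and a regular normal $w\in\widehat N_{[x,y]}(c)$ with $\tv-\nabla\ell+w$ of norm as small as desired. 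The Ekeland perturbation $\eps\norm{\cdot-\tx}$ is exactly what pins the first inequality $\ip{\tv}{\hx-\tx}>-\eps$: since $\tx$ minimizes the perturbed functional and lies within the prescribed radius of $\hx$, the product $\ip{\tv}{\hx-\tx}$ stays controlled even when $\norm{\tv}$ is large — the genuinely non-Lipschitz feature. Since $\widehat N_{[x,y]}(c)$ is the hyperplane orthogonal to $y-x$ at interior $c$ and a backward half-space at the endpoint $x$, one has $\ip{w}{y-x}\le0$, whence $\ip{\tv}{y-x}=r-\ip{w}{y-x}+O(\mathrm{error})\ge r-O(\mathrm{error})$.

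I expect the strict inequality $\ip{\tv}{y-x}>r$ to be the main obstacle, because the fuzzy sum rule is lossy and returns only $\ge r$ up to an error that, while arbitrarily small, need not vanish. To recover strictness I would not enlarge the slope globally — that would replace $|r|$ by a larger $|r'|$ in the value estimate — but instead exploit the strict surplus recorded above: fix $s\in(\bt,1]$ with $z(s)\in B_\eps(\hx)$, so that the secant slope $\tilde r:=[\varphi(z(s))-\varphi(\hx)]/(s-\bt)$ strictly exceeds $r$, choose $\tilde r'\in(r,\tilde r)$, and rerun the tilt-and-decouple step of the previous paragraph on the short segment $[\hx,z(s)]$ with target slope $\tilde r'$. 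Because $\tilde r'<\tilde r$, the left endpoint stays the minimizer over this short segment, the decoupling now gives $\ip{\tv}{y-x}\ge\tilde r'-O(\mathrm{error})>r$, and, crucially, the steeper slope acts only over a piece of length $O(\eps)$, so its contribution to $\varphi(\tx)-\varphi(\hx)$ is $o(1)$ and the $|r|$ value estimate survives. The delicate point throughout is to carry all three conclusions simultaneously: the proximity radius, the fuzzy-rule error, and the length $s-\bt$ must be tuned together so that the possibly unbounded-in-norm subgradient $\tv$ still satisfies $\ip{\tv}{\hx-\tx}>-\eps$ while $\ip{\tv}{y-x}>r$, and balancing this coupling between subgradient magnitude and proximity is where the real work lies.
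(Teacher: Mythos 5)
First, a point of reference: the paper does not prove this theorem at all --- it is quoted verbatim from \cite[Theorem 3.4.7]{BoZ05} --- so your attempt can only be measured against the standard (Zagrodny-style) argument. Your skeleton is exactly that argument, and most of it is right: tilting by the affine $\ell$ with $\ip{\nabla\ell}{y-x}=r$, taking $\hx=z(\bt)$ with $\bt$ the \emph{largest} minimizer of $\theta$ on $[0,1]$ (which gives $\hx\in[x,y)$, the bound $\varphi(\hx)\le\varphi(x)+|r|$, and the surplus $\theta(t)>\theta(\bt)$ for $t>\bt$), and decoupling at the minimizer. One precision remark: the tool that yields $\ip{\tv}{\hx-\tx}>-\eps$ is the \emph{strong} fuzzy sum rule \cite[Theorem 3.3.1]{BoZ05}, whose conclusion contains the product bound $\mathrm{diam}(\tx,c)\cdot\max\{1,\norm{\tv},\norm{w}\}<\del$; that bound together with $\ip{w}{\hx-c}\le 0$ does the job. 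Attributing it to an outer Ekeland step is not quite right --- Ekeland applied to $\Psi$ at its exact minimizer just returns $\hx$ and gives nothing; the product control lives inside the sum rule.

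The genuine gap is in the strictness fix. The claim ``because $\tilde r'<\tilde r$, the left endpoint stays the minimizer over this short segment'' is false; $\tilde r'<\tilde r$ only guarantees that the \emph{right} endpoint $z(s)$ is not a minimizer. Counterexample (with $n=1$ and $\varphi$ smooth): $x=0$, $y=1$, $\varphi(t)=(t-\tfrac12)^2+\tfrac{t}{4}$, $r=\tfrac18$; then $\bt=\tfrac{7}{16}$ and $\theta(t)-\theta(\bt)=(t-\bt)^2$, so for any $\tilde r'>r$ the re-tilted function at $t=\bt+h$ differs from its value at $\bt$ by $h^2-h(\tilde r'-r)<0$ when $0<h<\tilde r'-r$: the minimizer is the interior point $h=(\tilde r'-r)/2$, not $\hx$. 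Consequently your rerun decouples at some $\hx'=z(t')$ with $t'>\bt$, while the conclusions are stated relative to the fixed $\hx$: you must still produce $\tx\in B_\eps(\hx)$ ($\varphi$-attentively) with $\ip{\tv}{\hx-\tx}>-\eps$. Writing $\ip{\tv}{\hx-\tx}=-(t'-\bt)\ip{\tv}{y-x}+\ip{\tv}{\hx'-\tx}$, the danger is exactly the coupling you defer as ``where the real work lies'': $\ip{\tv}{y-x}$ has \emph{no a priori upper bound}, since the normal component $-\ip{w'}{y-x}$ can be arbitrarily large, so smallness of $t'-\bt$ alone does not control the product, and the proposal as written does not close this. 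The missing idea is a dichotomy on $t'$: if $t'=\bt$ the term vanishes; if $t'\in(\bt,s)$, then for a fuzzy-rule tolerance smaller than the distance from $\hx'$ to the endpoints of $[\hx,z(s)]$, the auxiliary point $c'$ lies in the relative interior of that segment, where $\widehat N_{[\hx,z(s)]}(c')$ is orthogonal to $y-x$; hence $\ip{w'}{y-x}=0$, $\ip{\tv}{y-x}=\tilde r'+O(\del)$ is bounded (take $\tilde r'\le r+1$), and the dangerous term is $O\bigl((s-\bt)(|r|+1)\bigr)$, absorbed by choosing $s-\bt$ small; lower semicontinuity of $\varphi$ at $\hx$ supplies the remaining (lower) half of the $\varphi$-attentive proximity. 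With that dichotomy inserted, your outline becomes a correct proof; without it, conclusions $\ip{\tv}{\hx-\tx}>-\eps$ and $\tx\in B_\eps(\hx)$ are not established simultaneously with $\ip{\tv}{y-x}>r$.
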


\begin{lem}[Stepsize Bound]\label{lem:stepsize}
Let $\map{f}{\Rn}{\R}$ be such that $\cH$ holds.
Let $\beta,\gam\in (0,1)$ be given, and let $\bx\in\Rn$ be such that
all three conditions in \eqref{eq:limits} hold.
Then there exist 
$\eta>0$ and $\del>0$ so that 
the consequences of Lemma \ref{lem:basic proj hyp1} hold. 
Moreover,
given $\eps>0$, we can choose $\tau\in(0,\eps/3)$ so that the consequences of
Lemma \ref{lem:loc uniformity} hold for this $\del$. 
That is, there exists 
a non-empty open set $\bV$ such that
$\bV\subset V_\eps(\bx,x,\del)$ for all $x\in B_\tau(\bx)$ with
$\dist{0}{\conv\{\nabla f(y^i)\}_{i=1}^m}\le \rho_\eps(\bx)+\del$ for all
$(y^1,\dots,y^m)\in \bV$. 
Then, for all $x\in B_\tau(\bx)$ 
and $(x^1,\dots,x^m)\in \bV$,
\begin{equation}\label{eq:stepsize1}
\bt\!:=\!\min\{1,\gam\eps/3\}\!\le\! \hht\!:=\!
\max\set{t}{\begin{aligned}&f(x+td)\!<\!f(x)-\beta t\norm{g}\\ &t\in\{1,\gam,\gam^2,\dots\}\end{aligned}},
\end{equation}
where $g:=\argmin\set{\norm{v}}{v\in\conv\{\nabla f(x^i)\}_{i=1}^m\}}$ and
$d:=-g/\norm{g}$.
\end{lem}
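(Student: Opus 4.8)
The plan is to reduce the stepsize bound to a single analytic claim and then close it with the Approximate Mean Value Theorem together with the cosine-type inequality of Lemma~\ref{lem:basic proj hyp1}. First I would dispose of the setup: apply Lemma~\ref{lem:basic proj hyp1} with the given $\beta$ to produce $\eta,\del>0$ for which $0\notin G_\eta(\bx)$ and $\ip v u>\beta\norm u^2$ whenever $u,v\in G_\eta(\bx)$ with $\norm u\le\dist 0{G_\eta(\bx)}+\del$, and then feed this $\del$ into Lemma~\ref{lem:loc uniformity} to obtain $\tau\in(0,\eps/3)$ and the open set $\bV$. The heart of the argument is the claim that, for every $x\in B_\tau(\bx)$ and $(x^1,\dots,x^m)\in\bV$, the Armijo inequality $f(x+td)<f(x)-\beta t\norm g$ holds for all $t\in(0,\eps/3]$. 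Granting this, $\hht\ge\bt$ follows from a short geometric-series count: if $\eps/3\ge 1$ then $t=1$ already satisfies Armijo and $\hht=1\ge\bt$; otherwise the largest element $\gam^{j_0}$ of $\{1,\gam,\gam^2,\dots\}$ with $\gam^{j_0}\le\eps/3$ satisfies $\gam^{j_0}>\gam\eps/3$, lies in $(0,\eps/3]$, hence obeys Armijo, so $\hht\ge\gam^{j_0}>\gam\eps/3=\bt$.

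To prove the claim I would argue by contradiction. Suppose Armijo fails at some $t^*\in(0,\eps/3]$, i.e. $f(x+t^*d)\ge f(x)-\beta t^*\norm g$. Applying Theorem~\ref{thm:amvt} to $f$ on the segment from $x$ to $y:=x+t^*d$ with $r:=-\beta t^*\norm g$ (taking a slightly larger constant $\beta'\in(\beta,1)$ in Lemma~\ref{lem:basic proj hyp1} to absorb the boundary case $f(y)-f(x)=r$) yields a point $\hx\in[x,y)$ together with, for any tolerance $\sig>0$, a point $\tx$ with $\norm{\tx-\hx}<\sig$ and a regular subgradient $\tv\in\rsd f(\tx)$ satisfying $\ip{\tv}{y-x}>r$. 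Dividing by $t^*>0$ and using $d=-g/\norm g$ converts this into $\ip{\tv}{d}>-\beta\norm g$, that is, $\ip{\tv}{g}<\beta\norm g^2$.

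The final step contradicts this via Lemma~\ref{lem:basic proj hyp1} with $u=g$ and $v=\tv$. Since $t^*\le\eps/3$, $\norm{\hx-x}\le\eps/3$, and $\norm{x-\bx}\le\tau<\eps/3$, the point $\tx$ lies within $\eps/3+\eps/3+\sig$ of $\bx$; taking $\sig$ small, Lemma~\ref{lem:grad and rsg} gives $\tv\in\rsd f(\tx)\subset G_\mu(\tx)$ for a small radius $\mu$, and monotonicity of $G_\bullet$ places $\tv$ in the relevant $G$-ball about $\bx$. Likewise $g\in\conv\{\nabla f(x^i)\}$ with each $x^i$ within $\eps+\tau$ of $\bx$, and the defining inequality of $\bV\subset V_\eps(\bx,x,\del)$ gives $\norm g=\dist 0{\conv\{\nabla f(x^i)\}}\le\rho_\eps(\bx)+\del$. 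The cosine inequality then yields $\ip{\tv}{g}>\beta\norm g^2$, contradicting the previous paragraph and establishing the claim.

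The main obstacle is precisely this last reconciliation of scales. The sampled gradients, and hence $g$, live in $G_{\eps+\tau}(\bx)$, yet the near-optimality bound compares $\norm g$ against $\rho_\eps(\bx)=\dist 0{G_\eps(\bx)}$, while Lemma~\ref{lem:basic proj hyp1} supplies its inequality at radius $\eta$ with threshold $\dist 0{G_\eta(\bx)}+\del$; because $\dist 0{G_r(\bx)}$ is nonincreasing in $r$, enlarging the radius to capture $g$ simultaneously shrinks the admissible threshold. I would resolve this by invoking Lemma~\ref{lem:basic proj hyp1} at the sampling scale itself (its proof in fact establishes the inequality simultaneously for all sufficiently small radii and tolerances), so that the comparison radius matches $\eps$, and by choosing $\eps$, $\tau$, the mean-value tolerance $\sig$, and the Lemma~\ref{lem:grad and rsg} radius $\mu$ small and in concert so that both $g$ and $\tv$ fall into one common $G$-ball about $\bx$ on which the bound $\norm g\le\rho_\eps(\bx)+\del$ still licenses the cosine inequality. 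Everything else is routine continuity together with the elementary counting argument above.
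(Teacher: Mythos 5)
Your proposal tracks the paper's own proof closely: the same two setup lemmas, the same contradiction via Theorem \ref{thm:amvt}, the same three-piece $\eps/3$ triangle bound locating $\tx$ inside $B_\eps(\bx)$, and the same final clash with the inequality of Lemma \ref{lem:basic proj hyp1}. Two of your refinements address real imprecisions that the paper glosses over: the strict-inequality hypothesis $r<f(y)-f(x)$ in Theorem \ref{thm:amvt} (your $\beta'$ device), and the fact that Lemma \ref{lem:basic proj hyp1}, as stated, only produces \emph{some} radius $\eta$, whereas the proof of the present lemma needs its conclusion at the sampling radius $\eps$ (your remark that the proof of Lemma \ref{lem:basic proj hyp1} in fact yields the inequality for all sufficiently small radius--tolerance pairs is exactly the right repair). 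However, the step you yourself flag as the main obstacle contains a genuine gap. You only establish $g\in G_{\eps+\tau}(\bx)$, so your concluding application of Lemma \ref{lem:basic proj hyp1} must take place on $G_{\eps+\tau}(\bx)$, applied to $u=g$ known only to satisfy $\norm{g}\le\rho_\eps(\bx)+\del$; but at radius $\eps+\tau$ that lemma only covers vectors with $\norm{u}\le\rho_{\eps+\tau}(\bx)+\del'$. No choice of $\tau$ ``in concert'' closes the deficit $\rho_\eps(\bx)-\rho_{\eps+\tau}(\bx)$, because under $\cH$ the map $r\mapsto\rho_r(\bx)$ need not be right-continuous. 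Concretely, take $n=1$, $\bx=0$, $\eps=1$, $\beta=9/10$, and $f(x)=2x$ for $x\le 1$, $f(x)=2+(x-1)+\half(x-1)^2$ for $x>1$, so that $\cD=\R\setminus\{1\}$ and \eqref{eq:limits} holds at $\bx$. Then $G_1(0)=\{2\}$ and $\rho_1(0)=2$, while $G_{1+\tau}(0)=[1,2]$ and $\rho_{1+\tau}(0)=1$ for every $\tau>0$; the pair $u=2$, $v=1$ lies in $G_{1+\tau}(0)$ and satisfies $\norm{u}\le\rho_\eps(\bx)+\del$, yet $\ip{v}{u}=2<\beta\norm{u}^2=3.6$. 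So the inequality you need on the ``common ball'' is simply false for every $\tau>0$.

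The observation you are missing---and the way the paper dissolves the problem before it arises---is the quantifier in Lemma \ref{lem:loc uniformity}: $\bV\subset V_\eps(\bx,x,\del)$ for \emph{every} $x\in B_\tau(\bx)$, in particular for $x=\bx$ itself. Since $V_\eps(\bx,\bx,\del)\subset \D_\eps^m(\bx)$, the sampled points $x^i$ lie in $\bx+\eps\bB$ (not merely in $x+\eps\bB$), whence $\conv\{\nabla f(x^i)\}_{i=1}^m\subset G_\eps(\bx)$ and so $g\in G_\eps(\bx)$ with $\norm{g}\le\rho_\eps(\bx)+\del$. Everything then lives at the single radius $\eps$: your placement of $\tv\in\rsd f(\tx)\subset G_\mu(\tx)\subset G_\eps(\bx)$ via Lemma \ref{lem:grad and rsg} is fine, and Lemma \ref{lem:basic proj hyp1} applied at radius $\eps$ (legitimate for small $\eps$ by your own strengthening) gives $\ip{\tv}{g}>\beta'\norm{g}^2$, contradicting the mean-value estimate. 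Your route could alternatively be patched by noting that the bounded monotone function $r\mapsto\rho_r(\bx)$ has only finitely many jumps exceeding any fixed threshold, and restricting $\eps$ away from those radii---but that is an additional argument your proposal does not supply, and the $x=\bx$ observation makes it unnecessary.
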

\begin{proof}
Since the hypotheses of Lemmas \ref{lem:basic proj hyp1} and
\ref{lem:loc uniformity} are satisfied, the parameters $\eta,\, \del$ and $\tau$ can be
chosen as required.
Set $\widehat G:=\conv\!\{\nabla f(x^i)\}_{i=1}^m$. Since 
$(x^1,\dots,x^m)\!\in \!\bV\!\subset\! V_\eps(\bx,\bx,\del)$, 
Lemma \ref{lem:loc uniformity} tells us
that $\dist{0}{\widehat G}\le \rho_\eps(\bx)+\del$ and 
$\widehat G\subset G_\eps(\bx)$.
Hence, $g\in G_\eps(\bx)$ and $\norm{g}\le\rho_\eps(\bx)+\del$. 
Consequently, Lemma \ref{lem:basic proj hyp1} tells us that
\begin{equation}\label{eq:stepsize2}
\ip{v}{g}>\beta\norm{g}^2\quad\forall\ v\in G_\eps(\bx). 
\end{equation}
Assume to the contrary that the inequality \eqref{eq:stepsize1} is false.
Then $\hht<1$ and so
\[
-\beta \gam^{-1}\hht\norm{g}\le f(x+\gam^{-1}\hht d)-f(x).
\]
By taking $f=\varphi$, $x=x,\ y=x+\gam^{-1}\hht d$ and
$r=-\beta \gam^{-1}\hht\norm{g}$ in Theorem \ref{thm:amvt},
there exists $\hx\in[x,\, x+\gam^{-1}\hht d)$ such that for all $\teps>0$ there exists
$(\tx,f(\tx))\in B_\eps(\hx,f(\hx))$ and $\tv\in\rsd f(\tx)$ such that
\[
-\beta \gam^{-1}\hht\norm{g}< \gam^{-1}\hht\ip{\tv}{d},
\]
or equivalently,
\[
\ip{\tv}{g}< \beta \norm{g}^2.
\]
So $\tv\notin G_\eps(\bx)$ by \eqref{eq:stepsize2}.
Assume that we have chosen $\teps\in(0,\eps/3)$.
Since the inequality \eqref{eq:stepsize1} is false, 
$\hht<\gam\eps/3$ or equivalently,
$\gam^{-1}\hht \norm{d}<\eps/3$.
Consequently, $\tx\in B_\eps(\bx)$ and $\sd f(\tx)\subset G_\eps(\bx)$.  
Therefore, $\tv\in\rsd f(\tx)\subset\sd f(\tx)\subset G_\eps(\bx)$. This contradiction establishes 
the result.
\end{proof}

The main convergence result for the GS Algorithm now follows. Our proof
is inspired by Kiwiel's proof of \cite[Theorem 3.3]{Kiwi07}.

\begin{thm}[Convergence: $0=\nu_{opt}=\eps_{opt}$]\label{thm:convergence}
Suppose $\map{f}{\Rn}{\bR}$ satisfies $\cH$. 
Let $\{x^k\}$ be a sequence generated by the GS Algorithm with
$\nu_0,\, \eps_0\in\bR_{++}$, $\theta_\eps,\,\theta_\nu\in(0,1)$, and $\eps_{opt}=\nu_{opt}=0$.
With probability 1 the algorithm does not terminate 
in line \ref{step.terminate0} 
and 
one of the following must occur: 
\begin{enumerate}[label=(\alph*)]
\item 
There is a $k_0\in\bN$ such that $\nabla f(x^{k_0})=0$ and 
the algorithm terminates.
\item 
$f(x^k)\downarrow -\infty$. 
\item \label{limit1}
$0<\bar\nu:=\inf_k\nu_k$ and the sequence converges 
to some $\bx\in\Rn$ for which at least one of the three
conditions in \eqref{eq:limits} must be violated, that is, either
\begin{equation}\label{eq:Limits}
\emptyset=\csd f(\bx),\quad 0\in\csd f(\bx),\quad \text{ or } 
-\proj{\csd f(\bx)}(0)\notin\intr(\hcsd f(\bx))^*. 
\end{equation}
\item \label{limit2}
$\nu_k\downarrow 0$ and every cluster point $\bx$ of $\{x^k\}$
(if one exists) 
satisfies $0\in\csd f(\bx)$.
\end{enumerate}
Moreover, if $f$ is locally Lipschitz, then outcome (c) cannot occur.
\end{thm}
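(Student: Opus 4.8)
The plan is to first settle the probability-one assertions and the coarse case split, then handle the two substantive alternatives (c) and (d), and finally the locally Lipschitz addendum. Since $\cD$ has full measure, the $m$ points drawn uniformly from $x^k+\eps_k\bB$ in step \ref{step.sample} lie in $\cD$ with conditional probability one, so a union bound over the countably many iterations shows that, almost surely, step \ref{step.terminate0} is never reached and the differentiable replacement required in step \ref{step.update} always exists. Because $\epsopt=\nuopt=0$ and $\eps_k>0$ throughout, the only way to stop in step \ref{step.terminate} is $\nabla f(x^{k_0})=0$, which is (a); so I may assume the method runs forever. On a ``then'' step $t_k=0$ and $x^{k+1}=x^k$, while on an ``else'' step the Armijo rule gives $f(x^{k+1})<f(x^k)-\beta t_k\norm{g^k}$, so $\{f(x^k)\}$ is nonincreasing and either drops to $-\infty$ (outcome (b)) or converges. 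Finally $\{\nu_k\}$ is nonincreasing and is scaled by $\rednu\in(0,1)$ exactly on the ``then'' steps, so either these occur finitely often and $\bar\nu:=\inf_k\nu_k>0$ (the regime of (c)), or infinitely often and $\nu_k\downarrow0$, $\eps_k\downarrow0$ (the regime of (d)).

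For (c), after the last ``then'' step every step is ``else'', so $\eps_k\equiv\beps$ and $\norm{g^k}>\bar\nu$. Telescoping the Armijo decrease against $\lim f(x^k)$ gives $\sum_k t_k\norm{g^k}<\infty$, hence $\sum_k t_k<\infty$ and $t_k\to0$; since $\norm{x^{k+1}-x^k}\le2t_k$ the iterates are Cauchy and converge to some $\bx$. I then show a condition in \eqref{eq:limits} must fail at $\bx$ by contradiction: if all three hold then $-\proj{\csd f(\bx)}(0)\in\intr(\hcsd f(\bx))^*$ forces $\intr(\hcsd f(\bx))^*\ne\emptyset$, so $\hsd f(\bx)$ is pointed by Lemma \ref{lem:pointed cones}, and with $\csd f(\bx)\ne\emptyset$ the hypotheses of Lemmas \ref{lem:basic proj hyp1}, \ref{lem:loc uniformity} and \ref{lem:stepsize} hold for $\eps=\beps$. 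These produce $\del,\tau,\bt>0$ and a nonempty open $\bV$ with $t_k\ge\bt$ whenever $x^k\in B_\tau(\bx)$ and $(x^{k,1},\dots,x^{k,m})\in\bV$; the bound transfers to the algorithm's direction because its proof uses only $g^k\in G_{\beps}(\bx)$ and $\norm{g^k}\le\rho_{\beps}(\bx)+\del$, both of which hold since $\Gcal^k\subset G_{\beps}(\bx)$ contains $\conv\{\nabla f(x^{k,i})\}_{i=1}^m$. For large $k$, $x^k\in B_\tau(\bx)$ and $\bV\subset\prod_1^m(x^k+\beps\bB)$, so each event $\{(x^{k,1},\dots,x^{k,m})\in\bV\}$ has the fixed positive probability $\mathrm{vol}(\bV)/\mathrm{vol}(\beps\bB)^m$; being independent, by the second Borel--Cantelli lemma they occur infinitely often almost surely, giving $t_k\ge\bt$ infinitely often and contradicting $t_k\to0$. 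Hence \eqref{eq:Limits} holds.

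For (d), at the infinitely many ``then'' steps $\ell$ one has $\norm{g^\ell}\le\nu_\ell\to0$, $\eps_\ell\to0$ and $g^\ell\in G_{\eps_\ell}(x^\ell)$, so Lemma \ref{lem:osc G}(a) gives $0\in\csd f(\bx^*)$ at every cluster point $\bx^*$ of the ``then''-iterate subsequence (in particular when the method stalls at one point and merely shrinks $\eps$). To reach an arbitrary cluster point $\bx$ of $\{x^k\}$ I would argue contrapositively: if all three conditions in \eqref{eq:limits} held at $\bx$, then $\rho_*:=\dist{0}{\csd f(\bx)}>0$ by Theorem \ref{thm:grad approx}, and from the inclusions $G_{\eps-\del}(\bx)\subset G_\eps(x)\subset G_{\eps+\del}(\bx)$ (for $\norm{x-\bx}\le\del$) one gets $\rho_{\eps_k}(x^k)\ge\rho_*/2$ on a neighborhood, whence $\norm{g^k}\ge\rho_*/2>\nu_k$ for all large $k$ near $\bx$; thus only ``else'' steps occur there and Lemma \ref{lem:stepsize} with the Borel--Cantelli argument forces infinitely many descents of at least $\beta\bt\rho_*/2$, contradicting the convergence of $\{f(x^k)\}$. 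Hence no $\bx$ satisfying all of \eqref{eq:limits} can be a cluster point, so every cluster point violates some condition there; the crux of (d) is then to show that the violated condition is always $0\in\csd f(\bx)$, i.e. to exclude cluster points with $\csd f(\bx)=\emptyset$ or with \eqref{eq:interior} failing.

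The locally Lipschitz addendum is the cleanest part and I would do it last. There $\csd f(\bx)$ is nonempty and compact and $\hcsd f(\bx)=\{0\}$, so $(\hcsd f(\bx))^*=\Rn$ and the last two conditions in \eqref{eq:limits} hold at every point. If (c) occurred then $\bar\nu>0$ and $x^k\to\bx$; if $0\notin\csd f(\bx)$ all three conditions hold and the (c)-contradiction applies, while if $0\in\csd f(\bx)$ then $\rho_{\beps}(\bx)\le\dist{0}{\csd f(\bx)}=0$, so Lemma \ref{lem:loc uniformity} with $\del=\bar\nu/2$ yields a nonempty open $\bV$ on which $\norm{g^k}\le\rho_{\beps}(\bx)+\bar\nu/2=\bar\nu/2<\nu_k$, and Borel--Cantelli then forces a ``then'' step infinitely often, contradicting $\bar\nu>0$. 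Either way (c) is impossible. The central difficulty throughout is the interface between the random sampling and the deterministic descent estimates, distilled into the Borel--Cantelli step that converts the fixed positive probability of a ``good'' sample into a guaranteed uniform stepsize or a guaranteed small $\norm{g^k}$. The single most delicate point is precisely the exclusion just mentioned: at \emph{degenerate} cluster points---those with $\csd f(\bx)=\emptyset$ or with \eqref{eq:interior} failing---Lemma \ref{lem:stepsize} is unavailable, and one must either show such limits are incompatible with $\nu_k\downarrow0$ (so that they can only arise in regime (c)) or rule them out directly via the directionally Lipschitz structure, through the pointedness of $\hsd f$ exploited in Lemma \ref{lem:osc G}.
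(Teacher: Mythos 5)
Your setup, the regime split ($\bar\nu>0$ versus $\nu_k\downarrow 0$), your treatment of case (c), and your direct argument for the locally Lipschitz addendum all essentially track the paper (the paper disposes of the addendum by citing Kiwiel, but your direct argument is sound once (c) is in place). The genuine gap is case (d), and it is exactly the point you yourself flag as ``the crux'': outcome (d) asserts $0\in\csd f(\bx)$ at \emph{every} cluster point, whereas your contrapositive argument delivers only that every cluster point violates one of the three conditions in \eqref{eq:limits} --- that is, the trichotomy \eqref{eq:Limits}, which is the conclusion of (c), not of (d). You never exclude cluster points with $\csd f(\bx)=\emptyset$, or with $0\notin\csd f(\bx)$ but $-\proj{\csd f(\bx)}(0)\notin\intr(\hcsd f(\bx))^*$, so (d) remains unproven. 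Moreover, the mechanism you use in that contrapositive step is itself unsound in regime (d): Lemmas \ref{lem:loc uniformity} and \ref{lem:stepsize} produce a \emph{fixed} nonempty open set $\bV$ contained in $\prod_1^m\left((x+\eps\bB)\cap\cD\right)$ for a \emph{fixed} radius $\eps$, while in regime (d) the algorithm samples from $x^k+\eps_k\bB$ with $\eps_k\downarrow 0$; once $\eps_k$ is small, the sampling region no longer covers (and eventually misses) $\bV$, so the events $\left\{(x^{k,1},\dots,x^{k,m})\in\bV\right\}$ do not have probability bounded below and Borel--Cantelli yields nothing. This is precisely why the paper deploys the sampling/probability argument only in case (c), where $\eps_k$ is eventually a constant $\beps$ and the iterates converge into $B_\tau(\bx)$.

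The paper closes case (d) by a purely deterministic argument that needs neither Lemma \ref{lem:stepsize} nor any probabilistic reasoning, and in particular never has to exclude degenerate cluster points. Fix a cluster point $\bx$ and suppose no subsequence satisfies $x^k\rightarrow\bx$ and $\norm{g^k}\rightarrow 0$ simultaneously. Since ``then'' steps occur infinitely often with $\norm{g^k}\le\nu_k\downarrow 0$, this forces $x^k\not\rightarrow\bx$, and it also forces the existence of $\bnu>0$ such that $\norm{g^k}>\bnu$ whenever $\norm{x^k-\bx}\le\bnu$. The telescoped Armijo decrease gives $\sum_k\norm{x^{k+1}-x^k}\,\norm{g^k}<\infty$ (inequality \eqref{eq:finite sum}), so the total displacement accumulated over the iterations with $x^k\in B_{\bnu}(\bx)$ is finite; but since $\bx$ is a cluster point and not the limit, the iterates enter $B_{\bnu/3}(\bx)$ and subsequently exit $B_{\bnu}(\bx)$ infinitely often, and each such excursion consumes displacement at least $\bnu/3$ while inside $B_{\bnu}(\bx)$, contradicting that finiteness. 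Hence a subsequence with $x^k\rightarrow\bx$ and $g^k\rightarrow 0$ must exist, and Lemma \ref{lem:osc G}(a), applied with $g^k\in G_{\eps_k}(x^k)$ and $\eps_k\downarrow 0$, gives $0\in\csd f(\bx)$ directly. Your proposal is missing exactly this idea; supplying it (in place of your Borel--Cantelli step for (d)) completes the proof.
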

\begin{proof} If $f$ is locally Lipschitz, then the result follows
from \cite[Theorem 3.3]{Kiwi07}. 
The assumptions on the function $f$ imply that, with probability 1, the 
algorithm does not terminate in line \ref{step.terminate0}
of the GS Algorithm, so we assume 
$\{x^{k,1},\dots,x^{k,m}\}\subset\cD$ for all $k$.
We also assume that neither (a) nor (b) occur and show that either 
(c) or (d) must occur. 
Let $J\subset\bN$ be those iterations for which $x^k\ne x^{k+1}$.
Observe that if $J$ is finite with
maximum value $k_0$, then $\nabla f(x^{k_0})=0$, 
hence, $J$ is infinite.
Since
\[
f(x^{k+1})\le f(x^k)-\beta t_k\norm{g^k}\quad \forall\, k\in \bN,
\]
the sequence $\{f(x^k)\}$ is non-increasing and bounded below, and
so has has a limit $\tf$. Summing this inequality over $k$ and taking the 
limit tells us that
\begin{equation}\label{eq:finite sum}
\beta\sum_{k=1}^\infty \norm{x^{k+1}-x^k}\norm{g^k}\le
\beta\sum_{k=1}^\infty t_k\norm{g^k}\le f(x^0)-\tf <\infty,
\end{equation}
where the first inequality follows from lines (viii)-(x) of the GS algorithm.
In particular, $\norm{x^{k+1}-x^k}\norm{g^k}{\rightarrow} 0$. 
We decompose this fact into
two mutually exclusive possibilities: either 
$0<\bar\nu:=\inf_k\nu_k$ or 
$\nu_k\downarrow 0$. 

Let us first suppose that $0<\bar\nu:=\inf_k\nu_k$. 
By lines (v) and (vi) of the algorithm, $0<\bar\eps:=\inf_k\eps_k$ and 
$\bar\nu\le\inf_{k\in J}\norm{g^k}$.  Therefore,  
\eqref{eq:finite sum} tells us that $\sum_{k=1}^\infty \norm{x^{k+1}-x^k}<\infty$
and $t_k\downarrow 0$. In particular, this implies that
the sequence $\{x^k\}$ is Cauchy, and so there
exists $\bx$ such that $x^k\rightarrow \bx$. Assume to the contrary that
none of the conditions in \eqref{eq:Limits} hold, or equivalently, the hypotheses
of Lemma \ref{lem:stepsize} \eqref{eq:limits} hold at $\bx$. 
Let $\eps,\,\eta,\, \del,\, \tau\in\bR_{++}$
and $\bV\subset\Rn$ be an open set satisfying the conditions of 
Lemma \ref{lem:stepsize}. We may assume that $\tau<\inf_k\eps_k$.
Since for all $k$ sufficiently large $t_k<\gam\eps/3$,
we must have $(x^{k1},\dots,x^{km})\notin \bV$ for all large $k$.
But since $\bV$ is open, the probability of this event is zero. 
Hence, with probability 1, Lemma \ref{lem:stepsize} tells us that at least
one of the three conditions in \eqref{eq:limits}
must hold, that is, (c) is satisfied.

Finally, suppose that $\nu_k\downarrow 0$. 
By line (vi) of the algorithm, 
$\eps_k\downarrow 0$. 
Let $\bx$ be a cluster point of the sequence $\{x^k\}$.
If there is any subsequence $\hJ\subset\bN$ such that
\begin{equation}\label{eq:good subseq}
x^k\overset{\hJ}{\rightarrow}\bx\ \text{ and }\ 
\norm{g^k}\overset{\hJ}{\rightarrow}0, 
\end{equation}
then 
$0\in\csd f(\bx)$ by Lemma \ref{lem:osc G}.
Therefore, we assume that no such subsequence exists
and establish a contradiction.
In particular, this implies that $x^k\not\rightarrow \bx$.
Since no subsequence satisfies \eqref{eq:good subseq}, 
there exist $\bnu>0$ such that if
$\norm{x^k-\bx}\le\bnu$, then $\norm{g^k}>\bnu$; otherwise, 
there exists $\hJ\subset\bN$ and $\bnu_k\downarrow_\hJ 0$ and  such that
$\norm{x^k-\bx}\le\bnu_k$ and $\norm{g^k}\le\bnu_k$ for all $k\in\hJ$
which implies that $\hJ$ satisfies
\eqref{eq:good subseq}, a contradiction.
Since $\bx$ is a cluster point, the set
$K:=\set{k}{\norm{x^k-\bx}\le \bar\nu}$ is infinite with $\norm{g^k}>\bar\nu$
for all $k\in K$. 
Since $x^k\not\rightarrow \bx$, we can reduce $\bar\nu$ if necessary so that the
set $\bN\setminus K$ is infinite. 
Observe that inequality \eqref{eq:finite sum} tell us that
$\sum_{k\in K}\norm{x^k-\bx}<\infty$. 
Let $\hK:=\set{k}{\norm{x^k-\bx}\le \bar\nu/3}$. Again $\hK$ is infinite
since $\bx$ is a cluster point of $\{x^k\}$.
Both $K$ and $\hK\subset K$ 
define subsequences of $\{x^k\}$. 
Since $\bN\setminus K$ is infinite,
for each $k\in \hK$ there is an
$\hk>k$ such that $\hk\notin K$ but $x^i\in K$ for $k\le i<\hk$. 
By construction, $\norm{x^\hk-x^k}\ge\bar\nu/3$
for all $k\in\hK$; otherwise, $x^\hk\in K$, a contradiction.
By the triangle inequality, we have
$\bar\nu/3\le\norm{x^\hk-x^k}\le\sum_{i=k}^{\hk -1}\norm{x^{i+1}-x^i}$
for all $k\in\hK$. But $\sum_{k\in K}\norm{x^k-\bx}<\infty$ 
and $\hK\subset K$ so that
$\sum_{i=k}^{\hk -1}\norm{x^{i+1}-x^i}\overset{\hK}{\rightarrow}0$.
This contradiction implies that our assumption that there is 
no subsequence satisfying \eqref{eq:good subseq} is false.
That is, $0\in\csd f(\bx)$.
\end{proof}

In the Lipschitzian case, Theorem \ref{thm:convergence} differs from
\cite[Theorem 3.3]{Kiwi07} with the introduction of possible outcome (c).
Kiwiel's proof of \cite[Theorem 3.3]{Kiwi07} shows that the case
$0<\bar\nu:=\inf_k\nu_k$ does not occur if $f$ is locally Lipschitz continuous.
The absence of the case (c) requires that $\csd f$ is an osc, compact, convex valued 
operator whose domain is all of $\Rn$, in particular, it requires that 
$f$ be locally Lipschitz. 
On the other hand, if $f$ is not locally Lipschitz, then 
$\csd f$ is not locally bounded and possibly empty at some points.
These possibilities are reflected in the outcome (c), and only in (c). 
This does not imply that $\csd f(\bx)$ is bounded in outcome (d), but 
outcome (d) does require that $\csd f(\bx)$ be nonempty. 
Note that outcome (c) signals why $\nu_k$ is not reduced to zero.
These observations are reviewed in our final comments. 
We conclude this section by stating two corollaries that describe the behavior of
the algorithm under standard variations in the choice of 
of initial parameters.

\begin{cor}[Convergence: $0<\eps_{opt},\ 0<\nu_{opt}$]
\label{cor:convergence1}
Suppose $\map{f}{\Rn}{\bR}$ satisfies $\cH$. 
Let $\{x^k\}$ be a sequence generated by the GS Algorithm with
$\nu_0,\, \eps_0\in\bR_{++}$, $\theta_\eps,\,\theta_\nu\in(0,1)$
and $0<\eps_{opt},\ 0<\nu_{opt}$.
With probability 1 the algorithm does not terminate 
in line \ref{step.terminate0} 
and 
one of the following must occur: 
\begin{enumerate}[label=(\alph*)]
\item 
There is a $k_0\in\bN$ such that 
$\dist{0}{\csd_{\eps_{opt}} f(x^{k_0})}\le\nu_{opt}$ and 
the algorithm terminates.
\item 
$f(x^k)\downarrow -\infty$. 
\item \label{limit2.1}
$\nu_{opt}<\bar\nu:=\inf_k\nu_k$ and the sequence converges 
to some $\bx\in\Rn$ at which at least one of the three
statements in \eqref{eq:Limits} is true. 
\end{enumerate}
\end{cor}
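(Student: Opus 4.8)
The plan is to reuse the proof of Theorem~\ref{thm:convergence} almost verbatim, the only genuinely new ingredient being the effect of the strictly positive tolerances $\epsopt,\nuopt>0$ together with $\redeps,\rednu\in(0,1)$. First I would note, exactly as there, that since $\cD$ has full measure and the $m$ points drawn in step~\ref{step.sample} are independent and uniform on a ball, with probability $1$ one has $\{x^{k,1},\dots,x^{k,m}\}\subset\cD$ for every $k$, so the algorithm almost surely does not stop in step~\ref{step.terminate0}. Assuming that neither (a) nor (b) occurs, the descent inequality $f(x^{k+1})\le f(x^k)-\beta t_k\norm{g^k}$ and the boundedness below of $\{f(x^k)\}$ give the summability~\eqref{eq:finite sum}, so that $\norm{x^{k+1}-x^k}\,\norm{g^k}\to 0$. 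I would also record that outcome (a) is nothing but termination in step~\ref{step.terminate}: at such a $k_0$ one has $\norm{g^{k_0}}\le\nuopt$ and $\eps_{k_0}\le\epsopt$, and since $g^{k_0}$ is a convex combination of gradients evaluated at points of $x^{k_0}+\epsopt\bB$, each lying in $\csd_{\epsopt}f(x^{k_0})$, this records exactly the $(\epsopt,\nuopt)$-approximate stationarity claimed in (a).

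The crux is to show that, once (a) and (b) are excluded, the two-way split of Theorem~\ref{thm:convergence} collapses onto its first branch, namely $\nuopt<\bnu:=\inf_k\nu_k$. This is where the positive tolerances are used. The pair $(\nu_k,\eps_k)$ is multiplied by $(\rednu,\redeps)$ precisely at those iterations where $\norm{g^k}\le\nu_k$, so both sequences are nonincreasing. Were these reductions to occur infinitely often, then, because $\redeps,\rednu\in(0,1)$, both $\nu_k\downarrow 0$ and $\eps_k\downarrow 0$; but then at some reduction index $k$ one would have $\eps_k\le\epsopt$ together with $\norm{g^k}\le\nu_k\le\nuopt$, and step~\ref{step.terminate} would fire, contradicting the failure of (a). Hence reductions happen only finitely often, $(\nu_k,\eps_k)\equiv(\bnu,\beps)$ with $\bnu,\beps>0$ for all large $k$. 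I expect this to be the main obstacle: it is precisely the point at which the argument departs from Kiwiel's Lipschitz analysis, and one must verify carefully that the synchronized shrinking of $\eps_k$ and $\nu_k$ cannot carry the iterates into the stopping regime without triggering step~\ref{step.terminate}, so that necessarily $\nuopt<\bnu$, rather than silently driving the criticality measure to zero as in outcome (d) of Theorem~\ref{thm:convergence}.

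With $\nuopt<\bnu$ in hand, the remainder mirrors the ``$0<\bnu$'' branch of Theorem~\ref{thm:convergence}. As $\norm{g^k}>\bnu$ for all large $k$ on the (now cofinite) set of active iterations, \eqref{eq:finite sum} sharpens to $\sum\norm{x^{k+1}-x^k}<\infty$ and $t_k\to0$; hence $\{x^k\}$ is Cauchy and converges to some $\bx$. I would then argue by contradiction: if all three conditions~\eqref{eq:limits} held at $\bx$, then Lemma~\ref{lem:stepsize} (whose hypotheses are met through Lemma~\ref{lem:basic proj hyp1} and Lemma~\ref{lem:loc uniformity}) would furnish $\eps,\tau>0$ and an open set $\bV$ such that every active iterate with $x^k\in B_\tau(\bx)$ and sample $(x^{k,1},\dots,x^{k,m})\in\bV$ has accepted step size $t_k\ge\min\{1,\gam\eps/3\}$. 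Since $t_k\to0$, for all large $k$ the sample must avoid $\bV$; but $\bV$ is open, so this event has probability $0$. This contradiction shows that at least one statement in~\eqref{eq:Limits} holds at $\bx$, i.e.\ outcome (c).
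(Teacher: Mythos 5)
Your proposal is correct and takes essentially the same route as the paper: the decisive observation---that infinitely many executions of the reduction step would drive $\nu_k$ and $\eps_k$ below the positive tolerances and force termination in step (iv), contradicting the exclusion of (a), so that reductions occur only finitely often and $\nuopt<\bnu$---is exactly the paper's argument. The only cosmetic difference is that the paper then simply invokes Theorem \ref{thm:convergence} (whose outcome (d) is excluded since $\bnu>0$) to land in (c), whereas you re-run its first-branch Cauchy/open-set probability argument inline, which is if anything slightly more careful, since that theorem is formally stated for $\epsopt=\nuopt=0$ and it is really its proof, not its statement, that is being reused.
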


\begin{proof}
By assumption the algorithm does not terminate in line \ref{step.terminate0}
of the GS Algorithm
with probability 1, so we assume 
$\{x^{k,1},\dots,x^{k,m}\}\subset\cD$ for all $k$.
Next we assume that neither (a) nor (b) occur and show that (c) must occur.
Since (a) does not occur and $\nabla f(x^k)\in\csd_{\eps_k}f(x^k)$, the algorithm does not terminate in step (iv)
and step (v) of the algorithm occurs at most finitely many times. 
Therefore,
$\nu_{opt}<\bar\nu:=\inf_k\nu_k$, 
the algorithm does not terminate and the sequence $\{x^k\}$ is infinite.
Consequently, Theorem \ref{thm:convergence} tells us that (c) must occur
and the final statement of the corollary follows.
\end{proof}

\begin{cor}[Convergence: $0\!<\!\eps_{opt}\!=\!\eps_0,\, 0\!=\!\nu_{opt}\!=\!\nu_0$]
\label{cor:convergence2}
Suppose $\map{f}{\Rn}{\bR}$ satisfies $\cH$. 
Let $\{x^k\}$ be a sequence generated by the GS Algorithm with
$\nu_{opt}=\nu_0=0$, 
$\eps_{opt}=\eps_0>0$ and $0=\theta_\nu,\ 1=\theta_\eps$.
Let $J\subset\bN$ be those iterations for which $x^k\ne x^{k+1}$.
With probability 1 the algorithm does not terminate 
in line \ref{step.terminate0} 
and 
one of the following must occur: 
\begin{enumerate}[label=(\alph*)]
\item 
The algorithm terminates at some iteration $k_0\in\bN$ with either 
$\nabla f(x^{k_0})=0$ or $g^{k_0}=0$, and consequently
 $0\in\csd_{\eps_{opt}} f(x^{k_0})$.
\item 
$f(x^k)\downarrow -\infty$. 
\item \label{limit2.1c}
The sequence $\{x^k\}$ is infinite with $\inf_{k\in J}\norm{g^k}>0$ in which case
there exists $\bx\in\Rn$ such that
$x^k\rightarrow \bx$ and at least one of the conditions in 
\eqref{eq:limits} is satisfied. 
\item \label{limit2.2c}
The sequence $\{x^k\}$ is infinite with $\inf_{k\in J}\norm{g^k}=0$ in which case
every cluster point $\bx$ of $\{x^k\}$
(if one exists) 
satisfies $0\in\csd f(\bx)$.
\end{enumerate}
\end{cor}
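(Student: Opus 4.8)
The plan is to mirror the proof of Theorem~\ref{thm:convergence}, specializing to the present parameters. First I record their effect: since $\nu_0=0$ and $\rednu=0$, line~(vi) is reachable only when $\norm{g^k}\le\nu_k=0$, so $\nu_k\equiv 0$; and since $\eps_0=\epsopt$ and $\redeps=1$, line~(vi) would leave $\eps_{k+1}=\eps_k$, so $\eps_k\equiv\epsopt$. As in Theorem~\ref{thm:convergence}, hypothesis $\cH$ ensures that with probability $1$ the algorithm does not stop in line~\ref{step.terminate0}, and (since $\cD$ is open) it places us in the ``$\cQ$ open'' branch of Lemmas~\ref{lem:basic proj hyp1} and~\ref{lem:stepsize}. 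For outcome~(a), termination can occur only in step~(iv); because $\eps_k=\epsopt\le\epsopt$ always, this forces $\nabla f(x^{k_0})=0$ or $\norm{g^{k_0}}\le\nuopt=0$. In either case the vanishing vector lies in $\Gcal^{k_0}\subset G_{\epsopt}(x^{k_0})$, so $\rho_{\epsopt}(x^{k_0})=\dist{0}{G_{\epsopt}(x^{k_0})}=0$ (see \eqref{eq:rho}), the $\epsopt$-approximate stationarity recorded in~(a). Now assume neither (a) nor (b) occurs. Since (a) fails, $g^k\ne 0$ for all $k$, so the test in step~(v) (namely $\norm{g^k}\le\nu_k=0$) never holds and every iteration runs the Armijo step~(vii); as \eqref{eq.armijo} enforces strict descent, $x^{k+1}\ne x^k$ for all $k$, whence $J=\bN$, the sequence is infinite, and $\inf_{k\in J}\norm{g^k}=\inf_k\norm{g^k}$. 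Because (b) fails, $f(x^k)\downarrow\tf>-\infty$, and \eqref{eq:finite sum} gives $\sum_k\norm{x^{k+1}-x^k}\,\norm{g^k}<\infty$.

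For outcome~(c), suppose $\inf_k\norm{g^k}>0$. Then \eqref{eq:finite sum} yields $\sum_k\norm{x^{k+1}-x^k}<\infty$ and $t_k\to 0$, so $\{x^k\}$ is Cauchy and $x^k\to\bx$. Arguing exactly as in the $0<\inf_k\nu_k$ branch of Theorem~\ref{thm:convergence}, I suppose all three conditions of \eqref{eq:limits} hold at $\bx$. Lemma~\ref{lem:stepsize}, applied with $\eps=\epsopt$, then produces $\tau>0$ and a fixed non-empty open set $\bV$ giving the uniform lower bound $t_k\ge\bt:=\min\{1,\gam\epsopt/3\}>0$ whenever $x^k\in B_\tau(\bx)$ and $(x^{k,1},\dots,x^{k,m})\in\bV$. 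Since $t_k\to 0$, for all large $k$ the samples must avoid the fixed open set $\bV$, an event of probability zero. Hence, with probability $1$, at least one condition in \eqref{eq:limits} fails at $\bx$, i.e. \eqref{eq:Limits} holds, which is outcome~(c).

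For outcome~(d), suppose $\inf_k\norm{g^k}=0$ and let $\bx$ be a cluster point of $\{x^k\}$. Following the $\nu_k\downarrow 0$ branch of Theorem~\ref{thm:convergence}, I first show some subsequence satisfies \eqref{eq:good subseq}. Otherwise there is $\bnu>0$ with $\norm{g^k}>\bnu$ on the infinite set $K:=\set{k}{\norm{x^k-\bx}\le\bnu}$; shrinking $\bnu$ so that $\bN\setminus K$ is infinite, \eqref{eq:finite sum} gives $\sum_{k\in K}\norm{x^{k+1}-x^k}<\infty$, and the ``crossing'' argument---for each $k\in\hK:=\set{k}{\norm{x^k-\bx}\le\bnu/3}$ follow the first index $\hk>k$ with $\hk\notin K$---forces $\bnu/3\le\sum_{i=k}^{\hk-1}\norm{x^{i+1}-x^i}\to 0$, a contradiction. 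Along the resulting subsequence $x^k\to\bx$, $\norm{g^k}\to 0$, with $g^k\in G_{\epsopt}(x^k)$; continuity of $\nabla f$ on $\cD$ makes $x\mapsto G_{\epsopt}(x)$ outer semicontinuous, so the limit gives $0\in G_{\epsopt}(\bx)$, i.e. $\rho_{\epsopt}(\bx)=0$.

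I expect this last passage to the limit to be the main obstacle. With the radius frozen at $\epsopt$, Lemma~\ref{lem:osc G}---which requires $\del_k\downarrow 0$---is unavailable, so the argument only delivers the $\epsopt$-approximate stationarity $0\in G_{\epsopt}(\bx)$ rather than the exact inclusion $0\in\csd f(\bx)$ appearing in the stated~(d); moreover no pointedness of $\hsd f(\bx)$ is assumed at a bare cluster point, so Theorem~\ref{thm:grad approx} cannot be invoked to identify $\bigcap_{\del>0}G_\del(\bx)$ with $\csd f(\bx)$. Recovering the exact inclusion would require letting the sampling radius vanish, as in Theorem~\ref{thm:convergence}(d); with the radius fixed, the honest conclusion---and the one I would prove---is the $\epsopt$-stationarity $0\in G_{\epsopt}(\bx)$.
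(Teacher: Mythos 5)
Your proposal reproduces the paper's own proof of Corollary~\ref{cor:convergence2} almost step for step: the same specialization of the parameters ($\nu_k\equiv 0$, $\eps_k\equiv\epsopt$, so step (v) never triggers and every surviving iteration runs the line search), the same observation that $J$ finite forces termination in step (iv), the same use of \eqref{eq:finite sum}, the same split into $\inf_{k\in J}\norm{g^k}>0$ versus $\inf_{k\in J}\norm{g^k}=0$, the same Cauchy-plus-Lemma~\ref{lem:stepsize} probability argument for outcome (c), and the same crossing argument for outcome (d). (Your handling of (a) via $\rho_{\epsopt}(x^{k_0})=0$ in \eqref{eq:rho} is in fact slightly more careful than the paper's unproved ``consequently $0\in\csd_{\epsopt}f(x^{k_0})$,'' since the paper defines $\csd_\eps f$ as a union, not a convexified union, and $g^{k_0}=0$ only places $0$ in a convex hull of such subgradients.)

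The one place you part company with the paper is the final step of (d), and there your diagnosis is correct: the gap you flag is a gap in the paper's own proof, not a defect of your argument. The paper disposes of (d) with the sentence ``Following the proof of Theorem~\ref{thm:convergence}, we again find that $0\in\csd f(\bx)$,'' but the step of Theorem~\ref{thm:convergence} being invoked is precisely the application of Lemma~\ref{lem:osc G}(a) to the subsequence satisfying \eqref{eq:good subseq}, and that lemma requires radii $\del_k\downarrow 0$. In Theorem~\ref{thm:convergence}(d) this is supplied by $\eps_k\downarrow 0$; here $\redeps=1$ freezes $\eps_k\equiv\epsopt>0$, so the lemma gives nothing. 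As you observe, passing to the limit in $g^k\in G_{\epsopt}(x^k)$, $x^k\rightarrow\bx$, $g^k\rightarrow 0$ yields only $0\in G_{\eps'}(\bx)$ for every $\eps'>\epsopt$ (modulo the open/closed-radius caveat in your appeal to outer semicontinuity of $G_{\epsopt}(\cdot)$), i.e.\ $\epsopt$-approximate stationarity, not $0\in\csd f(\bx)$. This is also what the locally Lipschitz precedent predicts: with a frozen sampling radius, the results of Burke--Lewis--Overton (2005) and Kiwiel (2007) conclude only that cluster points are $\eps$-stationary in the Goldstein sense, and exact Clarke stationarity is recovered only by driving the radius to zero. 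So either outcome (d) of the corollary should be weakened to the $\epsopt$-stationarity you prove (in parallel with outcome (a)), or a genuinely new argument, not present in the paper, is required to justify it as stated.
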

\begin{proof}
The proof strategy follows that of the Theorem \ref{thm:convergence}.
By assumption the algorithm does not terminate in line \ref{step.terminate0}
of the GS Algorithm
with probability 1, so we assume 
$\{x^{k,1},\dots,x^{k,m}\}\subset\cD$ for all $k$.
We also assume that neither (a) nor (b) occur and show that either 
(c) or (d) must occur. 
Observe that if $J$ is finite with
maximum value $k_0$, then, by step (iv) of the algorithm, (a) occurs, 
hence, $J$ is infinite. Following the proof of Theorem \ref{thm:convergence}, 
we have that 
\eqref{eq:finite sum} holds.
We analyze the two mutually exclusive possible 
outcomes $\inf_{k\in J}\norm{g^k}>0$ and $\inf_{k\in J}\norm{g^k}=0$ separately.

First suppose that $\bnu:=\inf_{k\in J}\norm{g^k}>0$. By \eqref{eq:finite sum}, the
sequence $\{x^k\}$ is Cauchy so that $x^k\rightarrow \bx$ for some 
$\bx\in\Rn$. The argument used in Theorem \ref{thm:convergence} 
applies to show that
one of the conditions in \eqref{eq:limits} is satisfied.

Next suppose that $\inf_{k\in J}\norm{g^k}=0$ and $\bx$ is
a cluster point of the sequence $\{x^k\}$.
As in the proof
of Theorem \ref{thm:convergence}, assume that there is no subsequence
$\hJ\subset\bN$ satisfying \eqref{eq:good subseq}.  Following the proof of
Theorem \ref{thm:convergence}, we again find that $0\in\csd f(\bx)$.
\end{proof}

\section{Concluding Remarks}

The extension of the gradient sampling algorithm to 
non-Lipschitzian, continuous, directionally Lipschitz functions
addresses the possibility of unbounded and potentially
empty Clarke subdifferentials. These possibilities effect both
the construction of the algorithm and the convergence
results. Specifically, in line (vii) of the algorithm, we require that
the direction of steepest descent be normalized to have unit magnitude
since it may happen that the sequence $\{g^k\}$ is unbounded.
Although other normalization strategies are possible, we chose a unit normalization for simplicity. 
As for the convergence results, the results differ from the Lipschitzian case only by the inclusion of outcome (c) in Theorem 
\ref{thm:convergence} as well as Corollaries \ref{cor:convergence1}
and \ref{cor:convergence2}. This outcome occurs only if
the sequence $\{g^k\}$ does not converge to zero in which case
it is shown that the sequence $\{x^k\}$ converges to a limit $\bx$. Lemma 
\ref{lem:stepsize} indicates that this can be manifested in
excessively short stepsizes. Nonetheless, in this case
failure to converge
to a Clarke stationary point only occurs when either 
$\csd f( \bx)=\emptyset$ or
$\csd f( \bx)$ is unbounded and 
\[
-\proj{\csd f(\bx)}(0)\notin\intr[(\hcsd f(\bx))^*]
=\intr{\left[ \dom{\hat d f(\bx)(\cdot)}\right]},
\] 
or equivalently, the regular subderivative $\hat d f(\bx)(\cdot)$ is not
continuous at the direction of steepest descent 
(see Lemma \ref{lem:interior equiv}).
This observation yields two open question in the directionally
Lipschitz case. First, is it possible for $\csd f( \bx)=\emptyset$, and if so, when does this occur? 
Second, is there a away to modify the search direction so that
the iterates are not attracted to non-stationary points at
which  $-\proj{\csd f(\bx)}(0)\notin \intr (\hcsd f(\bx))^*$, or is this
fundamental limitation of the method?

Finally, we note that the class of directionally Lipschitz functions
is still not sufficiently broad to capture the non-symmetric spectral functions
even though the method has successfully been applied in
this case
\cite{BurkLewiOver02b,BurkLewiOver03,BurkLewiOver05}. 
For these functions, there is still much more work to do and it is likely
that a very different approach 
to the convergence analysis is required.

\end{document}